\newtheorem{definition}{Definition}[section]
\newtheorem{lemma}[definition]{Lemma}
\newtheorem{proposition}[definition]{Proposition}
\newtheorem{corollary}[definition]{Corollary}
\newtheorem{remark}[definition]{Remark}
\newtheorem{theorem}[definition]{Theorem}
\newtheorem{example}[definition]{Example}
\def\mb{\mathbb}
\def\rawo\lonra{\longrightarrow}
\def\ot{\otimes}
\newenvironment{proof}{{\it Proof.}}{\hfill $ \square $ \vskip 4mm}
\begin{document}
\title{Pseudosymmetric braidings, twines and twisted algebras} 
\author{Florin Panaite\thanks {Research
carried out while the first author was visiting the University of Antwerp, 
supported by a postdoctoral fellowship  
offered by FWO (Flemish Scientific Research Foundation). This author was 
also partially supported by the programme CEEX of the Romanian 
Ministry of Education and
Research, contract nr. 2-CEx06-11-20/2006.}\\
Institute of Mathematics of the 
Romanian Academy\\ 
PO-Box 1-764, RO-014700 Bucharest, Romania\\
e-mail: Florin.Panaite@imar.ro
\and 
Mihai D. Staic\thanks{Permanent address: Institute of Mathematics of the  
Romanian Academy, 
PO-Box 1-764, RO-014700 Bucharest, Romania.}\\
Department of Mathematics, Indiana University,\\
Rawles Hall, Bloomington, IN 47405, USA\\
e-mail: mstaic@indiana.edu 
\and
Freddy Van Oystaeyen\\
Department of Mathematics and Computer Science\\
University of Antwerp, Middelheimlaan 1\\
B-2020 Antwerp, Belgium\\
e-mail: Francine.Schoeters@ua.ac.be}
\date{}
\maketitle
\begin{abstract}
A {\em laycle} is the categorical analogue of a lazy cocycle.    
{\em Twines} (as introduced by Brugui\`{e}res) and {\em strong twines} (as   
introduced by the authors) are laycles satisfying some extra conditions. 
If $c$ is a braiding, the double braiding $c^2$ is always a twine; we  
prove that it is a strong twine if and only if $c$ satisfies a sort of 
modified braid relation (we call such $c$  
{\it pseudosymmetric}, as any symmetric braiding satisfies this relation).  
It is known that symmetric Yetter-Drinfeld categories are   
trivial; we prove that the Yetter-Drinfeld category $_H{\cal YD}^H$ over 
a Hopf algebra $H$ is pseudosymmetric if and only if $H$ is commutative and  
cocommutative. We introduce as well the Hopf algebraic counterpart of 
pseudosymmetric braidings under the name {\it pseudotriangular structures} 
and prove that all quasitriangular structures on the $2^{n+1}$-dimensional 
pointed Hopf algebras $E(n)$ are pseudotriangular.  
We observe that a laycle on a monoidal 
category induces a so-called {\it pseudotwistor} on every algebra in the 
category, and we obtain some general results (and give some examples) 
concerning pseudotwistors, inspired by properties of laycles and twines.  
\end{abstract}
\newpage
\section*{Introduction}
${\;\;\;\;\;}$ 
The notion of {\em symmetric category} is a classical concept in category 
theory. It consists of a monoidal category  
${\cal C}$ equipped with a family of natural isomorphisms 
$c_{X, Y}:X\otimes Y\rightarrow Y\otimes X$ satisfying natural 
``bilinearity'' conditions together with the symmetry relation  
$c_{Y, X}\circ c_{X, Y}=id_{X\otimes Y}$, for all $X, Y\in {\cal C}$. 
In 1985 Joyal and Street were led by natural considerations to drop this 
symmetry condition from the axioms, thus arriving at the concept of  
{\em braiding}, which afterwards became of central importance for the   
then emerging theory of quantum groups; for instance, if $(H, R)$ is a  
quasitriangular Hopf algebra as defined by Drinfeld, then the monoidal 
category $_H{\cal M}$ of left $H$-modules acquires a braiding defined by   
$R$, which is symmetric if and only if $R$ is triangular, i.e. 
$R_{21}R=1\otimes 1$. 

There exist many examples of symmetric braidings, as well as many 
examples of braidings which are not symmetric. Although some of the most 
basic examples of monoidal categories (such as the category of 
vector spaces) are symmetric, the symmetry condition is a rather 
restrictive requirement, a claim which is probably best illustrated by the 
following result of Pareigis (cf. \cite{pareigis}): if $H$ is a 
Hopf algebra, then the Yetter-Drinfeld category $_H{\cal YD}^H$ 
is symmetric if and only if $H$ is trivial (i.e. $H=k$). Thus, the most 
basic examples of braided categories arising in Hopf algebra theory 
are virtually never symmetric. 

It appears thus natural to look for braidings satisfying some 
generalized (or weakened) symmetry conditions. In a recent paper 
\cite{etingof}, Etingof and Gelaki proposed the concept of 
{\em quasisymmetric braiding}, as being a braiding with the property that 
$c_{Y, X}\circ c_{X, Y}=id_{X\otimes Y}$ for all $X$, $Y$ {\em simple}  
objects in the category, and classified quasisymmetric braided categories 
of exponential growth, generalizing Deligne's classification of 
symmetric categories of exponential growth. On the other hand, at the 
Hopf algebraic level, Liu and Zhu proposed in \cite{liuzhu} the concept 
of {\em almost-triangular} Hopf algebra, as being a quasitriangular 
Hopf algebra $(H, R)$ such that $R_{21}R$ is central in $H\otimes H$ 
(obviously, this concept generalizes the one of triangular Hopf algebra, but  
it is not clear whether it has a categorical counterpart).

The original aim of the present paper was to continue the study of some 
categorical concepts recently introduced in \cite{doru}, 
\cite{brug}, \cite{psvo} under the names {\em pure-braided structure}, 
{\em twine} and {\em strong twine}. We recall from \cite{brug} that a 
twine on a monoidal category ${\cal C}$ is a family of natural isomorphisms 
$D_{X, Y}:X\otimes Y\rightarrow X\otimes Y$ in ${\cal C}$ satisfying 
a certain list of axioms chosen in such a way that, if $c$ is a 
braiding on ${\cal C}$, then the so-called {\em double braiding} 
$c^2$ defined by $c^2_{X, Y}=c_{Y, X}\circ c_{X, Y}$ is a twine (by 
\cite{psvo}, the concept of twine is equivalent to the concept of 
pure-braided structure introduced in \cite{doru}). Moreover, twines are 
related to the pure braid groups in the same way in which braidings are 
related to the braid groups. A strong twine, as defined in \cite{psvo}, 
is also a family of natural isomorphisms 
$D_{X, Y}:X\otimes Y\rightarrow X\otimes Y$ in ${\cal C}$ satisfying a 
list of (easier looking) axioms, which imply the axioms of a twine. 
A double braiding $c^2$ is {\em not} always a strong twine, so we were 
led naturally to ask for what kind of braidings $c$ is $c^2$ a 
strong twine. The answer is that this happens if and only if  
$c$ satisfies the following condition:        
\begin{eqnarray*}
&&(c_{Y, Z}\otimes id_X)\circ (id_Y\otimes c_{Z, X}^{-1})\circ   
(c_{X, Y}\otimes id_Z)
=(id_Z\otimes c_{X, Y})\circ (c_{Z, X}^{-1}\otimes id_Y)
\circ (id_X\otimes c_{Y, Z}) 
\end{eqnarray*}
for all $X, Y, Z\in {\cal C}$. This is a sort of modified braid relation, 
and it is obvious that if $c$ is a symmetry then this condition becomes 
exactly the braid relation satisfied by any braiding; thus, any symmetric 
braiding satisfies the above relation, so what we obtained is a  
generalized symmetry condition. A braiding satisfying the above 
modified braid relation will be called {\em pseudosymmetric}. 
It should be emphasized that, although we arrived at this concept in an 
indirect way (via double braidings and strong twines), the pseudosymmetry 
relation does not depend on these concepts and could have  
been introduced directly. Anyway, this concept is supported and  
further justified by our main result: if $H$ is a Hopf algebra (with 
bijective antipode) then the canonical braiding of the Yetter-Drinfeld 
category $_H{\cal YD}^H$ is pseudosymmetric if and only if $H$ is 
commutative and cocommutative. In view of Pareigis' result mentioned above, 
this shows that pseudosymmetries are far more numerous than symmetries; 
and in the opposite direction, it shows that {\em not} every braiding is 
pseudosymmetric (this was not so obvious a priori). 
Note also that, incidentally, our theorem provides a  
characterization of commutative and cocommutative Hopf algebras 
solely in terms of their Yetter-Drinfeld categories. 

We introduce the Hopf algebraic counterpart of pseudosymmetric  
braidings, under the name {\em pseudotriangular structure}, as being a 
quasitriangular structure $R$ on a Hopf algebra $H$ satisfying the 
modified quantum Yang-Baxter equation $R_{12}R_{31}^{-1}R_{23}=
R_{23}R_{31}^{-1}R_{12}$ (from which it is visible that triangular 
implies pseudotriangular) or equivalently the element $F=R_{21}R$ 
satisfies the condition $F_{12}F_{23}=F_{23}F_{12}$, which shows 
immediately that almost-triangular implies pseudotriangular. We analyze 
in detail a class of quasitriangular Hopf algebras, namely the 
$2^{n+1}$-dimensional pointed Hopf algebras $E(n)$ whose 
quasitriangular structures and cleft extensions have been classified in 
\cite{panvo} and \cite{panvan}: we prove that all quasitriangular 
structures of $E(n)$ (which are in bijection with $n\times n$ matrices) 
are pseudotriangular, and the only almost-triangular structures of 
$E(n)$ are the triangular ones (which are in bijection with symmetric 
$n\times n$ matrices); in particular, this shows that pseudotriangular  
does {\em not} imply almost-triangular.

Apart from leading us to consider a certain class of braidings (the 
pseudosymmetric ones), the study of twines led us also to  
consider certain classes of {\em pseudotwistors}, as introduced in 
\cite{lpvo}. In order to explain this, we need to introduce first some 
terminology. A basic object we use all over the paper is a monoidal 
structure of the identity functor on a monoidal category (for instance, 
this is part of the axioms for twines and strong twines). We needed to 
have a name for such an object, and in order to choose it we relied 
on the fact that these objects are the categorical analogues of 
{\em lazy cocycles}, a concept recently introduced in Hopf algebra  
theory and studied in a series of papers (\cite{bichon}, \cite{c}, 
\cite{cc}, \cite{chen}, \cite{cp}, \cite{sch}). Thus, we have chosen 
the name {\bf laycle}, as derived from {\bf la}zy coc{\bf ycle}. These 
laycles have some properties similar to those of lazy cocycles, for 
instance they act by conjugation on braidings and it is possible to 
define for them an analogue of the Hopf lazy cohomology.             

The concept of pseudotwistor (with particular cases called {\em twistor} 
and {\em braided twistor}) was introduced in \cite{lpvo} as an  
abstract and axiomatic device for ``twisting'' the multiplication of 
an algebra in a monoidal category in order to obtain a new algebra structure 
(on the same object). More precisely, if $(A, \mu , u)$ is an  
algebra in a monoidal category ${\cal C}$, a pseudotwistor for $A$ is a 
morphism $T:A\otimes A\rightarrow A\otimes A$ in ${\cal C}$, for which 
there exist two morphisms $\tilde{T}_1, \tilde{T}_2:A\otimes A\otimes A
\rightarrow A\otimes A\otimes A$ in ${\cal C}$, called the companions of $T$, 
satisfying a list of axioms ensuring that $(A, \mu \circ T, u)$ is 
also an algebra in ${\cal C}$. Examples of pseudotwistors are abundant, 
cf. \cite{lpvo}. For instance, if $c$ is a braiding on ${\cal C}$, 
then $c^2_{A, A}$ is a pseudotwistor for every algebra $A$ in ${\cal C}$. 
Since a double braiding is in particular a twine, this raises the 
natural question whether any twine induces a pseudotwistor on every 
algebra in the category. It turns out that something more general holds, 
namely that any laycle has this property. 
This seems to show that pseudotwistors are 
``local'' versions of laycles (in the same sense in which twisting maps 
are ``local'' versions of braidings, see \cite{jlpvo} for the meaning 
of these concepts and references), but this is not quite true,  
because for instance a composition of laycles is a laycle while a 
composition of pseudotwistors is not in general a pseudotwistor. We 
introduce thus the concept of {\em strong pseudotwistor}, as a better 
candidate for being a local version of laycles (for instance, 
a composition of a strong pseudotwistor with itself is again a 
strong pseudotwistor). We also introduce a sort of local  
version of twines, under the name {\em pure pseudotwistor}, as being 
a pseudotwistor whose companions satisfy the condition 
$(\tilde{T}_2\otimes id)\circ (id\otimes \tilde{T}_1)=(id \otimes 
\tilde{T}_1)\circ (\tilde{T}_2\otimes id)$. Quite interestingly, it turns 
out that virtually all the concrete examples of pseudotwistors 
we are aware of are pure.

What we discussed above are basically facts about pseudotwistors inspired 
by properties of laycles and twines. In the last section of the paper 
we complete the picture of the interplay between laycles and twines, 
on the one hand, and pseudotwistors, on the other hand, by presenting 
a result in the opposite direction. Namely, inspired by a result in 
\cite{lpvo} concerning pseudotwistors and twisting maps, we prove that, 
if ${\cal C}$ is a monoidal category, $T$ a laycle and $d$ a braiding 
on ${\cal C}$ related in a certain way, then the families 
$d'_{X, Y}=d_{X, Y}\circ T_{X, Y}$ and $d''_{X, Y}=T_{Y, X}\circ d_{X, Y}$ 
are also braidings on ${\cal C}$. We prove also a sort of converse result, 
leading thus to a characterization of generalized double braidings 
(i.e. twines of the type $c'_{Y, X}\circ c_{X, Y}$, with $c$, $c'$ 
braidings).           
\section{Preliminaries}\label{sec1}
\setcounter{equation}{0}
${\;\;\;\;\;}$
In this section we recall basic definitions and results and we fix   
notation to be used throughout the paper. 
All algebras, linear 
spaces, etc, will be over a base field $k$; unadorned $\ot$ means $\ot _k$. 
All monoidal categories are assumed to be strict, with unit denoted by $I$.  
For a Hopf algebra $H$ with comultiplication $\Delta$ we  
denote $\Delta (h)=h_1\ot h_2$, for all $h\in H$. 
Unless otherwise stated, $H$ will denote a Hopf algebra with
bijective antipode $S$. For terminology concerning Hopf algebras and  
monoidal categories we refer to \cite{k}, \cite{m}. 

A linear map $\sigma :H\ot H\rightarrow k$ is called a {\bf left 
2-cocycle} if it satisfies the condition
\begin{eqnarray}
&&\sigma (a_1, b_1)\sigma (a_2b_2, c)=\sigma (b_1, c_1)\sigma (a, 
b_2c_2), \label{leftco}
\end{eqnarray}
for all $a, b, c\in H$, and it is called a {\bf right  
2-cocycle} if it satisfies the condition 
\begin{eqnarray}
&&\sigma (a_1b_1, 
c)\sigma (a_2, b_2)=\sigma (a, b_1c_1)\sigma (b_2, c_2). \label{rightco}
\end{eqnarray}

Given a linear map $\sigma :H\ot H\rightarrow k$, define a product 
$\cdot_{\sigma }$ on $H$ by 
$h\cdot _{\sigma }h'=\sigma (h_1, h'_1)h_2h'_2$, for all $h, h'\in H$. 
Then $\cdot_{\sigma }$ is associative if and only if $\sigma $ is
a left 2-cocycle. If we define $\cdot _{\sigma }$ by 
$h\cdot _{\sigma }h'=h_1h'_1\sigma (h_2, h'_2)$, 
then $\cdot _{\sigma }$ is associative if and only if $\sigma $
is a right 2-cocycle. In any of the two cases, $\sigma$ is
normalized (i.e. $\sigma (1, h)= \sigma (h, 1)=\varepsilon (h)$
for all $h\in H$) if and only if $1_H$ is the unit for $\cdot
_{\sigma }$.  If $\sigma $ is a normalized left (respectively
right) 2-cocycle, we denote the algebra $(H, \cdot _{\sigma })$
by $_{\sigma }H$ (respectively $H_{\sigma }$). It is well-known
that $_{\sigma }H$ (respectively $H_{\sigma }$) is a right
(respectively left) $H$-comodule algebra via the comultiplication
$\Delta $ of $H$. If $\sigma :H\ot H\rightarrow k$ is normalized
and convolution invertible, then $\sigma $ is a left 2-cocycle if
and only if $\sigma ^{-1}$ is a right 2-cocycle. 

If $\gamma :H\rightarrow k$ is linear, normalized (i.e. $\gamma
(1)=1$) and convolution invertible, define
\begin{eqnarray*}
&&D^1(\gamma ):H\ot H\rightarrow k, \;\;\;
D^1(\gamma )(h, h')=\gamma (h_1)\gamma (h'_1)\gamma ^{-1}(h_2h'_2),
\;\;\;\;\;\forall \;h, h'\in H.
\end{eqnarray*}
Then $D^1(\gamma )$ is a normalized and convolution invertible
left 2-cocycle.

We recall from \cite{bichon} some facts about lazy cocycles and  
lazy cohomology. The set $Reg^1 (H)$ (respectively $Reg^2 (H)$)
consisting of normalized and convolution invertible linear maps
$\gamma :H\rightarrow k$ (respectively $\sigma :H\ot H\rightarrow
k$), is a group with respect to the convolution product. 
An element $\gamma 
\in Reg^1 (H)$ is called {\bf lazy} if 
$\gamma (h_1)h_2=h_1\gamma (h_2)$, for all $h\in H$.  
The set of lazy elements of $Reg^1 (H)$, denoted by $Reg^1_L (H)$,
is a central subgroup of $Reg^1 (H)$. An element $\sigma \in
Reg^2 (H)$ is called {\bf lazy} if
\begin{eqnarray}
&&\sigma (h_1, h'_1)h_2h'_2=h_1h'_1\sigma (h_2, h'_2),\;\;\;\;\;
\forall \;h, h'\in H. \label{lazy2}
\end{eqnarray}
The set of lazy elements of $Reg^2 (H)$, denoted by $Reg^2_L (H)$,
is a subgroup of $Reg^2 (H)$. We denote by $Z^2 (H)$ the set of
left 2-cocycles on $H$ and by $Z^2_L (H)$ the set $Z^2 (H)\cap
Reg^2_L (H)$ of normalized and convolution invertible lazy
2-cocycles. If $\sigma \in Z^2_L(H)$, then the algebras $_{\sigma
}H$ and $H_{\sigma }$ coincide and will be denoted by $H(\sigma
)$; moreover, $H(\sigma )$ is an $H$-bicomodule algebra via
$\Delta $. 

It is well-known that in general the set $Z^2 (H)$ of left
2-cocycles is not closed under convolution. One of the main
features of lazy 2-cocycles is that the set $Z^2_L (H)$ is closed
under convolution, and that the convolution inverse of an element
$\sigma \in Z^2_L (H)$ is again a lazy 2-cocycle, so $Z^2_L (H)$
is a group under convolution. In particular, a lazy 2-cocycle is
also a right 2-cocycle.
 
Consider now the map 
$D^1:Reg^1 (H)\rightarrow Reg^2 (H)$, $D^1(\gamma )(h, h')=\gamma  
(h_1)\gamma (h'_1)\gamma ^{-1}(h_2h'_2)$, 
for all $h, h'\in H$. 
Then, by \cite{bichon}, the map $D^1$ induces a group morphism 
$Reg^1_L (H)\rightarrow Z^2_L (H)$, with image contained in 
the centre of $Z^2_L (H)$; denote by $B^2_L (H)$ this central
subgroup $D^1(Reg^1_L (H))$ of $Z^2_L (H)$ (its elements are
called {\bf lazy 2-coboundaries}). Then define the {\bf second  
lazy cohomology group} $H^2_L (H)=Z^2_L (H)/B^2_L (H)$.

Dually, an invertible element $T\in H\otimes H$ is called a {\bf lazy  
twist} if
\begin{eqnarray*}
&&(\varepsilon \otimes id)(T)=1=(id\otimes \varepsilon)(T), 
\label{t1} \\
&&(id\otimes \Delta)(T)(1\otimes T)=(\Delta \otimes id)(T)(T\otimes 1), 
\label{t2} \\
&&\Delta (h) T=T \Delta(h),  \;\;\;\; \forall \; h\in H.
\label{t5}
\end{eqnarray*}
As a consequence of these axioms we also have 
$(1\otimes T)(id\otimes \Delta)(T)=(T\otimes 1)(\Delta \otimes id)(T)$. 
One can define the analogues of $Z^2_L(H)$, $B^2_L(H)$ and 
$H^2_L(H)$ with lazy twists instead of lazy cocycles; these will be denoted 
respectively by $Z^2_{LT}(H)$, $B^2_{LT}(H)$ and $H^2_{LT}(H)$.  
\begin{remark}
If ${\cal C}$ is a monoidal category and $T_{X, Y}:X\ot Y\rightarrow X\ot Y$  
is a family of natural isomorphisms in ${\cal C}$, the naturality of 
$T$ implies (for all $X, Y, Z\in {\cal C}$): 
\begin{eqnarray}
&&(T_{X, Y}\ot id_Z)\circ T_{X\ot Y, Z}=T_{X\ot Y, Z}\circ 
(T_{X, Y}\ot id_Z),  \label{lac1} \\
&&(id_X\ot T_{Y, Z})\circ T_{X, Y\ot Z}=T_{X, Y\ot Z}\circ 
(id_X\ot T_{Y, Z}).  \label{lac2}
\end{eqnarray}
\end{remark}
\begin{definition} (\cite{k})
Let ${\cal C}=({\cal C}, \otimes , I)$ and 
${\cal D}=({\cal D}, \otimes , I)$ be monoidal categories. A 
{\bf monoidal functor} from ${\cal C}$ to ${\cal D}$ is a triple 
$(F, \varphi _0, \varphi _2)$ where $F:{\cal C}\rightarrow {\cal D}$ is a 
functor, $\varphi _0$ is an isomorphism in ${\cal D}$ from $I$ to 
$F(I)$ and $\varphi _2(U, V):F(U)\otimes F(V)\rightarrow F(U\otimes V)$ is 
a family of natural isomorphisms in ${\cal D}$ indexed by all couples 
$(U, V)$ of objects in ${\cal C}$ such that, for all $U, V, W\in {\cal C}$: 
\begin{eqnarray*}
\varphi _2(U\otimes V, W)\circ (\varphi _2(U, V)\otimes id_{F(W)})&=&
\varphi _2(U, V\otimes W)\circ (id_{F(U)}\otimes \varphi _2(V, W)), \\
\varphi _2(I, U)\circ (\varphi _0\otimes id_{F(U)})&=&id_{F(U)}, \\
\varphi _2(U, I)\circ (id_{F(U)}\otimes \varphi _0)&=&id_{F(U)}.
\end{eqnarray*}
\end{definition}
\begin{definition} (\cite{joyalstreet}) 
Let ${\cal C}$ be a monoidal category. A {\bf braiding} on ${\cal C}$  
consists of a family of natural isomorphisms $c_{X, Y}:X\otimes Y
\rightarrow Y\otimes X$ in ${\cal C}$ such that, 
for all $X, Y, Z\in {\cal C}$:
\begin{eqnarray}
&&c_{X, Y\otimes Z}=(id_Y\otimes c_{X, Z})\circ (c_{X, Y}\otimes id_Z), 
\label{braid1} \\
&&c_{X\otimes Y, Z}=(c_{X, Z}\otimes id_Y)\circ (id_X\otimes c_{Y, Z}).
\label{braid2}
\end{eqnarray}
As consequences of the axioms we also have $c_{X, I}=c_{I, X}=id_X$ and 
the braid relation 
\begin{multline}
\;\;\;\;\;\;\;\;\;\;\;\;\;\;\;\;\;\;\;
(c_{Y, Z}\otimes id_X)\circ (id_Y\otimes c_{X, Z})\circ 
(c_{X, Y}\otimes id_Z)\\
=(id_Z\otimes c_{X, Y})\circ (c_{X, Z}\otimes id_Y)
\circ (id_X\otimes c_{Y, Z}). \label{braideq}
\end{multline}
If moreover $c$ satisfies $c_{Y, X}\circ c_{X, Y}=id_{X\otimes Y}$, 
for all $X, Y\in {\cal C}$, then $c$ is called a {\bf symmetry}.  
\end{definition} 
\begin{definition} (\cite{drinfeld}) Let $H$ be a Hopf algebra. An invertible 
element $R\in H\otimes H$ is called a {\bf quasitriangular structure} 
for $H$ if 
\begin{eqnarray*}
&&(\Delta \otimes id)(R)=R_{13}R_{23}, \\
&&(id\otimes \Delta )(R)=R_{13}R_{12}, \\
&&(\varepsilon \otimes id)(R)=(id\otimes \varepsilon )(R)=1, \\
&&\Delta ^{cop}(h)R=R\Delta (h), \;\;\;\forall \;h\in H. 
\end{eqnarray*}
If moreover $R$ satisfies $R_{21}R=1\otimes 1$, then $R$ is called 
{\bf triangular}. If $R$ is a quasitriangular (respectively triangular) 
structure for $H$, then the monoidal category $_H{\cal M}$ of left 
$H$-modules becomes braided (respectively symmetric), with braiding 
given by $c_{M, N}:M\otimes N\rightarrow N\otimes M$, 
$c_{M, N}(m\otimes n)=R^2\cdot n\otimes R^1\cdot m$, for all 
$M, N\in $$\;_H{\cal M}$, $m\in M$, $n\in N$. 
\end{definition}
\begin{definition} (\cite{doru}) \label{purebraided}
Let ${\cal C}$  be a monoidal category. 
A {\bf pure-braided} structure on ${\cal C}$ consists of  
two families of natural   
isomorphisms $A_{U,V,W}:U\otimes V\otimes W \to U\otimes V\otimes W$ 
and $B_{U,V,W}:U\otimes V\otimes W \to 
U\otimes V\otimes W$ in ${\cal C}$ such that (for all 
$U, V, W, X\in {\cal C}$):
\begin{eqnarray}
&&A_{U\otimes V,W,X}=A_{U,V\otimes W,X}\circ (id_U\otimes A_{V,W,X}),  
\label{a1} \\
&&A_{U,V,W\otimes X}=(A_{U,V,W}\otimes id_X)\circ A_{U,V\otimes W,X}, 
\label{a2} \\
&&B_{U\otimes V,W,X}=(id_U \otimes B_{V,W,X})\circ B_{U,V\otimes W,X}, 
\label{baba} \\
&&B_{U,V,W\otimes X}=B_{U,V\otimes W,X}\circ (B_{U,V,W}\otimes id_X),  
\label{b2} \\
&&(A_{U,V,W}\otimes id_X)\circ (id_U \otimes B_{V,W,X})=
(id_U \otimes B_{V,W,X})\circ (A_{U,V,W}\otimes id_X), 
\label{cab} \\
&&A_{U,I,V}=B_{U,I,V}.  
\label{t1t} 
\end{eqnarray}
A category equipped with a pure-braided structure is called a pure-braided  
category. 
\end{definition}
\begin{definition} (\cite{brug}) Let ${\cal C}$ be  a monoidal category.  
A {\bf twine} on ${\cal C}$ is a family of natural isomorphisms   
$D_{X,Y}:X\otimes Y\to X\otimes Y$ in ${\cal C}$  
satisfying the axioms (for all $X, Y, Z, W\in {\cal C}$):
\begin{eqnarray}
D_{I,I}&=&id_I, \label{twine1}\\
(D_{X,Y}\otimes id_Z)\circ D_{X\otimes Y, Z}&=&
(id_X \otimes D_{Y,Z})\circ D_{X,Y\otimes Z}, 
\label{twine2}
\end{eqnarray}
\begin{multline}
\;\;\;\;\;\;\;\;(D_{X\otimes Y,Z}\otimes id_W)\circ 
(id_X\otimes D_{Y,Z}^{-1}\otimes id_W)\circ (id_X\otimes D_{Y,Z\otimes W})\\
=(id_X\otimes D_{Y,Z\otimes W})\circ  
(id_X\otimes D_{Y,Z}^{-1}\otimes id_W)\circ (D_{X\otimes Y,Z}\otimes id_W). 
\label{twine3}
\end{multline}
A category equipped with a twine is called an entwined category. 
If $({\cal C}, D)$ is entwined then we also have    
$D_{X,I}=D_{I,X}=id_X$, for all $X\in {\cal C}$. 
\end{definition}

By \cite{psvo}, these two concepts are equivalent in a certain 
(precise) sense.
\begin{proposition} (\cite{brug}) 
Let ${\cal C}$ be a monoidal category and $c$, $c'$ braidings on 
${\cal C}$. Then the family $T_{X, Y}:=c'_{Y, X}\circ c_{X, Y}$ is a twine, 
called a {\bf generalized double braiding}; if $c=c'$ the family 
$c_{Y, X}\circ c_{X, Y}$ is called a {\bf double braiding}.
\end{proposition}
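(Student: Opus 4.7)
The plan is to verify that the family $T_{X,Y} := c'_{Y,X} \circ c_{X,Y}$ satisfies the defining axioms of a twine. Naturality of each $T_{X,Y}$ and its invertibility (with $T_{X,Y}^{-1} = c_{X,Y}^{-1} \circ (c'_{Y,X})^{-1}$) are inherited from $c$ and $c'$. Axiom \eqref{twine1} is immediate: the standard consequence $c_{I,I} = c'_{I,I} = id_I$ of the hexagon axioms gives $T_{I,I} = id_I$.

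For \eqref{twine2}, the key step is to expand $T_{X \otimes Y, Z}$ and $T_{X, Y \otimes Z}$ via the hexagon identities \eqref{braid1} and \eqref{braid2}, applied to both $c$ and $c'$. Applying \eqref{braid1} to $c'_{Z, X \otimes Y}$ and \eqref{braid2} to $c_{X \otimes Y, Z}$ yields
\[
T_{X \otimes Y, Z} = (id_X \otimes c'_{Z,Y}) \circ (T_{X,Z} \otimes id_Y) \circ (id_X \otimes c_{Y,Z}),
\]
while the symmetric choice of hexagons gives
\[
T_{X, Y \otimes Z} = (c'_{Y,X} \otimes id_Z) \circ (id_Y \otimes T_{X,Z}) \circ (c_{X,Y} \otimes id_Z).
\]
Substituting into both sides of \eqref{twine2} and unfolding the remaining $T_{X,Y}$, $T_{Y,Z}$ into their elementary braiding factors, both sides become six-fold composites of the maps $c_{?,?}, c'_{?,?}$ acting on adjacent pairs of strands, which can then be matched using bifunctoriality of $\otimes$ and naturality of $c$ and $c'$.

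For \eqref{twine3} the same strategy applies, but on four objects. After expanding $T_{X \otimes Y, Z}$, $T_{Y, Z \otimes W}$, and $T_{Y,Z}^{-1}$ via the hexagon identities and the inverse formula above, both sides of \eqref{twine3} become long compositions of elementary braidings acting on adjacent pairs; the desired equality then boils down to the braid equation for $c$, its analogue for $c'$, and naturality. A convenient way to organize the argument is via string diagrams, where the equality is visible as an isotopy of planar diagrams whose elementary crossings come from $c$ and $c'$.

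The main obstacle is bookkeeping: the computation for \eqref{twine3} is unavoidably long, but each individual step is dictated by a hexagon identity, by naturality, or by the braid relation, with no new ingredient required. Conceptually, $T_{X,Y}$ is the image of a pure braid word (one $c$-crossing followed by one $c'$-crossing) in the braid-group representation attached to the pair $(c, c')$, and the twine axioms \eqref{twine2}--\eqref{twine3} are precisely the identities that such pure braid images must satisfy; this is the viewpoint Brugui\`{e}res emphasizes when he says that twines stand to pure braid groups as braidings stand to braid groups.
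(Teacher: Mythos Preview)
Your direct-verification approach is correct in outline, and your expansion formulas for $T_{X\otimes Y,Z}$ and $T_{X,Y\otimes Z}$ are right. The paper, however, does not prove this proposition where it is stated (it is cited from \cite{brug}); its own argument appears only in the final section, and proceeds quite differently. There, axiom \eqref{twine2} is obtained conceptually: since any braiding is a quasi-braiding, Proposition~\ref{compmon} shows that $T_{X,Y}=c'_{Y,X}\circ c_{X,Y}$ is automatically a laycle (being the composite of two monoidal structures on the identity functor), so \eqref{twine1} and \eqref{twine2} come for free without any six-fold bookkeeping. For \eqref{twine3}, the paper does not attack the four-object identity head-on either; instead, Theorem~\ref{analog} proves that any laycle $T$ satisfying the compatibility conditions \eqref{comp1}--\eqref{comp2} with some braiding $d$ is a twine, and Proposition~\ref{consteor} checks these conditions for $T$ with $d_{X,Y}=c_{Y,X}^{-1}$, a computation on three objects that uses only the braid relation for $c$ and naturality. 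Your route is closer in spirit to Brugui\`{e}res' original and has the virtue of being self-contained, at the cost of the long \eqref{twine3} verification you acknowledge; the paper's route buys a shorter proof of \eqref{twine3} by factoring through the intermediate compatibility conditions, and as a by-product yields the characterization of generalized double braidings in the last section.
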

\begin{definition} (\cite{psvo}) Let ${\cal C}$ be a monoidal category  
and $T_{X, Y}:X\otimes Y\to X\otimes Y$ a family of natural isomorphisms  
in ${\cal C}$.   
We say that $T$ is a {\bf strong twine} (or $({\cal C}, T)$ is  
strongly entwined) if for all $X, Y, Z\in {\cal C}$ we have:  
\begin{eqnarray}
&&T_{I,I}=id_I, \label{str1} \\
&&(T_{X, Y}\otimes id_Z)\circ T_{X\otimes Y, Z}=
(id_X\otimes T_{Y, Z})\circ T_{X, Y\otimes Z}, \label{str2} \\
&&(T_{X, Y}\otimes id_Z)\circ (id_X\otimes T_{Y, Z})=
(id_X\otimes T_{Y, Z})\circ (T_{X, Y}\otimes id_Z). \label{str3}
\end{eqnarray}
\end{definition}
\begin{proposition}\label{lilu} (\cite{psvo}) 
If $({\cal C}, T)$ is strongly entwined then   
$({\cal C}, T)$ is entwined.
\end{proposition}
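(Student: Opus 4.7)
Only (twine3) needs verification, since (twine1) coincides directly with (str1) and (twine2) coincides directly with (str2). My approach is to rewrite both sides of (twine3) using (str2) to expand the $T$'s applied to iterated tensor products, and then commute the resulting pieces past one another using (str3), the naturality identities (lac1) and (lac2) from the Remark, and the interchange law $(f\otimes id)\circ(id\otimes g)=(id\otimes g)\circ(f\otimes id)$ for morphisms supported on disjoint tensor factors.

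The first step is an auxiliary commutativity: $T_{Y,Z\otimes W}$ commutes with $T_{Y,Z}\otimes id_W$. Indeed, (str2) for $(Y,Z,W)$ gives $T_{Y,Z\otimes W}=(id_Y\otimes T_{Z,W})^{-1}\circ(T_{Y,Z}\otimes id_W)\circ T_{Y\otimes Z,W}$; here $T_{Y,Z}\otimes id_W$ passes $(id_Y\otimes T_{Z,W})^{-1}$ by (str3) and passes $T_{Y\otimes Z,W}$ by (lac1), yielding the claim. Tensoring with $id_X$ and taking inverses, this shows that the middle two factors $id_X\otimes T_{Y,Z}^{-1}\otimes id_W$ and $id_X\otimes T_{Y,Z\otimes W}$ in (twine3) commute; moreover, the same (str2) identity shows that their common product equals $(id_{X\otimes Y}\otimes T_{Z,W})^{-1}\circ(id_X\otimes T_{Y\otimes Z,W})$.

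Substituting this simplification into (twine3) and applying (str3) for $(X\otimes Y,Z,W)$, which makes $T_{X\otimes Y,Z}\otimes id_W$ commute with $id_{X\otimes Y}\otimes T_{Z,W}$ and with its inverse, reduces (twine3) to the single commutation relation $(T_{X\otimes Y,Z}\otimes id_W)\circ(id_X\otimes T_{Y\otimes Z,W})=(id_X\otimes T_{Y\otimes Z,W})\circ(T_{X\otimes Y,Z}\otimes id_W)$. To establish this, expand $T_{X\otimes Y,Z}$ via (str2) for $(X,Y,Z)$ and $T_{Y\otimes Z,W}$ via (str2) for $(Y,Z,W)$, substitute, and then repeatedly apply (str3), (lac1), (lac2) and the interchange law to cancel the auxiliary factors $T_{X,Y}\otimes id_{Z\otimes W}$ and $id_X\otimes T_{Y,Z}\otimes id_W$; both sides then reduce to a common expression, finishing the proof.

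The main obstacle is this last step, since $T_{X\otimes Y,Z}\otimes id_W$ and $id_X\otimes T_{Y\otimes Z,W}$ share the tensor factors $Y$ and $Z$, so their commutation is not available from interchange; it requires the full strength of the auxiliary commutativity above combined with several applications of (str3) in different orientations. However, the substitution strategy is systematic and the resulting calculation, though intricate, is routine bookkeeping.
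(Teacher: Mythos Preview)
Your argument is correct. Note that the paper does not actually prove this proposition; it simply cites \cite{psvo}, so there is no ``paper's own proof'' to compare against. Your reduction of (\ref{twine3}) to the commutation of $T_{X\otimes Y,Z}\otimes id_W$ with $id_X\otimes T_{Y\otimes Z,W}$ is clean and accurate: the auxiliary lemma that $T_{Y,Z\otimes W}$ commutes with $T_{Y,Z}\otimes id_W$ is exactly right, the simplification of the middle product via (\ref{str2}) is correct, and the use of (\ref{str3}) for $(X\otimes Y,Z,W)$ to peel off $(id_{X\otimes Y}\otimes T_{Z,W})^{-1}$ works as stated.

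The only place that reads as a sketch rather than a proof is the final paragraph. It would be worth making explicit that, after writing $T_{X\otimes Y,Z}\otimes id_W=(T_{X,Y}^{-1}\otimes id_{Z\otimes W})\circ(id_X\otimes T_{Y,Z}\otimes id_W)\circ(T_{X,Y\otimes Z}\otimes id_W)$, each of the three factors commutes with $id_X\otimes T_{Y\otimes Z,W}$ individually: the third factor by (\ref{str3}) for $(X,Y\otimes Z,W)$, the middle factor by (\ref{lac1}) for $(Y,Z,W)$ tensored with $id_X$, and the first factor by expanding $T_{Y\otimes Z,W}$ via (\ref{str2}) for $(Y,Z,W)$ and checking against each resulting piece using (\ref{str3}), interchange, and (\ref{str3}) for $(X,Y,Z\otimes W)$. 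This is precisely the ``routine bookkeeping'' you allude to, and it goes through without surprises; spelling it out would remove any residual vagueness.
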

\begin{proposition} (\cite{borcherds1}, \cite{borcherds2}) \label{borch} 
Let $A$ be an algebra with multiplication denoted by   
$\mu _A=\mu $ and let $T:A\otimes A\rightarrow A\otimes A$ be a linear map  
satisfying the   
following conditions: $T(1\otimes a)=1\otimes a$, $T(a\otimes 1)=
a\otimes 1$, for all $a\in A$, and 
\begin{eqnarray}
&&\mu _{23}\circ T_{12}\circ T_{13}=
T\circ \mu _{23}:A\otimes A\otimes A\rightarrow A\otimes A, \label{rmat1}\\
&&\mu _{12}\circ T_{23}\circ T_{13}=
T\circ \mu _{12}:A\otimes A\otimes A\rightarrow A\otimes A, \label{rmat2}\\
&&T_{12}\circ T_{13}\circ T_{23}=
T_{23}\circ T_{13}\circ T_{12}:A\otimes A\otimes A\rightarrow 
A\otimes A\otimes A, 
\label{rmat3} 
\end{eqnarray}
with standard notation for $\mu _{ij}$ and $T_{ij}$. Then the 
map $\mu \circ T:A\otimes A\rightarrow A$ defines an associative    
algebra   
structure on $A$, with the same unit 1. The map $T$ is called an 
{\bf $R$-matrix} for $A$.  
\end{proposition}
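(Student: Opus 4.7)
The plan is to verify the two axioms of an associative unital algebra for the candidate product $*:=\mu\circ T$, using the hypotheses one at a time.

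For the unit, observe that the two conditions $T(1\otimes a)=1\otimes a$ and $T(a\otimes 1)=a\otimes 1$ immediately give $1*a=\mu(1\otimes a)=a$ and $a*1=\mu(a\otimes 1)=a$, so $1$ remains a two-sided unit.

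For associativity, the strategy is to expand $(a*b)*c$ and $a*(b*c)$ as compositions of $T_{ij}$'s and $\mu_{ij}$'s applied to $a\otimes b\otimes c$, and then recognize both sides as the same expression. More precisely:
\begin{eqnarray*}
(a*b)*c&=&\mu\circ T\circ(\mu\otimes id)\circ(T\otimes id)(a\otimes b\otimes c)\\
&=&\mu\circ(\mu\otimes id)\circ T_{23}\circ T_{13}\circ T_{12}(a\otimes b\otimes c),
\end{eqnarray*}
where the second equality uses (\ref{eq:rmat2}) to push $T$ past $\mu\otimes id$. Similarly,
\begin{eqnarray*}
a*(b*c)&=&\mu\circ T\circ(id\otimes\mu)\circ(id\otimes T)(a\otimes b\otimes c)\\
&=&\mu\circ(id\otimes\mu)\circ T_{12}\circ T_{13}\circ T_{23}(a\otimes b\otimes c),
\end{eqnarray*}
this time using (\ref{eq:rmat1}) to push $T$ past $id\otimes \mu$. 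Since $\mu$ is associative, $\mu\circ(\mu\otimes id)=\mu\circ(id\otimes\mu)$, so both expressions differ only by replacing $T_{23}T_{13}T_{12}$ with $T_{12}T_{13}T_{23}$, and these agree by the Yang--Baxter equation (\ref{eq:rmat3}). Hence $(a*b)*c=a*(b*c)$.

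The main (and only) subtlety is bookkeeping of the leg-indices on $A^{\otimes 3}$; once the compositions are parsed carefully, no further obstacle arises, and the three hypotheses (\ref{eq:rmat1})--(\ref{eq:rmat3}) are used exactly once each, in conjunction with the associativity of $\mu$.
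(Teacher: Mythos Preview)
Your proof is correct. The paper itself does not prove this proposition---it is recalled from Borcherds as background---but later in the paper the authors prove a categorical generalization (their Proposition on $R$-matrices in a monoidal category with companions $\overline{T}_1$, $\overline{T}_2$), and their argument there is exactly the one you give: check the unit trivially, then expand $(\mu\circ T)\circ((\mu\circ T)\otimes id)$ using (\ref{rmat2}), apply (\ref{rmat3}), use associativity of $\mu$, and collapse via (\ref{rmat1}) to obtain $(\mu\circ T)\circ(id\otimes(\mu\circ T))$.
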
 
\section{Laycles and quasi-braidings} \label{seclaycles}
\setcounter{equation}{0}
\begin{definition}
Let ${\cal C}$ be  a monoidal category and 
$T_{X,Y}:X\otimes Y\to X\otimes Y$ a family of natural isomorphisms in 
${\cal C}$. We say that $T$ is a {\bf laycle} if for all  
$X, Y, Z\in {\cal C}$ we have:
\begin{eqnarray}
&&T_{I,I}=id_I, \label{c0}\\
&&(T_{X,Y}\otimes id_Z)\circ T_{X\otimes Y, Z}=
(id_X \otimes T_{Y,Z})\circ T_{X,Y\otimes Z}.
\label{c1}
\end{eqnarray}
A category equipped with a laycle is called a laycled category.
\end{definition}
\begin{remark}
It $T$ is a laycle on ${\cal C}$ 
then we also have    
$T_{X,I}=T_{I,X}=id_X$, for all $X\in {\cal C}$. Also, it is clear that 
if $({\cal C},T)$ is entwined then $({\cal C},T)$ is laycled.
\end{remark}
\begin{remark}
It is obvious that $T$ is a laycle if and only if $(id_{{\cal C}}, id_I, 
\varphi _2(X, Y):=T_{X, Y})$ is a monoidal functor from ${\cal C}$ to 
itself. So, directly from the properties of monoidal functors, it follows 
that the composition of two laycles is a laycle and the inverse of a laycle 
is a laycle. 
\end{remark}
\begin{example} {\em  
Let $H$ be a Hopf algebra, $\sigma \in Reg^2_L(H)$  
and ${\cal C}={\cal M}^H$, the category of right $H$-comodules, 
with tensor product $(m\ot n)_{(0)}\ot (m\ot n)_{(1)}=
(m_{(0)}\ot n_{(0)})\ot m_{(1)}n_{(1)}$. Define 
$T_{M, N}(m\ot n)=m_{(0)}\ot n_{(0)}\sigma (m_{(1)}, n_{(1)})$, for all 
$M, N\in {\cal M}^H$, $m\in M$, $n\in N$.  
Then $\sigma $ is a lazy 
2-cocycle on $H$ if and only if $T$ is a laycle on ${\cal M}^H$. Dually, 
if $F=F^1\otimes F^2\in H\otimes H$ is invertible and satisfies  
$(\varepsilon \otimes id)(F)=(id\otimes \varepsilon )(F)=1$, consider the 
category $_H{\cal M}$ of left $H$-modules, with tensor product given by 
$h\cdot (m\otimes n)=h_1\cdot m\otimes h_2\cdot n$, for all 
$M, N\in $$\;_H{\cal M}$, $m\in M$, $n\in N$; define $T_{M, N}(m\otimes n)=
F^1\cdot m\otimes F^2\cdot n$.  Then $F$ is a lazy twist if and only if 
$T$ is a laycle on $_H{\cal M}$.} 
\end{example}

If $T$ is a laycle on ${\cal C}$, we define  
the families $T^b_{X, Y, Z},\; T^f_{X, Y, Z}:X\otimes Y\otimes Z 
\rightarrow X\otimes Y\otimes Z$ (notation as in \cite{brug}) 
of natural isomorphisms in ${\cal C}$ 
associated to it, by 
\begin{eqnarray}
&&T^b_{X,Y,Z}:=(id_X \otimes T_{Y,Z}^{-1})\circ T_{X\otimes Y, Z}=
T_{X,Y\otimes Z}\circ (T_{X, Y}^{-1}\otimes id_Z), \label{ela} \\
&&T^f_{X,Y,Z}:=T_{X\otimes Y, Z}\circ (id_X \otimes T_{Y,Z}^{-1})=
(T_{X, Y}^{-1}\otimes id_Z)\circ T_{X,Y\otimes Z}. \label{fla} 
\end{eqnarray} 
\begin{proposition}\label{caractlaycle} 
Let ${\cal C}$ be a monoidal category.\\
(i) If $T$ is a laycle on ${\cal C}$ then for all $U, V, W\in {\cal C}$ 
we have 
\begin{eqnarray}
&&T^f_{U\otimes V,W,X}=T^f_{U,V\otimes W,X}\circ (id_U\otimes T^f_{V,W,X}),  
\label{impl11}\\
&&T^f_{U,V,W\otimes X}=(T^f_{U,V,W}\otimes id_X)\circ T^f_{U,V\otimes W,X}.
\label{impl12}
\end{eqnarray}
Conversely, if $A_{U, V, W}:U\otimes V\otimes W\rightarrow 
U\otimes V\otimes W$ is a family of natural isomorphisms such that  
(\ref{impl11}) and (\ref{impl12}) with $A$ instead of $T^f$ hold, 
then $T_{U,V}:=A_{U,I,V}$ is a laycle on ${\cal C}$.\\
(ii) If $T$ is a laycle on ${\cal C}$ then 
for all $U, V, W\in {\cal C}$ we have   
\begin{eqnarray}
&&T^b_{U\otimes V,W,X}=(id_U \otimes T^b_{V,W,X})\circ T^b_{U,V\otimes W,X}, 
\label{impl21} \\
&&T^b_{U,V,W\otimes X}=T^b_{U,V\otimes W,X}\circ (T^b_{U,V,W}\otimes id_X). 
\label{impl22}
\end{eqnarray}
Conversely, if $B_{U, V, W}:U\otimes V\otimes W\rightarrow 
U\otimes V\otimes W$ is a family of natural isomorphisms such that  
(\ref{impl21}) and (\ref{impl22}) with $B$ instead of $T^b$ hold, 
then $T_{U,V}:=B_{U,I,V}$ is a laycle on ${\cal C}$.
\end{proposition}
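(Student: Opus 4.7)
My plan is to handle parts (i) and (ii) in parallel. Each direct implication reduces to a short cancellation once one uses the two equivalent formulations of $T^f$ (respectively $T^b$) furnished by the laycle axiom, while each converse recovers $T$ by specializing the three-variable identities at a middle or side entry equal to $I$.

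For the direct implication in (i), the first step is to justify both equalities
$T^f_{X,Y,Z} = T_{X\otimes Y,Z}\circ(id_X\otimes T^{-1}_{Y,Z}) = (T^{-1}_{X,Y}\otimes id_Z)\circ T_{X,Y\otimes Z}$
appearing in the definition of $T^f$; this follows by composing the laycle axiom (\ref{c1}) with the appropriate inverse on one side and invoking the naturality identities (\ref{lac1})--(\ref{lac2}). Now substituting the first expression into both sides of (\ref{impl11}), the inner factor $(id_U\otimes T^{-1}_{V\otimes W,X})\circ(id_U\otimes T_{V\otimes W,X})$ appearing on the right-hand side collapses to the identity, leaving exactly the left-hand side. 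Equation (\ref{impl12}) is verified symmetrically using the second expression for $T^f$, with the cancellation now occurring at the middle factor $(T_{U,V\otimes W}\otimes id_X)\circ(T^{-1}_{U,V\otimes W}\otimes id_X)$.

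For the converse of (i), assume $A$ is a family of natural isomorphisms satisfying (\ref{impl11})--(\ref{impl12}) and put $T_{X,Y}:=A_{X,I,Y}$. Setting $W=I$ in (\ref{impl11}) yields
$A_{U,V,X} = T_{U\otimes V,X}\circ(id_U\otimes T^{-1}_{V,X}),$
while setting $V=I$ in (\ref{impl12}) yields
$A_{U,V,X} = (T^{-1}_{U,V}\otimes id_X)\circ T_{U,V\otimes X}.$
Equating these two expressions and composing on the left with $T_{U,V}\otimes id_X$ and on the right with $id_U\otimes T_{V,X}$ produces
$(T_{U,V}\otimes id_X)\circ T_{U\otimes V,X} = T_{U,V\otimes X}\circ(id_U\otimes T_{V,X}).$
A single application of the naturality relation (\ref{lac2}) reorders the right-hand side as $(id_U\otimes T_{V,X})\circ T_{U,V\otimes X}$, which is precisely (\ref{c1}); the normalization $T_{I,I}=id_I$ follows by specializing (\ref{impl11}) first to $U=V=I$ (which forces $A_{I,W,X}=id$) and then to $W=X=I$.

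Part (ii) is handled by the completely parallel argument: use the two equivalent forms of $T^b$ to effect the cancellations for (\ref{impl21})--(\ref{impl22}) in the direct direction, and specialize (\ref{impl21}) at $W=I$ and (\ref{impl22}) at $V=I$ to extract
$B_{U,V,X} = (id_U\otimes T^{-1}_{V,X})\circ T_{U\otimes V,X} = T_{U,V\otimes X}\circ(T^{-1}_{U,V}\otimes id_X),$
from which (\ref{c1}) emerges after one application of (\ref{lac1}) or (\ref{lac2}). The only step that really requires care---and is the essential content of the proposition---is verifying that the compatibility between the two specialized expressions for $A$ (or $B$), together with naturality of $T$, is exactly the laycle equation; everything else is bookkeeping with tensor factors.
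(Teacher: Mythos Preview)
Your argument is correct and matches the paper's proof essentially step for step: the direct implications reduce to inserting and cancelling $id_U\otimes T^{\pm 1}_{V\otimes W,X}$ (or the symmetric factor for (\ref{impl12})), and the converse uses exactly the specializations $W=I$ in (\ref{impl11}) and $V=I$ in (\ref{impl12}) to obtain two expressions for $A_{U,V,X}$ whose comparison yields (\ref{c1}). The only cosmetic differences are that you derive $T_{I,I}=id_I$ from $U=V=I$ rather than the paper's $V=W=X=I$, and you make explicit the appeal to naturality (\ref{lac2}) that the paper leaves implicit in the phrase ``which together imply (\ref{c1})''.
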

\begin{proof} We prove (i), while (ii) is similar and left to the reader. 
We compute:
\begin{eqnarray*}
T^f_{U\otimes V,W,X}&=&
T_{U\otimes V\otimes W, X}\circ (id_U\otimes id_V\otimes T_{W,X}^{-1})\\
&=&T_{U\otimes V\otimes W, X}\circ(id_{U}\otimes T_{V\otimes W,X}^{-1})\\
&&\circ 
(id_{U}\otimes T_{V\otimes W,X})\circ 
(id_U\otimes id_V\otimes T_{W,X}^{-1})\\
&\overset{(\ref{c1})}{=}& 
T^f_{U, V\otimes W,X}\circ(id_U\otimes T^f_{V,W,X}), 
\end{eqnarray*}
proving (\ref{impl11}); the proof of (\ref{impl12}) is similar and left to 
the reader.
 
Assume now that $A_{-,-,-}$ is a family of natural isomorphisms  
satisfying (\ref{impl11}) and (\ref{impl12}); then obviously the family  
$T_{U,V}=A_{U,I,V}$ consists also of natural isomorphisms.  
If in (\ref{impl11}) we take $V=W=X=I$ we obtain $T_{U,I}=T_{U,I}\circ 
(id_U\otimes T_{I,I})$, 
hence $T_{I,I}=id_I$. If we take $W=I$ in (\ref{impl11}) and $V=I$ in 
(\ref{impl12}) we obtain 
\begin{eqnarray*}
&&T_{U\otimes V, X}=A_{U,V,X}\circ (id_U\otimes T_{V,X}), \;\;\;\;\;\;  
T_{U,W\otimes X}=(T_{U,W}\otimes id_X)\circ A_{U,W,X}, 
\end{eqnarray*}
which together imply (\ref{c1}). 
\end{proof} 

The categorical analogue of the operator $D^1$ from the Preliminaries 
looks as follows:
\begin{proposition} (\cite{brug}) Let ${\cal C}$ be a monoidal category and  
$R_X:X\rightarrow X$ a family of natural isomorphisms in ${\cal C}$ 
such that $R_I=id_I$. Then the family 
\begin{eqnarray}
&&D^1(R)_{X, Y}:=(R_X\ot R_Y)\circ 
R_{X\ot Y}^{-1}=R_{X\ot Y}^{-1}\circ (R_X\ot R_Y) \label{D1R}
\end{eqnarray}  
is a laycle on ${\cal C}$. 
\end{proposition}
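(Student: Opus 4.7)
The plan is to verify, in order, that (a) the two formulas given for $D^1(R)_{X,Y}$ indeed agree, (b) each $D^1(R)_{X,Y}$ is a natural isomorphism, (c) $D^1(R)_{I,I}=id_I$, and (d) the laycle axiom (\ref{c1}) holds. Step (c) will be immediate from $R_I=id_I$, and step (b) will follow from the fact that $R$, being natural isomorphisms, has inverses $R_X^{-1}$ that are still natural in $X$, hence each factor in the formula for $D^1(R)_{X,Y}$ is natural in $X$ and $Y$, and so is its composite.

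For (a), I would invoke the hypothesis that $R$ is a natural transformation of the identity functor: for any morphism $f:U\to U$ in ${\cal C}$, one has $R_U\circ f=f\circ R_U$. Applied to $f=R_X\otimes R_Y:X\otimes Y\to X\otimes Y$, this gives $R_{X\otimes Y}\circ (R_X\otimes R_Y)=(R_X\otimes R_Y)\circ R_{X\otimes Y}$, from which both expressions for $D^1(R)_{X,Y}$ coincide.

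For (d), the strategy is a straightforward computation using bifunctoriality of $\otimes$, i.e.\ $(f\otimes g)\circ (f'\otimes g')=(f\circ f')\otimes (g\circ g')$. Expanding the left-hand side of (\ref{c1}) with the first formula for the outer occurrence and the second formula for the inner occurrence yields
\begin{eqnarray*}
(D^1(R)_{X,Y}\otimes id_Z)\circ D^1(R)_{X\otimes Y,Z}
&=& (R_X\otimes R_Y\otimes id_Z)\circ (R_{X\otimes Y}^{-1}\otimes id_Z)\\
&&\circ (R_{X\otimes Y}\otimes R_Z)\circ R_{X\otimes Y\otimes Z}^{-1},
\end{eqnarray*}
and the middle two factors collapse, by bifunctoriality, to $id_{X\otimes Y}\otimes R_Z$, so the whole composite equals $(R_X\otimes R_Y\otimes R_Z)\circ R_{X\otimes Y\otimes Z}^{-1}$. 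A symmetric calculation on the right-hand side gives the same expression, proving (\ref{c1}).

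There is no real obstacle here; the only mild subtlety is choosing, at each spot, which of the two equivalent expressions for $D^1(R)$ to use so that the intermediate $R_{X\otimes Y}^{\pm 1}$ factors cancel against each other via bifunctoriality, rather than having to carry them past other factors using naturality of $R$.
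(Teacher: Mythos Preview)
Your argument is correct and is exactly the standard direct verification: naturality of $R$ gives the commutation in (a), and a judicious choice of which of the two equivalent formulas to use in each slot makes the intermediate $R_{X\otimes Y}^{\pm 1}$ (respectively $R_{Y\otimes Z}^{\pm 1}$) cancel by bifunctoriality, reducing both sides of (\ref{c1}) to $(R_X\otimes R_Y\otimes R_Z)\circ R_{X\otimes Y\otimes Z}^{-1}$.

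There is nothing to compare against in the paper itself: this proposition is stated with a citation to \cite{brug} and no proof is supplied here. Your write-up is thus a complete substitute for the omitted argument.
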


The next result (whose proof is straightforward and will be omitted) 
provides the categorical analogue of Hopf lazy cohomology:
\begin{proposition} \label{lazyco}
Let ${\cal C}$ be a small monoidal category. Then:\\
(i) If we denote by $Reg^1_L({\cal C})$ the set of families of 
natural isomorphisms $R_X:X\rightarrow X$ in ${\cal C}$ such that 
$R_I=id_I$, then  
$Reg^1_L({\cal C})$ is an abelian group under composition. \\
(ii) The set of  
laycles on ${\cal C}$ is a group, denoted by $Z^2_L({\cal C})$.\\
(iii) The map $D^1:Reg^1_L({\cal C})\rightarrow Z^2_L({\cal C})$ is a 
group morphism with image (denoted by $B^2_L({\cal C})$) contained in the  
centre of $Z^2_L({\cal C})$.

We denote by $H^2_L({\cal C})$ the group $Z^2_L({\cal C})/B^2_L({\cal C})$,   
and call it the lazy cohomology of ${\cal C}$.  
\end{proposition}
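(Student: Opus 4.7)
My plan is to verify each item as a routine computation in which every ``commutativity'' we need arises for free from naturality applied to endomorphism families; this is the categorical analogue of the observation that, in the Hopf-algebraic setting, the lazy condition forces the relevant commutations.

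For part (i), the set $Reg^1_L({\cal C})$ is visibly closed under componentwise composition, with the identity family as unit and the componentwise inverse family as inverse. The nontrivial claim is that the group is abelian: given $R,S\in Reg^1_L({\cal C})$, I apply naturality of $R$ to the morphism $S_X\colon X\to X$ to conclude $R_X\circ S_X=S_X\circ R_X$ for every object $X$. For part (ii) the quickest route is to quote the remark already made after the definition of a laycle: a family $T_{X,Y}$ is a laycle precisely when $(id_{\cal C},id_I,T)$ is a monoidal endofunctor of ${\cal C}$, and monoidal endofunctor structures on the identity functor compose and invert. Alternatively a direct check from (\ref{c0})--(\ref{c1}) is immediate, and the identity family is visibly a laycle.

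For part (iii), I first verify that $D^1(R)$ really is a laycle. The second expression in (\ref{D1R}) follows from naturality of $R$ applied to the endomorphism $R_X\otimes R_Y$; substituting $D^1(R)$ into (\ref{c1}) then makes both sides collapse to $(R_X\otimes R_Y\otimes R_Z)\circ R_{X\otimes Y\otimes Z}^{-1}$, after one use of naturality of $R$ on $R_{X\otimes Y}$. To show that $D^1$ is a group morphism I expand $D^1(R\circ S)_{X,Y}=(R_XS_X\otimes R_YS_Y)\circ (R_{X\otimes Y}S_{X\otimes Y})^{-1}$ and use naturality of $R$ on $S_X\otimes S_Y$, together with naturality of $S$ on $R_{X\otimes Y}^{-1}$, to regroup it into $D^1(R)_{X,Y}\circ D^1(S)_{X,Y}$.

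Finally, for centrality of $B^2_L({\cal C})$ inside $Z^2_L({\cal C})$, I take any laycle $T$ and apply naturality of $T$ to the endomorphism $R_X\otimes R_Y$ and naturality of $R$ to the endomorphism $T_{X,Y}$; these two commutations let me transport $T_{X,Y}$ past both $R_X\otimes R_Y$ and $R_{X\otimes Y}^{-1}$, yielding $D^1(R)_{X,Y}\circ T_{X,Y}=T_{X,Y}\circ D^1(R)_{X,Y}$. No step is expected to be a serious obstacle; the whole argument is bookkeeping governed by naturality, which is exactly why the authors flag the proof as straightforward.
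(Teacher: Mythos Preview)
Your argument is correct and is exactly the kind of routine naturality-based verification the authors had in mind; the paper in fact omits the proof entirely, noting only that it ``is straightforward and will be omitted.'' The one minor remark is that in your check of (\ref{c1}) for $D^1(R)$ no naturality is actually needed---the cancellation $(R_{X\otimes Y}^{-1}\otimes id_Z)\circ(R_{X\otimes Y}\otimes R_Z)=id_{X\otimes Y}\otimes R_Z$ is pure interchange---but this does not affect the validity of your proof.
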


A basic property of lazy cocycles on Hopf algebras (see \cite{bichon}) is  
that they act on coquasitriangular structures. This property extends 
to the categorical setting:
\begin{proposition} \label{conjug} 
Let ${\cal C}$ be a monoidal category, $T$ a laycle and $c$ a  
braiding on ${\cal C}$. Then the family $c^T_{X, Y}:= 
T_{Y, X}\circ c_{X, Y}\circ T_{X, Y}^{-1}$ is also a braiding on ${\cal C}$. 
\end{proposition}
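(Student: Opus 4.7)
The plan is to verify that $c^T$ is a family of natural isomorphisms satisfying the two hexagon axioms (\ref{braid1}) and (\ref{braid2}). Naturality (in both variables) is automatic, since each $c^T_{X, Y}$ is obtained from natural isomorphisms by composition. I would focus on the hexagon (\ref{braid1}); the other one then follows by a perfectly symmetric argument, using the second half of the laycle axiom in place of the first. Also notice that a conceptual shortcut is available: the remark preceding this proposition says that $T$ is exactly the datum making $(id_{\cal C}, id_I, T)$ a monoidal autofunctor of ${\cal C}$, and transport of structure along such an autoequivalence produces a new braiding given by the standard formula $T_{Y, X}\circ c_{X, Y}\circ T_{X, Y}^{-1}$; I would only use this as a sanity check and argue by direct computation.

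For the direct verification, I would start from the right-hand side of (\ref{braid1}), namely $(id_Y\ot c^T_{X, Z})\circ (c^T_{X, Y}\ot id_Z)$, and expand each $c^T$. This produces a string alternating $T^{\pm 1}$ with $c$. The two factors $(id_Y\ot T_{X, Z}^{-1})\circ (T_{Y, X}\ot id_Z)$ sit in the middle and, by the laycle axiom (\ref{c1}) applied to the triple $(Y, X, Z)$, may be rewritten as $T_{Y, X\ot Z}\circ T_{Y\ot X, Z}^{-1}$. Next, naturality of $T_{-, Z}$ applied to $c_{X, Y}$ pushes $T_{Y\ot X, Z}^{-1}$ past $c_{X, Y}\ot id_Z$, turning it into $(c_{X, Y}\ot id_Z)\circ T_{X\ot Y, Z}^{-1}$; dually, naturality of $T_{Y, -}$ applied to $c_{X, Z}$ brings $T_{Y, X\ot Z}$ next to $id_Y\ot c_{X, Z}$, and the two surviving copies of $c$ fuse into $c_{X, Y\ot Z}$ via the hexagon (\ref{braid1}) for $c$.

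The remaining expression has the form $(T_{Y, Z}\ot id_X)\circ T_{Y\ot Z, X}\circ c_{X, Y\ot Z}\circ T_{X\ot Y, Z}^{-1}\circ (T_{X, Y}^{-1}\ot id_Z)$. A second use of the laycle axiom, at $(X, Y, Z)$ and in inverted form, collapses the tail $T_{X\ot Y, Z}^{-1}\circ (T_{X, Y}^{-1}\ot id_Z)$ to $T_{X, Y\ot Z}^{-1}\circ (id_X\ot T_{Y, Z}^{-1})$. Naturality of $c_{X, Y\ot Z}$ with respect to $T_{Y, Z}^{-1}$ then moves the trailing $id_X\ot T_{Y, Z}^{-1}$ across $c$ as $T_{Y, Z}^{-1}\ot id_X$, and naturality of $T_{-, X}$ lets it further commute with $T_{Y\ot Z, X}$; finally, $(T_{Y, Z}\ot id_X)\circ (T_{Y, Z}^{-1}\ot id_X)$ cancels, leaving $T_{Y\ot Z, X}\circ c_{X, Y\ot Z}\circ T_{X, Y\ot Z}^{-1}=c^T_{X, Y\ot Z}$, which is (\ref{braid1}) for $c^T$.

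The only real difficulty is bookkeeping: one has to invoke the laycle axiom twice, the hexagon for $c$ once, and several naturality squares (for both $T$ and $c$) in precisely the right order, because $T^{\pm 1}$ and $c$ are interleaved in an arrangement that is not symmetric in the three slots. Keeping careful track of which naturality square one is applying at each step — in particular, the four distinct ways $T$ may be natural in its two slots with respect to $c$-morphisms or to $T^{\pm 1}$-morphisms — is the main place where a slip is likely, but no deeper obstruction arises.
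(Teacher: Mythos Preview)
Your approach is correct and essentially the same as the paper's: a direct verification of the hexagon axioms using the laycle identity (\ref{c1}), the hexagon for $c$, and several naturality squares for $T$ and $c$. The paper runs the computation in the opposite direction (starting from $c^T_{X,Y\ot Z}$) and isolates two small auxiliary identities coming from naturality of $c$, but the ingredients and logic are identical.

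One small bookkeeping slip: after you fuse the two copies of $c$ into $c_{X,Y\ot Z}$, the head of the expression is $(id_Y\ot T_{Z,X})\circ T_{Y,Z\ot X}$, not $(T_{Y,Z}\ot id_X)\circ T_{Y\ot Z,X}$ as you wrote. These are equal, but only by a \emph{third} application of the laycle axiom, at the triple $(Y,Z,X)$; without it the final cancellation you describe does not go through. So you use (\ref{c1}) three times, not twice. Also, before you can slide $id_X\ot T_{Y,Z}^{-1}$ across $c_{X,Y\ot Z}$, it must first cross $T_{X,Y\ot Z}^{-1}$, which needs the naturality relation (\ref{lac2}); you omit this intermediate step. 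Both are easy fixes and do not affect the substance of the argument. Your conceptual remark about transport along the monoidal autofunctor $(id_{\cal C},id_I,T)$ is exactly how the paper proves the subsequent Proposition~\ref{equival}.
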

\begin{proof}
The naturality of $c$    
with respect to the morphisms $id_X$ and $T_{Y, Z}^{-1}$ 
together with (\ref{braid1}) imply 
\begin{multline}\label{ajut1}
\;\;\;\;\;\;\;\;\;\;\;\;\;\;\;\;\;\;\;\;\;(T_{Y, Z}^{-1}\otimes id_X)\circ   
(id_Y\otimes c_{X, Z})\circ (c_{X, Y}\otimes id_Z)\\
=(id_Y\otimes c_{X, Z})\circ (c_{X, Y}\otimes id_Z)\circ     
(id_X\otimes T_{Y, Z}^{-1}). 
\end{multline}
The naturality of $c$ with respect to the   
morphisms $T_{X, Y}^{-1}$ and $id_Z$ together with  
(\ref{braid2}) imply 
\begin{multline}\label{ajut2}
\;\;\;\;\;\;\;\;\;\;\;\;\;\;\;\;\;\;\;(id_Z\otimes T_{X, Y}^{-1})\circ 
(c_{X, Z}\otimes id_Y)\circ (id_X\otimes c_{Y, Z})\\
=(c_{X, Z}\otimes id_Y)\circ    
(id_X\otimes c_{Y, Z})\circ (T_{X, Y}^{-1}\otimes id_Z).   
\end{multline}
We check (\ref{braid1}) for $c^T$; we compute:
\begin{eqnarray*}
c^T_{X, Y\otimes Z}&=&T_{Y\otimes Z, X}\circ c_{X, Y\otimes Z}\circ 
T_{X, Y\otimes Z}^{-1}\\
&\overset{(\ref{braid1}),\; (\ref{c1})}{=}& 
(id_Y\otimes T_{Z, X})\circ T_{Y, Z\otimes X}\circ (T_{Y, Z}^{-1}
\otimes id_X)\circ (id_Y\otimes c_{X, Z})\\
&&\circ (c_{X, Y}\otimes id_Z)
\circ (id_X\otimes T_{Y, Z})\circ T_{X\otimes Y, Z}^{-1}\circ 
(T_{X, Y}^{-1}\otimes id_Z)\\
&\overset{(\ref{ajut1})}{=}& 
(id_Y\otimes T_{Z, X})\circ T_{Y, Z\otimes X}\circ (id_Y\otimes c_{X, Z})\\
&&\circ (c_{X, Y}\otimes id_Z)\circ T_{X\otimes Y, Z}^{-1}\circ  
(T_{X, Y}^{-1}\otimes id_Z)\\
&\overset{naturality\;of\;T}{=}& 
(id_Y\otimes T_{Z, X})\circ (id_Y\otimes c_{X, Z})\circ T_{Y, X\otimes Z}\\
&&\circ T_{Y\otimes X, Z}^{-1}\circ (c_{X, Y}\otimes id_Z)\circ   
(T_{X, Y}^{-1}\otimes id_Z)\\
&\overset{(\ref{c1})}{=}& 
(id_Y\otimes T_{Z, X})\circ (id_Y\otimes c_{X, Z})\circ (id_Y\otimes 
T_{X, Z}^{-1})\\
&&\circ (T_{Y, X}\otimes id_Z)\circ (c_{X, Y}\otimes id_Z)\circ   
(T_{X, Y}^{-1}\otimes id_Z)\\
&=&(id_Y\otimes c^T_{X, Z})\circ (c^T_{X, Y}\otimes id_Z), \;\;\;q.e.d.
\end{eqnarray*}
Similarly, we check (\ref{braid2}) for $c^T$:
\begin{eqnarray*}
c^T_{X\otimes Y, Z}&=&T_{Z, X\otimes Y}\circ c_{X\otimes Y, Z}\circ 
T_{X\otimes Y, Z}^{-1}\\
&\overset{(\ref{braid2}),\; (\ref{c1})}{=}& 
(T_{Z, X}\otimes id_Y)\circ T_{Z\otimes X, Y}\circ (id_Z\otimes 
T_{X, Y}^{-1})\circ (c_{X, Z}\otimes id_Y)\\
&&\circ (id_X\otimes c_{Y, Z})\circ (T_{X, Y}\otimes id_Z)\circ 
T_{X, Y\otimes Z}^{-1}\circ (id_X\otimes T_{Y, Z}^{-1})\\  
&\overset{(\ref{ajut2})}{=}& 
(T_{Z, X}\otimes id_Y)\circ T_{Z\otimes X, Y}\circ (c_{X, Z}\otimes id_Y)\\
&&\circ (id_X\otimes c_{Y, Z})\circ  
T_{X, Y\otimes Z}^{-1}\circ (id_X\otimes T_{Y, Z}^{-1})\\  
&\overset{naturality\;of\;T}{=}& 
(T_{Z, X}\otimes id_Y)\circ (c_{X, Z}\otimes id_Y)\circ T_{X\otimes Z, Y}\\
&&\circ T_{X, Z\otimes Y}^{-1}\circ (id_X\otimes c_{Y, Z}) 
\circ (id_X\otimes T_{Y, Z}^{-1})\\  
&\overset{(\ref{c1})}{=}& 
(T_{Z, X}\otimes id_Y)\circ (c_{X, Z}\otimes id_Y)\circ 
(T_{X, Z}^{-1}\otimes id_Y)\\
&&\circ (id_X\otimes T_{Z, Y})\circ (id_X\otimes c_{Y, Z}) 
\circ (id_X\otimes T_{Y, Z}^{-1})\\
&=&(c^T_{X, Z}\otimes id_Y)\circ (id_X\otimes c^T_{Y, Z}),   
\end{eqnarray*}
finishing the proof.
\end{proof}
\begin{proposition}\label{trivact}
Let ${\cal C}$ be a monoidal category, $c$ a braiding on ${\cal C}$ 
and $R_X:X\rightarrow X$ a family of natural isomorphisms in ${\cal C}$ 
such that $R_I=id_I$. Then $c^{D^1(R)}=c$, where $D^1(R)$ is the 
laycle given by (\ref{D1R}). 
\end{proposition}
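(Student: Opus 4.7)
The plan is to unfold both $c^{D^1(R)}_{X,Y}$ and $D^1(R)_{X,Y}$ and show that the $R$-terms cancel using the naturality of $c$ and of $R$. Write $T = D^1(R)$, so that by definition
\[
T_{X,Y} = (R_X\otimes R_Y)\circ R_{X\otimes Y}^{-1},
\qquad
T_{X,Y}^{-1} = R_{X\otimes Y}\circ (R_X^{-1}\otimes R_Y^{-1}),
\]
and similarly for $T_{Y,X}$. Substituting these into $c^T_{X,Y} = T_{Y,X}\circ c_{X,Y}\circ T_{X,Y}^{-1}$ gives
\[
c^T_{X,Y}=(R_Y\otimes R_X)\circ R_{Y\otimes X}^{-1}\circ c_{X,Y}\circ R_{X\otimes Y}\circ (R_X^{-1}\otimes R_Y^{-1}).
\]

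The first reduction uses the naturality of $R$ (viewed as a natural transformation $\mathrm{id}_{\mathcal C}\to\mathrm{id}_{\mathcal C}$) applied to the morphism $c_{X,Y}:X\otimes Y\to Y\otimes X$. This yields $R_{Y\otimes X}\circ c_{X,Y}=c_{X,Y}\circ R_{X\otimes Y}$, hence $R_{Y\otimes X}^{-1}\circ c_{X,Y}\circ R_{X\otimes Y}=c_{X,Y}$. After this cancellation, the expression collapses to
\[
c^T_{X,Y}=(R_Y\otimes R_X)\circ c_{X,Y}\circ (R_X^{-1}\otimes R_Y^{-1}).
\]

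The second reduction uses the naturality of the braiding $c$ with respect to the pair of morphisms $R_X^{-1}:X\to X$ and $R_Y^{-1}:Y\to Y$, which gives $c_{X,Y}\circ (R_X^{-1}\otimes R_Y^{-1})=(R_Y^{-1}\otimes R_X^{-1})\circ c_{X,Y}$. Feeding this in, the outer factor $R_Y\otimes R_X$ cancels with $R_Y^{-1}\otimes R_X^{-1}$ and we are left with $c^T_{X,Y}=c_{X,Y}$, which is exactly the claim.

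There is no serious obstacle in this argument; the only thing to be careful about is distinguishing the two different uses of naturality — first of $R$ (as a self-natural-transformation of the identity, evaluated at the morphism $c_{X,Y}$), and then of $c$ (as a natural isomorphism of bifunctors, evaluated at the morphisms $R_X^{-1}$ and $R_Y^{-1}$). Note that the proof does not use (\ref{c0}) or (\ref{c1}) for $T$ directly; it uses only the definition (\ref{D1R}) of $D^1(R)$ together with the naturality properties, reflecting the fact that $D^1(R)$ is essentially a coboundary and hence acts trivially by conjugation on any braiding.
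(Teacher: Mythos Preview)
Your proof is correct and follows exactly the approach the paper indicates: the paper's own proof is the single sentence ``Follows immediately by using the naturality of $c$ and $R$,'' and you have simply written out those two naturality steps in full detail.
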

\begin{proof}
Follows immediately by using the naturality of $c$ and $R$. 
\end{proof}
\begin{corollary}
If ${\cal C}$ is a small monoidal category, then the group $H^2_L({\cal C})$  
acts on the set of braidings of ${\cal C}$.  
\end{corollary}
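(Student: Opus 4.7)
The plan is to show that the formula $(T,c) \mapsto c^T$ provided by Proposition~\ref{conjug} first defines an action of the group $Z^2_L({\cal C})$ on the set $\mathrm{Br}({\cal C})$ of braidings of ${\cal C}$, and then to descend this action to the quotient $H^2_L({\cal C}) = Z^2_L({\cal C})/B^2_L({\cal C})$ by checking that the coboundary subgroup acts trivially.

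First I would verify the two action axioms for $Z^2_L({\cal C})$ acting on $\mathrm{Br}({\cal C})$. The identity laycle $T_{X,Y} = id_{X\otimes Y}$ obviously satisfies $c^T = c$ directly from the formula $c^T_{X,Y} = T_{Y,X}\circ c_{X,Y}\circ T_{X,Y}^{-1}$. For the compatibility with composition, recall that the group law on $Z^2_L({\cal C})$ is componentwise composition, $(T\circ T')_{X,Y} = T_{X,Y}\circ T'_{X,Y}$. A one-line calculation then gives
\begin{eqnarray*}
c^{T\circ T'}_{X,Y} &=& (T\circ T')_{Y,X}\circ c_{X,Y}\circ (T\circ T')_{X,Y}^{-1}\\
&=& T_{Y,X}\circ \bigl(T'_{Y,X}\circ c_{X,Y}\circ T'^{-1}_{X,Y}\bigr)\circ T_{X,Y}^{-1}
= (c^{T'})^T_{X,Y},
\end{eqnarray*}
so $T\cdot c := c^T$ is a left action: $(T\circ T')\cdot c = T\cdot (T'\cdot c)$.

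Next I would pass to the quotient. By Proposition~\ref{trivact}, every element of $B^2_L({\cal C}) = D^1(Reg^1_L({\cal C}))$ acts trivially, and by Proposition~\ref{lazyco}(iii) the subgroup $B^2_L({\cal C})$ lies in the centre of $Z^2_L({\cal C})$ and is in particular normal. Two laycles $T, T'$ lying in the same coset modulo $B^2_L({\cal C})$ can be written $T' = T\circ D^1(R)$ for some $R\in Reg^1_L({\cal C})$, and then
\[
c^{T'} \;=\; c^{T\circ D^1(R)} \;=\; T\cdot \bigl(D^1(R)\cdot c\bigr) \;=\; T\cdot c \;=\; c^T,
\]
using Proposition~\ref{trivact} in the third step. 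Hence the rule $(TB^2_L({\cal C}))\cdot c := c^T$ is well defined and inherits the action axioms, giving the desired action of $H^2_L({\cal C})$ on $\mathrm{Br}({\cal C})$.

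There is no real obstacle: the substantial content has already been placed in Propositions~\ref{conjug} and \ref{trivact}, and the rest is the standard group-theoretic fact that a normal subgroup acting trivially produces a well-defined action of the quotient. The only point requiring a little care is keeping track of the order of composition when checking $c^{T\circ T'} = (c^{T'})^T$, which is immediate from the formula defining $c^T$.
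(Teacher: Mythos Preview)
Your proposal is correct and follows exactly the approach intended by the paper, which states the corollary without proof as an immediate consequence of Propositions~\ref{conjug} and~\ref{trivact}; you have simply spelled out the standard verification of the action axioms and the descent to the quotient that the paper leaves implicit.
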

\begin{proposition}\label{equival}
In the hypotheses of Proposition \ref{conjug}, the braided monoidal 
categories $({\cal C}, c)$ and $({\cal C}, c^T)$ are equivalent (as 
braided monoidal categories).  
\end{proposition}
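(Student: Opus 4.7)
The plan is to exhibit the identity functor on ${\cal C}$, equipped with a monoidal structure given by $T$ itself, as a braided monoidal equivalence between $({\cal C}, c^T)$ and $({\cal C}, c)$. By the second remark following the definition of laycle --- namely that $T$ is a laycle on ${\cal C}$ if and only if $(id_{{\cal C}}, id_I, \varphi_2(X,Y) := T_{X,Y})$ is a monoidal functor from ${\cal C}$ to itself --- the triple $(id_{{\cal C}}, id_I, T_{X,Y})$ is automatically a monoidal functor, with no further axiom checking required; I would then view it as a monoidal functor $({\cal C}, c^T) \to ({\cal C}, c)$.

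The only real thing to verify is braided-functor compatibility, which with this data reads
$$T_{Y, X} \circ c_{X, Y} = c^T_{X, Y} \circ T_{X, Y}.$$
This is tautological: by the definition $c^T_{X, Y} = T_{Y, X} \circ c_{X, Y} \circ T_{X, Y}^{-1}$ given in Proposition~\ref{conjug}, the right-hand side collapses to $T_{Y, X} \circ c_{X, Y}$, matching the left-hand side. In effect the compatibility is built into the very formula defining $c^T$.

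Since the underlying functor is literally $id_{{\cal C}}$, it is trivially an equivalence of categories; a quasi-inverse as a braided monoidal functor $({\cal C}, c) \to ({\cal C}, c^T)$ is obtained by equipping $id_{{\cal C}}$ with the monoidal structure $T^{-1}$, which is again a laycle by the same remark (the inverse of a laycle is a laycle), and which satisfies the analogous compatibility by the same one-line argument. There is essentially no obstacle here: the content of the proposition is the conceptual observation that a laycle is exactly a monoidal-functor structure on $id_{{\cal C}}$, so conjugation of a braiding by a laycle is just the transport of structure of that braiding along a monoidal self-equivalence of ${\cal C}$. The only small point to be careful about is the direction in which to orient the equivalence and the convention in the braided-functor axiom, but either orientation works symmetrically.
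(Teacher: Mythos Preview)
Your proof is correct and takes essentially the same approach as the paper: both equip the identity functor with a monoidal structure coming from the laycle and observe that the defining formula for $c^T$ is precisely the braided-functor compatibility. The only cosmetic difference is orientation---the paper uses $\varphi_2 = T^{-1}$ to go from $({\cal C}, c)$ to $({\cal C}, c^T)$, whereas you use $\varphi_2 = T$ in the reverse direction---which you yourself flag as immaterial.
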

\begin{proof}
We define the monoidal functor $(F, \varphi _0, \varphi _2):{\cal C}
\rightarrow {\cal C}$ by $F=id_{\cal C}$, $\varphi _0=id_I$ and 
$\varphi _2(X, Y):X\otimes Y\rightarrow X\otimes Y$, 
$\varphi _2(X, Y):= T_{X, Y}^{-1}$, which is obviously a   
monoidal equivalence. Moreover, the formula $c^T_{X, Y}=T_{Y, X}\circ 
c_{X, Y}\circ T_{X, Y}^{-1}$ expresses exactly the fact that 
$(F, \varphi _0, \varphi _2)$ is a braided functor from $({\cal C}, c)$ 
to $({\cal C}, c^T)$ 
\end{proof}

If ${\cal C}$ is a braided monoidal category with braiding $c$, we denote 
by $Br ({\cal C}, c)$ its Brauer group as introduced in \cite{voz}. 
Thus, as a consequence of Proposition \ref{equival}, we obtain the following 
generalization of \cite{carnovale}, Proposition 3.1:
\begin{corollary}
In the hypotheses of Proposition \ref{conjug}, the Brauer groups 
$Br ({\cal C}, c)$ and $Br ({\cal C}, c^T)$ are isomorphic. 
\end{corollary}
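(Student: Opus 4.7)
The plan is to reduce the statement to the fact that the Brauer group construction of \cite{voz} is a functorial invariant of braided monoidal categories, and then invoke \prref{equival}.

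First I recall that $Br({\cal C}, c)$ is built from equivalence classes of Azumaya algebras in the braided monoidal category $({\cal C}, c)$, with the group operation coming from the braided tensor product $\otimes$ together with $c$ (both the opposite algebra $A^{op}$ and the ``Azumaya'' condition, expressed via the canonical maps $A\otimes A^{op}\to End(A)$, involve the braiding in an essential way). A braided monoidal equivalence $(F, \varphi_0, \varphi_2):({\cal C}, c)\to ({\cal C}', c')$ sends algebras to algebras, opposite algebras to opposite algebras, and preserves the canonical maps defining Azumaya algebras; consequently it induces a well-defined group homomorphism $Br({\cal C}, c)\to Br({\cal C}', c')$, which is an isomorphism whenever $(F, \varphi_0, \varphi_2)$ is a braided equivalence.

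By \prref{equival}, the triple $(id_{\cal C}, id_I, T^{-1})$ is a braided monoidal equivalence from $({\cal C}, c)$ to $({\cal C}, c^T)$, so applying the functoriality just recalled yields the required group isomorphism $Br({\cal C}, c)\cong Br({\cal C}, c^T)$.

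The main (in fact only) point requiring care is the functoriality of the Brauer group under braided monoidal equivalences; this is a standard feature of the construction in \cite{voz} (it is precisely what underlies \cite{carnovale}, Proposition 3.1, of which our statement is a direct generalization), so no further computation is required.
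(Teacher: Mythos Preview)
Your proof is correct and follows exactly the same approach as the paper: the corollary is stated there as an immediate consequence of \prref{equival}, relying on the invariance of the Brauer group of \cite{voz} under braided monoidal equivalence (which, as you note, is also what underlies \cite{carnovale}, Proposition~3.1). Your additional remarks on why a braided equivalence preserves Azumaya algebras and the group structure are a helpful expansion, but no further argument is needed.
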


\begin{definition}
Let ${\cal C}$ be a monoidal category. A {\bf quasi-braiding} on  
${\cal C}$ 
is a family of natural isomorphisms $q_{X,Y}:X\otimes Y\rightarrow 
Y\otimes X$ in ${\cal C}$ satisfying the following axioms (for all   
$X, Y, Z\in {\cal C}$):
\begin{eqnarray}
&&q_{I, I}=id_I, \label{unitatea} \\
&&q_{X,Z\otimes Y}\circ (id_X\otimes q_{Y,Z})=
q_{Y\otimes X,Z}\circ (q_{X,Y}\otimes id_Z). \label{quasico}
\end{eqnarray}
If $q_{Y, X}\circ q_{X, Y}=id_{X\otimes Y}$ for all  
$X, Y\in {\cal C}$, then $({\cal C}, q)$ is what Drinfeld calls a 
{\bf coboundary category} in \cite{drin}.    
\end{definition}

\begin{remark} If $q$ is a quasi-braiding on ${\cal C}$ then we also have  
$q_{X, I}=q_{I, X}=id_X$ and 
\begin{multline}
\;\;\;\;\;\;\;\;(q_{Y,Z}\otimes id_X)\circ q_{X,Y\otimes Z}=
q_{X,Z\otimes Y}\circ (id_X\otimes q_{Y,Z})\\
=q_{Y\otimes X,Z}\circ   
(q_{X,Y}\otimes id_Z)=(id_Z\otimes q_{X,Y})\circ q_{X\otimes Y,Z}. 
\label{conseqquasi}
\end{multline}
Consequently, the family $p_{X, Y}:=q_{Y, X}^{-1}$ is also a 
quasi-braiding.   
\end{remark}

The concept of quasi-braiding was considered (with a different name) by 
L. M. Ionescu in \cite{ionescu}, as follows. Define a monoidal category 
${\cal C}_{op}$, which is the same as ${\cal C}$ as a category, has the 
same unit $I$, and reversed tensor product: $X\otimes _{op}Y=Y\otimes X$. 
Then, a family $q_{X, Y}:X\otimes Y\rightarrow Y\otimes X$ is a 
quasi-braiding on ${\cal C}$ if and only if $(id_{{\cal C}}, id_I, 
\varphi _2(X, Y):=q_{X, Y})$ is a monoidal functor from ${\cal C}_{op}$ to  
${\cal C}$, or equivalently  
$(id_{{\cal C}}, id_I,  
\varphi _2(X, Y):=q_{Y, X})$ is a monoidal functor from ${\cal C}$ to 
${\cal C}_{op}$.   
As noted in \cite{ionescu}, any braiding is a quasi-braiding (this 
follows easily by (\ref{braid1}), (\ref{braid2}) and 
(\ref{braideq})), and quasi-braidings are related to Drinfeld's coboundary 
Hopf algebras:  
\begin{definition}(\cite{drinfeld})
A coboundary Hopf algebra is a pair $(H, R)$, where $H$ is a Hopf algebra and 
$R\in H\otimes H$ is an invertible element such that:
\begin{eqnarray}
&&R_{12}(\Delta \otimes id)(R)=R_{23}(id\otimes \Delta )(R), \label{cob1}\\
&&(\varepsilon \otimes id)(R)=(id\otimes \varepsilon )(R)=1, \label{cob2}\\
&&\Delta ^{cop}(h)R=R\Delta (h), \;\;\;\forall \;\;h\in H, \label{cob3}\\
&&R_{21}R=1\otimes 1. \label{cob4}
\end{eqnarray}
If $R$ does not necessarily satisfy (\ref{cob4}), we call it a 
{\bf quasi-coboundary}. 
\end{definition}
\begin{proposition}(\cite{ionescu})
Let $H$ be a Hopf algebra and $R=R^1\otimes R^2\in H\otimes H$ an 
invertible element. If $U, V$ are left $H$-modules, define   
$q_{U,V}:U\otimes V\rightarrow V\otimes U$ by   
$q_{U, V}(u\otimes v)=R^2\cdot v\otimes R^1\cdot u$. 
Then $q$ is a quasi-braiding  
on $_H{\cal M}$ if and only if $R$ is a quasi-coboundary on $H$.   
\end{proposition}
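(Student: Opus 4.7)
The plan is to show that each of the three defining conditions of a quasi-coboundary corresponds, separately, to one aspect of $q$ being a quasi-braiding. In each case, one direction is a direct computation using the action of $R$ via $\Delta$ on tensor products, and the converse follows by specialising all modules to the left regular representation and evaluating at the unit. Invertibility of each $q_{U,V}$ is automatic from the assumed invertibility of $R$.

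First I would check that the $H$-linearity of each $q_{U,V}$ (part of being a natural isomorphism in $_H\mathcal{M}$) is equivalent to (\ref{cob3}). Unpacking $q_{U,V}(h\cdot(u\otimes v))=h\cdot q_{U,V}(u\otimes v)$ produces $R^2h_2\otimes R^1h_1=h_1R^2\otimes h_2R^1$, and swapping tensor factors converts this into $\Delta^{cop}(h)R=R\Delta(h)$; specialising $U=V=H$ and evaluating at $1\otimes 1$ yields the converse. Next, from the quasi-braiding axioms one derives (as recorded in the Remark preceding this Proposition) that $q_{X,I}=q_{I,X}=id_X$; evaluated on $X=H$ this is exactly (\ref{cob2}), while (\ref{cob2}) directly yields these identities, in particular $q_{I,I}=id_I$.

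For the main axiom (\ref{quasico}) I would compute both sides on $x\otimes y\otimes z\in X\otimes Y\otimes Z$, using two copies $R=R^1\otimes R^2$ and $r=r^1\otimes r^2$ of the same element, obtaining
$$\mathrm{LHS}=r^2_1R^2\cdot z\otimes r^2_2R^1\cdot y\otimes r^1\cdot x,\qquad \mathrm{RHS}=r^2\cdot z\otimes r^1_1R^2\cdot y\otimes r^1_2R^1\cdot x.$$
Specialising to regular modules and evaluating at $1\otimes 1\otimes 1$ gives an identity in $H^{\otimes 3}$; applying the algebra automorphism that swaps tensor positions $1$ and $3$ rewrites it as $(id\otimes\Delta^{cop})(R)\,R_{23}=(\Delta^{cop}\otimes id)(R)\,R_{12}$. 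Using (\ref{cob3}) slot-wise, together with the fact that an element acting in one slot commutes with $R$ acting on disjoint slots, one converts each $\Delta^{cop}(R^i)$ into $\Delta(R^i)$ on the opposite side, obtaining $R_{23}(id\otimes\Delta)(R)=R_{12}(\Delta\otimes id)(R)$, i.e.\ (\ref{cob1}). All manipulations are reversible, so this gives the claimed equivalence.

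The main obstacle is the bookkeeping induced by the swap built into $q$ (from $X\otimes Y$ to $Y\otimes X$): the natural form of (\ref{quasico}) in $H^{\otimes 3}$ comes out in terms of $\Delta^{cop}$ rather than $\Delta$, and one must invoke the intertwining relation (\ref{cob3}) to recast it in the standard form (\ref{cob1}). Once this translation is made, each of the three equivalences is essentially a single short computation.
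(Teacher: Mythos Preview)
The paper does not actually prove this proposition; it is stated with a citation to \cite{ionescu} and no argument is given. So there is no ``paper's own proof'' to compare against.

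Your argument is correct. The decomposition into three pieces --- $H$-linearity of $q$ corresponding to (\ref{cob3}), the unit conditions $q_{X,I}=q_{I,X}=id_X$ corresponding to (\ref{cob2}), and the main axiom (\ref{quasico}) corresponding to (\ref{cob1}) --- is the natural one, and your computations check out. In particular, your identification of (\ref{quasico}) (after specialising to regular modules and swapping slots $1$ and $3$) with $(id\otimes\Delta^{cop})(R)\,R_{23}=(\Delta^{cop}\otimes id)(R)\,R_{12}$ is right, and applying (\ref{cob3}) in slots $2,3$ and $1,2$ respectively does convert this into (\ref{cob1}); since (\ref{cob3}) is available in both directions (from $H$-linearity in one, as a hypothesis in the other), the argument is indeed reversible. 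Naturality of $q$ with respect to module maps is automatic and could be mentioned for completeness, but this is not a gap.
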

\begin{remark}
If $T$ is a laycle on a monoidal category ${\cal C}$, then the family 
${\cal T}_{X, Y}:=T_{Y, X}$ is a laycle on ${\cal C}_{op}$. 
\end{remark}

From the description of laycles and quasi-braidings as monoidal 
structures for some identity functors and the fact that a composition of 
monoidal functors is monoidal, we obtain: 
\begin{proposition} \label{compmon}
Let ${\cal C}$ be a monoidal category, $T$ a laycle and $p$, $q$ two   
quasi-braidings on ${\cal C}$. Then the family $D_{X, Y}:=p_{Y, X}\circ    
q_{X, Y}$ is a laycle on ${\cal C}$ and 
the families $q'_{X, Y}:=T_{Y, X}\circ  
q_{X, Y}$ and $q''_{X, Y}:=q_{X, Y}\circ T_{X, Y}$ are  
quasi-braidings on ${\cal C}$. 
\end{proposition}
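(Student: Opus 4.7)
The approach is the one indicated by the remarks just before the proposition: laycles on ${\cal C}$ are exactly monoidal structures on the identity functor $id_{\cal C}:{\cal C}\to {\cal C}$, while quasi-braidings on ${\cal C}$ are monoidal structures on $id_{\cal C}$ viewed either as ${\cal C}_{op}\to {\cal C}$ (with $\varphi_2(X,Y):=q_{X,Y}$) or as ${\cal C}\to {\cal C}_{op}$ (with $\varphi_2(X,Y):=q_{Y,X}$). Since the composition of two monoidal functors is again monoidal, the whole proposition reduces to choosing, for each of the three families, the right pair of orientations among $\{{\cal C},{\cal C}_{op}\}$ for source and target, and then reading off the composite structure map from the standard formula $\varphi_2^{G\circ F}(X,Y)=G(\varphi_2^F(X,Y))\circ \varphi_2^G(F(X),F(Y))$.

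For $D_{X,Y}=p_{Y,X}\circ q_{X,Y}$ I would view $p$ as the monoidal functor ${\cal C}\to {\cal C}_{op}$ with $\varphi_2^p(X,Y):=p_{Y,X}$ and $q$ as the monoidal functor ${\cal C}_{op}\to {\cal C}$ with $\varphi_2^q(X,Y):=q_{X,Y}$. The composite $q\circ p:{\cal C}\to {\cal C}$ is then a monoidal endofunctor of ${\cal C}$ whose structure map at $(X,Y)$, using that both $p$ and $q$ are identities on objects and on morphisms, equals $q(p_{Y,X})\circ q_{X,Y}=p_{Y,X}\circ q_{X,Y}=D_{X,Y}$. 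Hence $D$ is a laycle on ${\cal C}$.

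For $q'_{X,Y}=T_{Y,X}\circ q_{X,Y}$, I would first use the preceding remark to turn $T$ into a laycle ${\cal T}_{X,Y}:=T_{Y,X}$ on ${\cal C}_{op}$ (a monoidal endofunctor of ${\cal C}_{op}$), and then compose with $q:{\cal C}_{op}\to {\cal C}$; the resulting monoidal functor ${\cal C}_{op}\to {\cal C}$ has structure map $T_{Y,X}\circ q_{X,Y}=q'_{X,Y}$, so $q'$ is a quasi-braiding. For $q''_{X,Y}=q_{X,Y}\circ T_{X,Y}$ I would instead compose $q:{\cal C}_{op}\to {\cal C}$ followed by the monoidal endofunctor $T:{\cal C}\to {\cal C}$; the resulting monoidal functor ${\cal C}_{op}\to {\cal C}$ has structure map $q_{X,Y}\circ T_{X,Y}=q''_{X,Y}$, so $q''$ is also a quasi-braiding.

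There is no real obstacle: the proof is essentially bookkeeping. The only subtle point is to pick, for each of $p$, $q$ and $T$, the correct one of the (equivalent) interpretations as a monoidal functor, and the correct order of composition, so that the general composition formula for monoidal functors reproduces exactly the prescribed right-hand side. No naturality or quasi-braiding axiom needs to be invoked by hand; all of this is already packaged into the assertion that composition of monoidal functors is monoidal.
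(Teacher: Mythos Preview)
Your proof is correct and takes exactly the approach indicated by the paper: the proposition is stated immediately after the observation that laycles and quasi-braidings are monoidal structures on identity functors between ${\cal C}$ and ${\cal C}_{op}$, and the paper offers no further argument beyond invoking that composition of monoidal functors is monoidal. Your explicit bookkeeping of sources, targets, and the composition formula $\varphi_2^{G\circ F}(X,Y)=G(\varphi_2^F(X,Y))\circ \varphi_2^G(F(X),F(Y))$ is precisely what the paper leaves implicit.
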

\begin{corollary}
Let ${\cal C}$ be a monoidal category, $T$ a laycle and $q$ a   
quasi-braiding on ${\cal C}$. Then the family $q^T_{X, Y}:= 
T_{Y, X}\circ q_{X, Y}\circ T_{X, Y}^{-1}$ is also a quasi-braiding  
on ${\cal C}$.  
\end{corollary}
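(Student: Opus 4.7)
The plan is to derive this corollary as an immediate two-step application of Proposition \ref{compmon}, using the observation (recorded in the Remark just after the definition of laycle) that the inverse of a laycle is again a laycle; no new computation should be necessary.

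First, I would note that since $T$ is a laycle, so is $T^{-1}$, in the sense that the family $(T^{-1})_{X,Y} := T_{X,Y}^{-1}$ is a laycle on ${\cal C}$. Applying the last assertion of Proposition \ref{compmon} to the laycle $T^{-1}$ and the quasi-braiding $q$ (more precisely, the statement that $q_{X,Y}\circ T_{X,Y}$ is a quasi-braiding whenever $T$ is a laycle), one obtains that the family
\[
\tilde{q}_{X,Y} := q_{X,Y}\circ T_{X,Y}^{-1} : X\otimes Y \to Y\otimes X
\]
is a quasi-braiding on ${\cal C}$.

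Next, I would apply the other half of Proposition \ref{compmon}, namely that $T_{Y,X}\circ q_{X,Y}$ is a quasi-braiding whenever $T$ is a laycle and $q$ a quasi-braiding, this time with the laycle $T$ and the quasi-braiding $\tilde{q}$ just constructed. This yields that
\[
T_{Y,X}\circ \tilde{q}_{X,Y} = T_{Y,X}\circ q_{X,Y}\circ T_{X,Y}^{-1} = q^T_{X,Y}
\]
is a quasi-braiding on ${\cal C}$, which is precisely the claim. The unit condition $q^T_{I,I}=id_I$ is automatic from $T_{I,I}=id_I$ and $q_{I,I}=id_I$.

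There is really no obstacle here: the work has already been done in Propositions \ref{compmon} and in the closure of laycles under inversion. Had one preferred a direct verification of axiom (\ref{quasico}) for $q^T$, the argument would mirror the proof of Proposition \ref{conjug}, exchanging the braiding hexagon relations (\ref{braid1})--(\ref{braid2}) for the single quasi-braiding hexagon (\ref{quasico}) and invoking the naturality of $q$ with respect to the morphisms $T_{X,Y}^{-1}$ together with the laycle identity (\ref{c1}); but the factoring above avoids repeating that computation.
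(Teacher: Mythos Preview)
Your proof is correct and is exactly the argument the paper has in mind: the corollary is stated without proof immediately after Proposition \ref{compmon}, and the intended derivation is precisely the two-step application of that proposition (together with closure of laycles under inversion) that you spell out.
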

\begin{remark}
Proposition \ref{trivact} is also true with quasi-braidings instead of  
braidings, so we obtain also an action of $H^2_L({\cal C})$ on the set of 
quasi-braidings of ${\cal C}$. 
\end{remark}
\begin{proposition}
Let ${\cal C}$ be a monoidal category, $c$ a braiding and $q$ a 
quasi-braiding on ${\cal C}$. Then the family $c^q_{X, Y}:=q_{Y, X}^{-1}
\circ c_{Y, X}\circ q_{X, Y}$ is also a braiding on ${\cal C}$. 
Moreover, the braided categories $({\cal C}, c)$ and $({\cal C}, c^q)$ are 
braided equivalent. 
\end{proposition}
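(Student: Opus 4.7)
The plan is to reduce this proposition to Propositions~\ref{compmon}, \ref{conjug}, and \ref{equival} by recognizing the conjugation by $q$ as conjugation by a suitable laycle. Since every braiding is in particular a quasi-braiding, Proposition~\ref{compmon} applied to the pair of quasi-braidings $(p,q):=(c,q)$ shows that the family $L_{X,Y}:=c_{Y,X}\circ q_{X,Y}:X\otimes Y\to X\otimes Y$ is a laycle on ${\cal C}$; since the inverse of a laycle is itself a laycle (as observed in the remark identifying laycles with monoidal structures on the identity functor), the family $T_{X,Y}:=L_{X,Y}^{-1}=q_{X,Y}^{-1}\circ c_{Y,X}^{-1}$ is also a laycle on ${\cal C}$.

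The crux is then the identification $c^q=c^T$ in the notation of Proposition~\ref{conjug}. A one-line expansion gives
\[
c^T_{X,Y}=T_{Y,X}\circ c_{X,Y}\circ T_{X,Y}^{-1}=q_{Y,X}^{-1}\circ c_{X,Y}^{-1}\circ c_{X,Y}\circ c_{Y,X}\circ q_{X,Y}=q_{Y,X}^{-1}\circ c_{Y,X}\circ q_{X,Y}=c^q_{X,Y},
\]
where the cancellation $c_{X,Y}^{-1}\circ c_{X,Y}=id_{X\otimes Y}$ does all the work. With this identification in hand, Proposition~\ref{conjug} immediately yields that $c^q$ is a braiding on ${\cal C}$, and Proposition~\ref{equival} provides the desired braided monoidal equivalence between $({\cal C},c)$ and $({\cal C},c^T)=({\cal C},c^q)$. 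The only real obstacle is spotting the right laycle $T$; once this is in place everything is formal, and there is no need to verify the braiding hexagons for $c^q$ by hand.
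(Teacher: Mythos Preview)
Your proof is correct and follows essentially the same approach as the paper: reduce to Proposition~\ref{conjug} (and Proposition~\ref{equival}) by exhibiting a laycle $T$ with $c^q=c^T$. The only difference is the specific laycle chosen: the paper takes $T_{X,Y}=q_{X,Y}^{-1}\circ c_{X,Y}$ (using that $p_{X,Y}:=q_{Y,X}^{-1}$ is again a quasi-braiding, so $T_{X,Y}=p_{Y,X}\circ c_{X,Y}$ is a laycle directly by Proposition~\ref{compmon}), whereas you take $T_{X,Y}=q_{X,Y}^{-1}\circ c_{Y,X}^{-1}$; both choices yield the same cancellation and the same $c^T=c^q$.
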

\begin{proof}
Follows immediately from Proposition \ref{conjug}, since $c^q=c^T$, 
where $T$ is the laycle $T_{X, Y}=q_{X, Y}^{-1}\circ c_{X, Y}$.
\end{proof}
\begin{remark}
For the particular case when $q$ itself is a braiding, we will obtain an  
alternative proof in Proposition \ref{consteor}.
\end{remark}

Let now ${\cal C}$ be a small monoidal category. We denote by 
${\mb Z}^2({\cal C})$ the set of all natural isomorphisms in ${\cal C}$ 
that are laycles or quasi-braidings. Then, with notation as in 
Proposition \ref{lazyco}, we have:  
\begin{proposition} 
(i) ${\mb Z}^2({\cal C})$ is a group.\\
(ii) $Z_L^2({\cal C})$ is an index 2 subgroup in ${\mb Z}^2({\cal C})$.\\
(iii) $B_L^2({\cal C})$ is a central subgroup in ${\mb Z}^2({\cal C})$. 

We define the ``cohomology group''    
${\mb H}^2({\cal C}):={\mb Z}^2({\cal C})/B_L^2({\cal C})$. 
\end{proposition}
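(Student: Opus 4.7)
The plan is to exhibit an explicit group law on $\mathbb{Z}^2(\mathcal{C})$ that specializes to composition on $Z_L^2(\mathcal{C})$ and to the formulas of Proposition~\ref{compmon} otherwise, and then to deduce (ii) and (iii) from the resulting structure. For $a,b\in\mathbb{Z}^2(\mathcal{C})$ I would set $(a\cdot b)_{X,Y}:=a_{X,Y}\circ b_{X,Y}$ when $b$ is a laycle, and $(a\cdot b)_{X,Y}:=a_{Y,X}\circ b_{X,Y}$ when $b$ is a quasi-braiding. Proposition~\ref{compmon} (together with closure of laycles under composition, which follows from the monoidal-functor viewpoint recorded earlier in the section) shows that $a\cdot b$ again lies in $\mathbb{Z}^2(\mathcal{C})$, with a $\mathbb{Z}/2$-grading: laycle times laycle and quasi-braiding times quasi-braiding give laycles, while the mixed products are quasi-braidings. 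Associativity comes down to the heuristic ``apply $c$, then $b$, then $a$'' and is verified case by case; the identity laycle $\mathrm{id}_{X\otimes Y}$ is a two-sided unit; inverses exist because the inverse of a laycle is a laycle, and because for a quasi-braiding $q$ the family $q^{-1}_{Y,X}$ is itself a quasi-braiding (Remark following the definition of quasi-braiding) which cancels $q$ on both sides under $\cdot$.

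Part (ii) is then immediate from the grading: $Z_L^2(\mathcal{C})$ is closed under $\cdot$, closed under inverses and contains the unit, hence is a subgroup; since the product of any two quasi-braidings is a laycle, the quasi-braidings form at most one coset, giving index at most $2$, and exactly $2$ as soon as $\mathcal{C}$ admits one quasi-braiding.

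For (iii), the subgroup $B_L^2(\mathcal{C})$ is already central in $Z_L^2(\mathcal{C})$ by Proposition~\ref{lazyco}(iii), so only centrality against quasi-braidings needs checking. Given $R\in Reg^1_L(\mathcal{C})$ and a quasi-braiding $q$, I would combine the two presentations of $D^1(R)$ recorded in \eqref{D1R} with two applications of naturality: first the naturality of $q$ in both arguments applied to the endomorphisms $R_X,R_Y$, which yields $(R_Y\otimes R_X)\circ q_{X,Y}=q_{X,Y}\circ(R_X\otimes R_Y)$; second the naturality of the family $R$ applied to the morphism $q_{X,Y}\colon X\otimes Y\to Y\otimes X$, which yields $R^{-1}_{Y\otimes X}\circ q_{X,Y}=q_{X,Y}\circ R^{-1}_{X\otimes Y}$. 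Chaining these on $(D^1(R)\cdot q)_{X,Y}=R^{-1}_{Y\otimes X}\circ(R_Y\otimes R_X)\circ q_{X,Y}$ transforms it into $q_{X,Y}\circ R^{-1}_{X\otimes Y}\circ(R_X\otimes R_Y)=(q\cdot D^1(R))_{X,Y}$, as required.

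The only real obstacle is bookkeeping: the type-switching rule for $\cdot$ must be tracked carefully throughout the verifications of closure, associativity and of the inverse in the quasi-braiding case, and part (iii) needs the short naturality computation sketched above; nothing in the argument is genuinely deep.
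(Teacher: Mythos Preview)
Your proposal is correct and follows essentially the same route as the paper: the multiplication you write down coincides verbatim with the paper's four formulae, and your appeal to Proposition~\ref{compmon} for closure and to the $\mathbb{Z}/2$-grading for (ii) is exactly what the paper does. You in fact give more than the paper, which dismisses (iii) as ``an easy computation'' --- your two-step naturality argument for $D^1(R)\cdot q=q\cdot D^1(R)$ is the right one --- and your caveat that index $2$ presupposes the existence of at least one quasi-braiding is a legitimate sharpening of the stated claim.
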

\begin{proof}
We give first the explicit description of the  multiplication in 
${\mb Z}^2({\cal C})$. Take $R$ and $P$ quasi-braidings,   
$S$ and $T$ laycles. We have, for all $U, V\in {\cal C}$: 
\begin{eqnarray*}
&&(ST)_{U,V}=S_{U,V}\circ T_{U,V}, \\ 
&&(TR)_{U,V}=T_{V,U}\circ R_{U,V}, \\
&&(RT)_{U,V}=R_{U,V}\circ T_{U,V}, \\
&&(RP)_{U,V}=R_{V,U}\circ P_{U,V}. 
\end{eqnarray*}
Now (i) and (ii) follow from Proposition \ref{compmon}, while (iii) 
is just an easy computation.
\end{proof}

Similarly, if $H$ is a Hopf algebra, we may consider the group 
${\mb Z}^2(H)$ consisting of the elements in $H\otimes H$ that are 
lazy twists or quasi-coboundaries, its central subgroup $B^2_{LT}(H)$  
and the ``cohomology group'' ${\mb H}^2(H)={\mb Z}^2(H)/B^2_{LT}(H)$.

\begin{example}{\em Let $k$ be a field with $char(k) \neq 2$ and  
$H=k[C_2]$, the group algebra of the cyclic group with 
two elements $C_2$ (denote its generator by $g$). One can see that the lazy  
twists on $H$ are given by the formula 
$T_a=\frac{3+a}{4}(1\otimes 1)+\frac{1-a}{4}(1\otimes g) 
+\frac{1-a}{4}(g\otimes 1)-\frac{1-a}{4}(g\otimes g)$, with 
$a\in k^*$. It is interesting to note that $T_0$ is not invertible but has 
all the other properties in the definition of a lazy twist. 

Consider the element 
$\theta_{\alpha}=\frac{1+g}{2}+\alpha\frac{1-g}{2}\in H$,    
with $\alpha \in k$. One can see that $\theta_{\alpha}$ is invertible 
if and only if $\alpha\neq 0$. Also it is easy to see that 
$T_{\alpha^{-2}}=\Delta(\theta_{\alpha})
(\theta_{\alpha}^{-1}\otimes \theta_{\alpha}^{-1})$ and so 
$T_a$ is trivial in ${\mb H}^2(H)$ if and only if $a\in (k^*)^2$. 
One can also note that $T_aT_b=T_{ab}$. 

Since $H$ is commutative and cocommutative, one can see that the 
quasi-coboundaries for $H$ are given by the formula 
$R_a=\frac{3+a}{4}(1\otimes 1)+\frac{1-a}{4}(1\otimes g) 
+\frac{1-a}{4}(g\otimes 1)-\frac{1-a}{4}(g\otimes g)$, with $a\in k^*$. 
Among these, only $R_1$ and $R_{-1}$ are quasitriangular. 
If we put everything together we obtain  
${\mb H}^2(H)=k^*/(k^*)^2\times C_2$.}
\end{example} 
\section{Strong twines and pseudosymmetric braidings}
\setcounter{equation}{0}
${\;\;\;\;\;}$
A key result for this section is the following characterization   
of strong twines:
\begin{proposition} \label{strongequal}
Let ${\cal C}$ be a monoidal category and $T$ a laycle on  
${\cal C}$. Then $T$ is a strong twine if and only if the families 
$T^b$ and $T^f$  
given by (\ref{ela}) and (\ref{fla}) coincide. 
\end{proposition}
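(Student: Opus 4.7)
The plan is to recast both conditions as commutation relations among four morphisms: $A:=T_{X,Y}\otimes id_Z$, $B:=id_X\otimes T_{Y,Z}$, $T_{X\otimes Y,Z}$ and $T_{X,Y\otimes Z}$. By (\ref{str3}), $T$ is a strong twine precisely when $A\circ B=B\circ A$ for all $X,Y,Z$. Using the first equalities in (\ref{ela}) and (\ref{fla}), the identity $T^b_{X,Y,Z}=T^f_{X,Y,Z}$ unfolds to $B^{-1}\circ T_{X\otimes Y,Z}=T_{X\otimes Y,Z}\circ B^{-1}$, which is equivalent to the commutation $B\circ T_{X\otimes Y,Z}=T_{X\otimes Y,Z}\circ B$; the second equalities in (\ref{ela}) and (\ref{fla}) give the symmetric reformulation $A\circ T_{X,Y\otimes Z}=T_{X,Y\otimes Z}\circ A$.

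The key auxiliary tools will be the laycle axiom (\ref{c1}), which I abbreviate as $(*)$: $A\circ T_{X\otimes Y,Z}=B\circ T_{X,Y\otimes Z}$, together with the naturality consequences (\ref{lac1}) and (\ref{lac2}) stating that $A$ already commutes with $T_{X\otimes Y,Z}$ and $B$ already commutes with $T_{X,Y\otimes Z}$. These naturality identities hold for any family of natural isomorphisms and do not rely on $T$ being a strong twine.

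For the $(\Rightarrow)$ direction, assuming $AB=BA$, I would solve $(*)$ for $T_{X\otimes Y,Z}=A^{-1}\circ B\circ T_{X,Y\otimes Z}$ and compute $B\circ T_{X\otimes Y,Z}=(BA^{-1})\circ B\circ T_{X,Y\otimes Z}=A^{-1}\circ B^2\circ T_{X,Y\otimes Z}=A^{-1}\circ B\circ T_{X,Y\otimes Z}\circ B=T_{X\otimes Y,Z}\circ B$, using $BA^{-1}=A^{-1}B$ at the second equality and (\ref{lac2}) at the third. For the converse, assuming $B\circ T_{X\otimes Y,Z}=T_{X\otimes Y,Z}\circ B$, I would compute $(AB)\circ T_{X\otimes Y,Z}=A\circ T_{X\otimes Y,Z}\circ B=B\circ T_{X,Y\otimes Z}\circ B=B^2\circ T_{X,Y\otimes Z}=B\circ A\circ T_{X\otimes Y,Z}=(BA)\circ T_{X\otimes Y,Z}$, invoking $(*)$ twice and (\ref{lac2}) in the middle, then cancel the invertible $T_{X\otimes Y,Z}$ to obtain $AB=BA$.

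There is no substantial obstacle once the abbreviations are in place; the whole argument is formal manipulation of compositions. The only step requiring attention is to invoke the \emph{naturality} identity (\ref{lac2}) to commute $B$ past $T_{X,Y\otimes Z}$ at the appropriate point, rather than mistakenly appealing to the strong twine axiom itself (which would render the converse circular).
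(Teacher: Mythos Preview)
Your proposal is correct and follows essentially the same approach as the paper: both arguments reduce the claim to formal manipulations using the laycle axiom (\ref{c1}) together with the naturality identities (\ref{lac1})--(\ref{lac2}), and then cancel an invertible factor. Your explicit abbreviations $A,B$ and framing via commutation relations make the structure a bit more transparent, but the underlying steps---in particular the converse, where the paper checks directly that $T^b_{X,Y,Z}\circ(T_{X,Y}\otimes id_Z)\circ(id_X\otimes T_{Y,Z})=T^f_{X,Y,Z}\circ(id_X\otimes T_{Y,Z})\circ(T_{X,Y}\otimes id_Z)$ and cancels---are the same.
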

\begin{proof}
Let $X, Y, Z\in {\cal C}$ and assume that $T$ is a strong twine; 
then we have: 
\begin{eqnarray*}
T^b_{X, Y, Z}&=&(id_X\otimes T_{Y, Z}^{-1})\circ T_{X\otimes Y, Z}\\
&=&(T_{X, Y}^{-1}\otimes id_Z)\circ (T_{X, Y}\otimes id_Z)\circ 
(id_X\otimes T_{Y, Z}^{-1})\circ T_{X\otimes Y, Z}\\
&\overset{(\ref{str3})}{=}&(T_{X, Y}^{-1}\otimes id_Z)\circ 
(id_X\otimes T_{Y, Z}^{-1})\circ (T_{X, Y}\otimes id_Z)\circ 
T_{X\otimes Y, Z}\\
&\overset{(\ref{str2})}{=}&(T_{X, Y}^{-1}\otimes id_Z)\circ 
(id_X\otimes T_{Y, Z}^{-1})\circ (id_X\otimes T_{Y, Z})\circ 
T_{X, Y\otimes Z}\\
&=&(T_{X, Y}^{-1}\otimes id_Z)\circ T_{X, Y\otimes Z}\\
&=&T^f_{X, Y, Z}.
\end{eqnarray*}
Conversely, assume that $T^b=T^f$. By using (\ref{ela}), (\ref{c1}) and 
(\ref{fla}) it is easy to see that $T^b_{X, Y, Z}\circ  
(T_{X, Y}\otimes id_Z)\circ (id_X\otimes T_{Y, Z})=T^f_{X, Y, Z}\circ 
(id_X\otimes T_{Y, Z})\circ (T_{X, Y}\otimes id_Z)$, and since $T^b=T^f$ 
it follows that (\ref{str3}) holds.  
\end{proof}
\begin{definition} (\cite{street})  
Let ${\cal C}$ be a monoidal category. A D-structure on 
${\cal C}$ consists of a family of natural morphisms $R_X:X\rightarrow X$  
in ${\cal C}$, such that $R_I=id_I$ and (for all $X, Y, Z\in {\cal C}$):
\begin{eqnarray}
&&(R_{X\ot Y}\ot id_Z)(id_X\ot R_{Y\ot Z})=(id_X\ot R_{Y\ot Z})
(R_{X\ot Y}\ot id_Z). \label{formD}
\end{eqnarray}
\end{definition}

It was proved in \cite{psvo} that if $R$ is a $D$-structure consisting of 
isomorphisms then the family $D^1(R)$ given by (\ref{D1R}) is a 
strong twine. Using Proposition \ref{strongequal} we can prove the 
converse:
\begin{proposition}
Let ${\cal C}$ be a monoidal category and $R_X:X\rightarrow X$ a family 
of natural isomorphisms in ${\cal C}$ with $R_I=id_I$. Then  
$D^1(R)$ is a strong twine if and only if $R$ is a $D$-structure. 
\end{proposition}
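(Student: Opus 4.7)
\medskip
\noindent\textit{Proof plan.} The direction ``$R$ a $D$-structure $\Rightarrow$ $D^1(R)$ a strong twine'' is exactly the result from \cite{psvo} recalled just before the statement, so the content is the converse. Assume therefore that $T:=D^1(R)$ is a strong twine. Since $T$ is in particular a laycle (by the proposition giving (\ref{D1R})), Proposition \ref{strongequal} gives the key reformulation: we have $T^b_{X,Y,Z}=T^f_{X,Y,Z}$ for every $X,Y,Z\in{\cal C}$, and the whole task is to show that this identity is equivalent to the $D$-structure axiom (\ref{formD}).

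The plan is to expand $T^b$ and $T^f$ using both presentations of $D^1(R)$, namely $T_{X,Y}=R_{X\otimes Y}^{-1}\circ(R_X\otimes R_Y)=(R_X\otimes R_Y)\circ R_{X\otimes Y}^{-1}$ and the analogous formulas for $T_{X,Y}^{-1}$. Using the definitions (\ref{ela})--(\ref{fla}) and substituting these expressions, one obtains
\begin{eqnarray*}
T^f_{X,Y,Z}&=&(R_{X\otimes Y}\otimes R_Z)\circ R_{X\otimes Y\otimes Z}^{-1}\circ(id_X\otimes R_{Y\otimes Z})\circ(id_X\otimes R_Y^{-1}\otimes R_Z^{-1}),\\
T^b_{X,Y,Z}&=&(id_X\otimes R_{Y\otimes Z})\circ(id_X\otimes R_Y^{-1}\otimes R_Z^{-1})\circ(R_{X\otimes Y}\otimes R_Z)\circ R_{X\otimes Y\otimes Z}^{-1}.
\end{eqnarray*}
The routine (but essential) observation is that naturality of $R$ applied to any endomorphism $f:W\to W$ gives $R_W\circ f=f\circ R_W$, so $R_{X\otimes Y\otimes Z}$ (and its inverse) commutes past every one of the three endomorphisms $(R_{X\otimes Y}\otimes R_Z)$, $(id_X\otimes R_{Y\otimes Z})$ and $(id_X\otimes R_Y^{-1}\otimes R_Z^{-1})$ of $X\otimes Y\otimes Z$; similarly, naturality of $R$ applied to $id_X\otimes R_Y^{-1}:X\otimes Y\to X\otimes Y$ shows that $(R_{X\otimes Y}\otimes R_Z)$ commutes with $(id_X\otimes R_Y^{-1}\otimes R_Z^{-1})$.

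After these moves, the identity $T^b=T^f$ is converted into the equation
\[
(R_{X\otimes Y}\otimes R_Z)\circ(id_X\otimes R_{Y\otimes Z})=(id_X\otimes R_{Y\otimes Z})\circ(R_{X\otimes Y}\otimes R_Z).
\]
Finally, writing $R_{X\otimes Y}\otimes R_Z=(R_{X\otimes Y}\otimes id_Z)\circ(id_{X\otimes Y}\otimes R_Z)$ and observing, again by naturality, that $(id_{X\otimes Y}\otimes R_Z)$ is an invertible endomorphism commuting with $(id_X\otimes R_{Y\otimes Z})$, this last equation reduces, after cancelling the common invertible factor, to exactly (\ref{formD}). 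The main obstacle is purely bookkeeping: keeping track of which endomorphism commutes past which, and in what order, so that all the $R$'s coming from $D^1(R)$ disappear except for the two appearing in the $D$-structure condition; there is no conceptual subtlety beyond the systematic use of naturality.
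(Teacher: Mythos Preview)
Your proposal is correct and follows essentially the same route as the paper: both arguments reduce, via Proposition \ref{strongequal}, to showing that $T^b_{X,Y,Z}=T^f_{X,Y,Z}$ for $T=D^1(R)$ is equivalent to (\ref{formD}), and both achieve this by expanding the definitions and repeatedly using naturality of $R$ to strip off the extraneous factors. The only cosmetic difference is that the paper simplifies the $R_Z$ and $R_Z^{-1}$ factors earlier, arriving directly at the normal forms
\[
T^b_{X,Y,Z}=(id_X\otimes R_Y^{-1}\otimes id_Z)\circ (id_X\otimes R_{Y\otimes Z})\circ (R_{X\otimes Y}\otimes id_Z)\circ R_{X\otimes Y\otimes Z}^{-1}
\]
and the analogous expression for $T^f$ with the two middle factors swapped, from which the equivalence with (\ref{formD}) is immediate; you keep the $R_Z$ factor longer and cancel it at the end, but the substance is identical.
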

\begin{proof}
We compute:
\begin{eqnarray*}
D^1(R)^b_{X, Y, Z}&=&(id_X\otimes R_{Y\otimes Z})\circ 
(id_X\otimes R_Y^{-1}\otimes R_Z^{-1})\circ (R_{X\otimes Y}\otimes R_Z)\circ 
R_{X\otimes Y\otimes Z}^{-1}\\
&=&(id_X\otimes R_{Y\otimes Z})\circ 
(id_X\otimes R_Y^{-1}\otimes R_Z^{-1})\circ (id_X\otimes R_Y\otimes R_Z)\\
&&\circ (id_X\otimes R_Y^{-1}\otimes id_Z)
\circ (R_{X\otimes Y}\otimes id_Z)\circ R_{X\otimes Y\otimes Z}^{-1}\\
&=&(id_X\otimes R_{Y\otimes Z})
\circ (id_X\otimes R_Y^{-1}\otimes id_Z)
\circ (R_{X\otimes Y}\otimes id_Z)\circ R_{X\otimes Y\otimes Z}^{-1}\\
&\overset{naturality \;of\; R}{=}&
(id_X\otimes R_Y^{-1}\otimes id_Z)\circ (id_X\otimes R_{Y\otimes Z})
\circ (R_{X\otimes Y}\otimes id_Z)\circ R_{X\otimes Y\otimes Z}^{-1}, 
\end{eqnarray*}
and similarly one can see that 
\begin{eqnarray*}
D^1(R)^f_{X, Y, Z}&=&
(id_X\otimes R_Y^{-1}\otimes id_Z)\circ (R_{X\otimes Y}\otimes id_Z) 
\circ (id_X\otimes R_{Y\otimes Z})\circ R_{X\otimes Y\otimes Z}^{-1}, 
\end{eqnarray*}
and it is clear that $D^1(R)^b=D^1(R)^f$ (i.e. $D^1(R)$ is a strong twine) 
if and only if (\ref{formD}) holds. 
\end{proof}
 
We recall that a (generalized) double braiding is always a twine; 
it is natural to ask under what conditions is it a strong twine. The answer 
is provided by our next result: 
\begin{theorem}
Let ${\cal C}$ be a monoidal category, $c$ and $d$ braidings on 
${\cal C}$ and $T_{X, Y}=d_{Y, X}\circ c_{X, Y}$. Then $T$ is a strong twine 
if and only if the following relation holds, for all $X, Y, Z\in {\cal C}$:
\begin{multline}
(d_{Z, X}\otimes id_Y)\circ (id_Z\otimes c_{X, Y}^{-1})\circ  
(c_{Y, Z}\otimes id_X)\\
\circ (d_{Z, Y}\otimes id_X)\circ (id_Z\otimes c_{X, Y})\circ  
(c_{X, Z}\otimes id_Y)\\
=(id_X\otimes c_{Y, Z})\circ (d_{X, Y}^{-1}\otimes id_Z)
\circ (id_Y\otimes d_{Z, X})\\
\circ (id_Y\otimes c_{X, Z})\circ (d_{X, Y}\otimes id_Z)
\circ (id_X\otimes d_{Z, Y}). \label{braidtare}
\end{multline}
\end{theorem}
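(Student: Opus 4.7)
The plan is to invoke Proposition~\ref{strongequal}. Since every generalized double braiding is a twine and hence a laycle, $T$ is a strong twine if and only if $T^b_{X,Y,Z}=T^f_{X,Y,Z}$ for all $X,Y,Z\in{\cal C}$.

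From formulas (\ref{ela}) and (\ref{fla}), $T^b=(id_X\otimes T_{Y,Z}^{-1})\circ T_{X\otimes Y,Z}$ and $T^f=T_{X\otimes Y,Z}\circ (id_X\otimes T_{Y,Z}^{-1})$, so $T^b=T^f$ is equivalent to $T_{X\otimes Y,Z}$ commuting with $id_X\otimes T_{Y,Z}$. Expanding
\[T_{X\otimes Y,Z}=(id_X\otimes d_{Z,Y})(d_{Z,X}\otimes id_Y)(c_{X,Z}\otimes id_Y)(id_X\otimes c_{Y,Z})\]
via the hexagon axioms (\ref{braid1}) and (\ref{braid2}) for $c$ and $d$, together with $id_X\otimes T_{Y,Z}=(id_X\otimes d_{Z,Y})(id_X\otimes c_{Y,Z})$, and cancelling the outer factors $(id_X\otimes d_{Z,Y})$ on the left and $(id_X\otimes c_{Y,Z})$ on the right of the commutation, one obtains the equivalent condition
\begin{equation*}
(T_{X,Z}\otimes id_Y)\circ\bigl(id_X\otimes (c_{Y,Z}\circ d_{Z,Y})\bigr)=\bigl(id_X\otimes (c_{Y,Z}\circ d_{Z,Y})\bigr)\circ (T_{X,Z}\otimes id_Y) \qquad (*)
\end{equation*}
as endomorphisms of $X\otimes Z\otimes Y$.

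The remaining task, which is the main technical obstacle, is to recognize $(*)$ as (\ref{braidtare}). For the left-hand side of (\ref{braidtare}), I would merge the rightmost two factors into $c_{X,Z\otimes Y}=(id_Z\otimes c_{X,Y})(c_{X,Z}\otimes id_Y)$ via (\ref{braid1}) and group the middle pair as $(c_{Y,Z}\circ d_{Z,Y})\otimes id_X$; the naturality of $c$ applied to the endomorphism $c_{Y,Z}\circ d_{Z,Y}$ of $Z\otimes Y$ then moves this endomorphism past $c_{X,Z\otimes Y}$, after which the pair $(id_Z\otimes c_{X,Y}^{-1})(id_Z\otimes c_{X,Y})$ cancels and the left-hand side of $(*)$ emerges. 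Symmetrically, for the right-hand side of (\ref{braidtare}), I would group the middle pair into $id_Y\otimes T_{X,Z}$, use (\ref{braid2}) for $d$ to assemble $d_{X\otimes Z,Y}=(d_{X,Y}\otimes id_Z)(id_X\otimes d_{Z,Y})$, and apply the naturality of $d$ with respect to the endomorphism $T_{X,Z}$ of $X\otimes Z$, namely $d_{X\otimes Z,Y}\circ (T_{X,Z}\otimes id_Y)=(id_Y\otimes T_{X,Z})\circ d_{X\otimes Z,Y}$; cancellation then yields the right-hand side of $(*)$. Since each rewrite is invertible, (\ref{braidtare}) is equivalent to $(*)$, and hence to $T$ being a strong twine.
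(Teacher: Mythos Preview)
Your proof is correct and follows the same overall strategy as the paper: both reduce the question to Proposition~\ref{strongequal} (i.e.\ $T^b=T^f$), massage this into a simpler intermediate identity via the hexagon axioms, and then identify that identity with (\ref{braidtare}) using naturality of the braidings. The only real difference is the choice of intermediate form: the paper simplifies $T^b$ and $T^f$ separately to reach the condition
\[
(d_{Z,X}\otimes id_Y)\circ c_{X\otimes Y,Z}\circ(id_X\otimes d_{Z,Y})
=(id_X\otimes c_{Y,Z})\circ d_{Z,X\otimes Y}\circ(c_{X,Z}\otimes id_Y),
\]
whereas you rewrite $T^b=T^f$ as the commutation $(*)$ of $T_{X,Z}\otimes id_Y$ with $id_X\otimes(c_{Y,Z}\circ d_{Z,Y})$. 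Your commutation form is arguably cleaner and makes the passage to (\ref{braidtare}) a direct application of naturality of $c_{X,-}$ and $d_{-,Y}$; the paper's version instead invokes the quasi-braiding identity (\ref{conseqquasi}) at the corresponding step. Either way the work is equivalent.
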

\begin{proof}
We compute the families $T^b$ and $T^f$:
\begin{eqnarray*}
T^b_{X, Y, Z}&=&(id_X\otimes T_{Y, Z}^{-1})\circ T_{X\otimes Y, Z}\\
&\overset{(\ref{braid1})}{=}&
(id_X\otimes c_{Y, Z}^{-1})\circ (id_X\otimes d_{Z, Y}^{-1})\circ 
(id_X\otimes d_{Z, Y})\circ (d_{Z, X}\otimes id_Y)\circ c_{X\otimes Y, Z}\\
&=&(id_X\otimes c_{Y, Z}^{-1})\circ (d_{Z, X}\otimes id_Y)
\circ c_{X\otimes Y, Z},
\end{eqnarray*}
\begin{eqnarray*}
T^f_{X, Y, Z}&=&T_{X\otimes Y, Z}\circ (id_X\otimes T_{Y, Z}^{-1})\\
&\overset{(\ref{braid2})}{=}&
d_{Z, X\otimes Y}\circ (c_{X, Z}\otimes id_Y)\circ (id_X\otimes c_{Y, Z})
\circ (id_X\otimes c_{Y, Z}^{-1})\circ (id_X\otimes d_{Z, Y}^{-1})\\
&=&d_{Z, X\otimes Y}\circ (c_{X, Z}\otimes id_Y)
\circ (id_X\otimes d_{Z, Y}^{-1}).
\end{eqnarray*}
By Proposition \ref{strongequal}, $T$ is a strong twine if  
and only if $T^b=T^f$, and this holds if and only if 
\begin{eqnarray}
&&(d_{Z, X}\otimes id_Y)\circ c_{X\otimes Y, Z}\circ (id_X\otimes d_{Z, Y})
=(id_X\otimes c_{Y, Z})\circ d_{Z, X\otimes Y}\circ (c_{X, Z}\otimes id_Y).
\label{TBTF}
\end{eqnarray}
Thus, it is enough to prove that the left hand sides of equations 
(\ref{braidtare}) and (\ref{TBTF}) coincide, and the same for the right 
hand sides. We compute:\\[2mm]
${\;\;\;\;\;\;\;}$
$(d_{Z, X}\otimes id_Y)\circ c_{X\otimes Y, Z}\circ (id_X\otimes d_{Z, Y})$
\begin{eqnarray*}
&=&
(d_{Z, X}\otimes id_Y)\circ (id_Z\otimes c_{X, Y}^{-1})
\circ (c_{Y, Z}\otimes id_X)
\circ c_{X, Y\otimes Z}\circ (id_X\otimes d_{Z, Y})\\
&=&
(d_{Z, X}\otimes id_Y)\circ (id_Z\otimes c_{X, Y}^{-1})
\circ (c_{Y, Z}\otimes id_X)\circ (d_{Z, Y}\otimes id_X)
\circ c_{X, Z\otimes Y}\\
&=&
(d_{Z, X}\otimes id_Y)\circ (id_Z\otimes c_{X, Y}^{-1})\circ  
(c_{Y, Z}\otimes id_X)\\
&&\circ (d_{Z, Y}\otimes id_X)\circ (id_Z\otimes c_{X, Y})\circ  
(c_{X, Z}\otimes id_Y)
\end{eqnarray*} 
(for the first equality we used (\ref{conseqquasi}), for the second 
the naturality of $c$ and for the third (\ref{braid1})),\\[2mm] 
${\;\;\;\;\;\;\;}$
$(id_X\otimes c_{Y, Z})\circ d_{Z, X\otimes Y}\circ (c_{X, Z}\otimes id_Y)$
\begin{eqnarray*}
&=&(id_X\otimes c_{Y, Z})\circ (d_{X, Y}^{-1}\otimes id_Z)\circ 
(id_Y\otimes d_{Z, X})
\circ d_{Z\otimes X, Y}\circ (c_{X, Z}\otimes id_Y)\\
&=&(id_X\otimes c_{Y, Z})\circ (d_{X, Y}^{-1}\otimes id_Z)\circ 
(id_Y\otimes d_{Z, X})\circ (id_Y\otimes c_{X, Z})\circ 
d_{X\otimes Z, Y}\\
&=&(id_X\otimes c_{Y, Z})\circ (d_{X, Y}^{-1}\otimes id_Z)
\circ (id_Y\otimes d_{Z, X})\\
&&\circ (id_Y\otimes c_{X, Z})\circ (d_{X, Y}\otimes id_Z)
\circ (id_X\otimes d_{Z, Y})
\end{eqnarray*}
(for the first equality we used (\ref{conseqquasi}), for the second the 
naturality of $d$ and for the third (\ref{braid2})), finishing the proof. 
\end{proof}
\begin{definition}
Let ${\cal C}$ be a monoidal category and $c$ a braiding on ${\cal C}$. 
We will say that $c$ is a {\bf pseudosymmetry} if the following 
condition holds, for all $X, Y, Z\in {\cal C}$:
\begin{multline}
\;\;\;\;\;\;\;\;\;\;\;\;\;\;\;\;\;\;\;
(c_{Y, Z}\otimes id_X)\circ (id_Y\otimes c_{Z, X}^{-1})\circ  
(c_{X, Y}\otimes id_Z)\\
=(id_Z\otimes c_{X, Y})\circ (c_{Z, X}^{-1}\otimes id_Y)
\circ (id_X\otimes c_{Y, Z}). \label{pseudosym}
\end{multline}
In this case we will say that ${\cal C}$ is  
a {\bf pseudosymmetric braided category}.  
\end{definition}

If $c$ is a symmetry, i.e. $c_{Z, X}^{-1}=c_{X, Z}$, then obviously $c$ is a 
pseudosymmetry, by (\ref{braideq}). 
\begin{theorem} \label{carpseudosym}
Let ${\cal C}$ be a monoidal category and $c$ a braiding on ${\cal C}$. 
Then the double braiding $T_{X, Y}=c_{Y, X}\circ c_{X, Y}$ is a strong 
twine if and only if $c$ is a pseudosymmetry.  
\end{theorem}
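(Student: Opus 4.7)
My plan is to apply the previous theorem with $d=c$, so that $T=c^2$ and condition (\ref{braidtare}) specializes to an equation — call it (E) — which I must show is equivalent to the pseudosymmetry relation (\ref{pseudosym}) for every triple $X,Y,Z\in {\cal C}$. The key observation is that each side of (E), a composition of six $c$-factors, can be split as a product of three $c$-factors (the ``half-twist'' of three strands) with three more $c$-factors matching one side of (\ref{pseudosym}) after a relabeling of objects.

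Concretely, the last three factors of the LHS of (E) are $(c_{Z,Y}\otimes id_X)\circ (id_Z\otimes c_{X,Y})\circ (c_{X,Z}\otimes id_Y)$, and the last three factors of the RHS of (E) are $(id_Y\otimes c_{X,Z})\circ (c_{X,Y}\otimes id_Z)\circ (id_X\otimes c_{Z,Y})$. These are equal by applying the braid relation (\ref{braideq}) with the substitution $Y\leftrightarrow Z$; denote their common value by $\Delta\colon X\otimes Z\otimes Y\to Y\otimes Z\otimes X$. As a composite of braidings, $\Delta$ is an isomorphism.

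Next, I would observe that the first three factors of the LHS of (E), namely the morphism $(c_{Z,X}\otimes id_Y)\circ (id_Z\otimes c_{X,Y}^{-1})\circ (c_{Y,Z}\otimes id_X)\colon Y\otimes Z\otimes X\to X\otimes Z\otimes Y$, is precisely the LHS of (\ref{pseudosym}) after the relabeling $(X,Y,Z)\mapsto (Y,Z,X)$; and, similarly, the first three factors of the RHS of (E) give the RHS of (\ref{pseudosym}) under the same relabeling.

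Putting this together, (E) takes the form $L\circ \Delta = R\circ \Delta$, where $L$ and $R$ are respectively the LHS and RHS of (\ref{pseudosym}) evaluated at $(Y,Z,X)$. Invertibility of $\Delta$ allows cancellation, reducing (E) to $L=R$, which is precisely pseudosymmetry at $(Y,Z,X)$; and since $(Y,Z,X)$ ranges over all triples as $(X,Y,Z)$ does, the equivalence between (E) and (\ref{pseudosym}) holds universally in $X,Y,Z$. The only subtlety is the careful bookkeeping of strand labels when matching the factors of (E) against the braid relation and against (\ref{pseudosym}); once that is done, the equivalence follows immediately from cancelling $\Delta$.
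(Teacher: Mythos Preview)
Your proposal is correct and follows essentially the same approach as the paper: specialize the preceding theorem to $d=c$, use the braid relation (\ref{braideq}) with $Y\leftrightarrow Z$ to identify the last three factors on each side of (\ref{braidtare}) as a common invertible morphism, and cancel to obtain (\ref{pseudosym}) up to the cyclic relabeling $(X,Y,Z)\mapsto (Y,Z,X)$. The paper's proof is terser and leaves the relabeling implicit, but the argument is the same.
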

\begin{proof}
In (\ref{braidtare}) written for $c=d$ we have, by (\ref{braideq}), 
\begin{eqnarray*}
(c_{Z, Y}\otimes id_X)\circ (id_Z\otimes c_{X, Y})\circ  
(c_{X, Z}\otimes id_Y)=
(id_Y\otimes c_{X, Z})\circ (c_{X, Y}\otimes id_Z)
\circ (id_X\otimes c_{Z, Y}),  
\end{eqnarray*}
so (\ref{braidtare}) reduces in this case to (\ref{pseudosym}). 
\end{proof}

Let $H$ be a Hopf algebra. Consider the category $_H{\cal YD}^H$ of 
left-right Yetter-Drinfeld modules over $H$, whose objects are vector 
spaces $M$ that are left $H$-modules (denote the action by 
$h\otimes m\mapsto h\cdot m$) and right $H$-comodules (denote the coaction 
by $m\mapsto m_{(0)}\otimes m_{(1)}\in M\otimes H$) satisfying the 
compatibility condition 
\begin{eqnarray}
&&(h\cdot m)_{(0)}\otimes (h\cdot m)_{(1)}=h_2\cdot m_{(0)}\otimes 
h_3m_{(1)}S^{-1}(h_1), \;\;\;\forall \;\;h\in H, \;m\in M. \label{yd}
\end{eqnarray}
It is a monoidal category, with tensor product given by 
\begin{eqnarray*}
&&h\cdot (m\otimes n)=h_1\cdot m\otimes h_2\cdot n, \;\;\;
(m\otimes n)_{(0)}\otimes (m\otimes n)_{(1)}=m_{(0)}\otimes n_{(0)}
\otimes n_{(1)}m_{(1)}.
\end{eqnarray*}
Moreover, it has a (canonical) braiding given by 
\begin{eqnarray*}
&&c_{M, N}:M\otimes N\rightarrow N\otimes M, \;\;\;c_{M, N}(m\otimes n)=
n_{(0)}\otimes n_{(1)}\cdot m, \\
&&c_{M, N}^{-1}:N\otimes M\rightarrow M\otimes N, \;\;\;
c_{M, N}^{-1}(n\otimes m)=S(n_{(1)})\cdot m\otimes n_{(0)}. 
\end{eqnarray*}

It is known (cf. \cite{pareigis}) that this braiding is a symmetry 
only in the degenerate case $H=k$. 
\begin{theorem} \label{co}
The canonical braiding of $_H{\cal YD}^H$ is pseudosymmetric if and only if 
$H$ is commutative and cocommutative. 
\end{theorem}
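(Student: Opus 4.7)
The plan is to prove the two directions of the equivalence separately.

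For the $(\Leftarrow)$ direction, I assume $H$ is commutative and cocommutative. The key observation is that under these hypotheses the Yetter--Drinfeld compatibility
\[
(h\cdot x)_{(0)}\otimes (h\cdot x)_{(1)} = h_{(2)}\cdot x_{(0)}\otimes h_{(3)}\, x_{(1)}\, S^{-1}(h_{(1)})
\]
collapses to the simpler identity $(h\cdot x)_{(0)}\otimes (h\cdot x)_{(1)} = h\cdot x_{(0)}\otimes x_{(1)}$. This will be obtained by using cocommutativity to swap the Sweedler indices $h_{(2)}\leftrightarrow h_{(3)}$ inside the sum, commutativity of $H$ to move $h_{(2)}S^{-1}(h_{(1)})$ past $x_{(1)}$, and the antipode axiom for $H^{\rm cop}$ (namely $a_{(2)}S^{-1}(a_{(1)})=\varepsilon(a)$) together with the counit axiom to collapse the remaining scalar. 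With this simplified compatibility in hand, both sides of the pseudosymmetry equation evaluated on $x\otimes y\otimes z$ reduce by direct computation to the common expression
\[
S(x_{(1)})\cdot z_{(0)}\otimes z_{(1)}\cdot y_{(0)}\otimes y_{(1)}\cdot x_{(0)},
\]
establishing pseudosymmetry.

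For the $(\Rightarrow)$ direction, I assume pseudosymmetry holds in $_H{\cal YD}^H$ and wish to deduce both commutativity and cocommutativity of $H$. The plan is to apply the pseudosymmetry identity to Yetter--Drinfeld modules whose action and coaction are genuinely coupled, then to extract algebraic relations in $H^{\otimes n}$ by evaluating on general elements $x\otimes y\otimes z$ and matching coefficients. Using only the ``basic'' YD modules $H^{\rm reg}$ (with $h\cdot g=hg$ and trivial coaction) and $H^{\rm cocomm}$ (with trivial action and $\delta(g)=g_{(1)}\otimes g_{(2)}$), as well as any tensor products thereof, yields no information, since in these the action and coaction operate on disjoint tensor factors, the braiding degenerates to a flip-like map, and pseudosymmetry is automatic. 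One must therefore work with a coupled YD structure---for instance, $H$ equipped with left multiplication action together with a twisted right coaction such as $\delta(g)=g_{(2)}\otimes g_{(3)}S^{-1}(g_{(1)})$; equivalently, when $H$ is finite-dimensional, one may translate the problem via the equivalence $_H{\cal YD}^H\cong \,_{D(H)}{\cal M}$ and study the pseudotriangularity $R_{12}R_{31}^{-1}R_{23}=R_{23}R_{31}^{-1}R_{12}$ of the $R$-matrix of the Drinfeld double $D(H)$. Expanding the pseudosymmetry identity on $x\otimes y\otimes z\in X^{\otimes 3}$ for such a module and specializing to elements that probe the product and coproduct of $H$ will yield identities that force $ab=ba$ and $\Delta(h)=\Delta^{\rm cop}(h)$.

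The principal obstacle is precisely the choice of a single coupled YD module on which the pseudosymmetry identity separates cleanly into conditions of commutativity and cocommutativity. One must first verify that the proposed twisted YD structure is well-defined (checking YD compatibility via the antipode axioms), then carefully expand the pseudosymmetry equation, tracking how the antipode appearing in $c_{Z,X}^{-1}$ produces commutator terms $[a,b]$ while iterated applications of the braiding produce cocommutator terms $\Delta(h)-\Delta^{\rm cop}(h)$. The delicate part lies in arranging the computation so that the obstructions to commutativity and cocommutativity appear as independently vanishing components of the final identity in $H^{\otimes 3}$, rather than being tangled together.
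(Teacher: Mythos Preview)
Your $(\Leftarrow)$ direction is correct and essentially identical to the paper's argument: reduce the Yetter--Drinfeld compatibility to the Long condition $(h\cdot m)_{(0)}\otimes (h\cdot m)_{(1)}=h\cdot m_{(0)}\otimes m_{(1)}$, then verify both sides of the pseudosymmetry equation equal $S(x_{(1)})\cdot z_{(0)}\otimes z_{(1)}\cdot y_{(0)}\otimes y_{(1)}\cdot x_{(0)}$.

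Your $(\Rightarrow)$ direction, however, is not a proof but a plan with an explicitly unresolved obstacle. You correctly diagnose that the decoupled modules $H^{\rm reg}$ and $H^{\rm cocomm}$ carry no information, and you even write down the right coupled coaction $\delta(g)=g_{(2)}\otimes g_{(3)}S^{-1}(g_{(1)})$. But you then stop, saying the ``principal obstacle'' is choosing a module on which commutativity and cocommutativity separate cleanly. That obstacle is the entire content of this direction, and you have not overcome it.

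The paper's resolution is to use \emph{two} coupled modules rather than one. Define $H_1$ to be $H$ with regular left action and coaction $\rho_1(h)=h_2\otimes h_3S^{-1}(h_1)$, and $H_2$ to be $H$ with adjoint action $h\cdot g=h_2gS^{-1}(h_1)$ and regular coaction $\rho_2(h)=h_1\otimes h_2$. Cocommutativity is extracted first, by applying pseudosymmetry with $X=H_1$, $Y=H_2$, $Z=H_1$ to the element $1\otimes h\otimes 1$; after applying $id\otimes\varepsilon\otimes id$ and a convolution trick one obtains $S(h_2)\otimes h_1=S(h_1)\otimes h_2$. Commutativity is then extracted by applying pseudosymmetry with $X=H_1$, $Y=H_2$, $Z=H_2$ to $1\otimes g\otimes h$; using the already-established cocommutativity to simplify $c_{H_2,H_1}$, one arrives at $hg=gh$. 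The point you missed is that the two conclusions are obtained \emph{sequentially}, with different triples of modules and carefully chosen elements (many set to $1$), rather than simultaneously from a single evaluation.
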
 
\begin{proof}
Assume first that $H$ is commutative and cocommutative; in this case, 
the compatibility condition (\ref{yd}) becomes the Long condition 
\begin{eqnarray}
&&(h\cdot m)_{(0)}\otimes (h\cdot m)_{(1)}=h\cdot m_{(0)}\otimes 
m_{(1)}, \;\;\;\forall \;\;h\in H, \;m\in M. \label{long}
\end{eqnarray}
For all $X, Y, Z\in $$\;_H{\cal YD}^H$, $x\in X$, $y\in Y$, $z\in Z$ 
we compute:\\[2mm]
${\;\;\;\;\;\;\;}$
$(c_{Y, Z}\otimes id_X)\circ (id_Y\otimes c_{Z, X}^{-1})\circ  
(c_{X, Y}\otimes id_Z)(x\otimes y\otimes z)$
\begin{eqnarray*}
&=&(c_{Y, Z}\otimes id_X)\circ (id_Y\otimes c_{Z, X}^{-1})(y_{(0)}\otimes 
y_{(1)}\cdot x\otimes z)\\
&=&(c_{Y, Z}\otimes id_X)(y_{(0)}\otimes 
S((y_{(1)}\cdot x)_{(1)})\cdot z\otimes (y_{(1)}\cdot x)_{(0)})\\
&\overset{(\ref{long})}{=}&
(c_{Y, Z}\otimes id_X)(y_{(0)}\otimes 
S(x_{(1)})\cdot z\otimes y_{(1)}\cdot x_{(0)})\\
&=&(S(x_{(1)})\cdot z)_{(0)}\otimes (S(x_{(1)})\cdot z)_{(1)}\cdot y_{(0)}
\otimes y_{(1)}\cdot x_{(0)}\\
&\overset{(\ref{long})}{=}&
S(x_{(1)})\cdot z_{(0)}\otimes z_{(1)}\cdot y_{(0)}\otimes 
y_{(1)}\cdot x_{(0)}\\
&\overset{(\ref{long})}{=}&
S(x_{(1)})\cdot z_{(0)}\otimes (z_{(1)}\cdot y)_{(0)}\otimes 
(z_{(1)}\cdot y)_{(1)}\cdot x_{(0)}\\
&=&(id_Z\otimes c_{X, Y})(S(x_{(1)})\cdot z_{(0)}\otimes x_{(0)}\otimes 
z_{(1)}\cdot y)\\
&=&(id_Z\otimes c_{X, Y})\circ (c_{Z, X}^{-1}\otimes id_Y)
(x\otimes z_{(0)}\otimes z_{(1)}\cdot y)\\
&=&(id_Z\otimes c_{X, Y})\circ (c_{Z, X}^{-1}\otimes id_Y)\circ (id_X\otimes 
c_{Y, Z})(x\otimes y\otimes z),
\end{eqnarray*}
proving that $c$ is pseudosymmetric. 

Conversely, assume that $c$ is pseudosymmetric. We consider the two usual   
Yetter-Drinfeld structures on the vector space $H$: the first one, 
denoted by $H_1$, is $H$ with the usual (regular) left module structure 
and with comodule structure $\rho _1(h)=h_2\otimes h_3S^{-1}(h_1)$, 
and the second, denoted by $H_2$, is $H$ with    
module structure given by $h\cdot g=h_2gS^{-1}(h_1)$ and comodule structure 
$\rho _2(h)=h_1\otimes h_2$.

We prove first that $H$ is cocommutative. Let $h\in H$; we will  
apply the pseudosymmetry condition (\ref{pseudosym}) for  
$X=H_1$, $Y=H_2$, $Z=H_1$ on the element $1\otimes h\otimes 1$:\\[2mm] 
${\;\;\;\;\;\;}$
$(c_{Y,Z}\otimes id_X)\circ (id_Y\otimes c_{Z,X}^{-1})\circ 
(c_{X,Y}\otimes id_Z) (1\otimes h\otimes 1)$
\begin{eqnarray*}
&=&(c_{Y,Z}\otimes id_X)\circ (id_Y\otimes c_{Z,X}^{-1})
(h_1\otimes h_2\otimes 1)\\
&=&(c_{Y,Z}\otimes id_X)(h_1\otimes h_2S(h_4)\otimes h_3)\\
&=&(h_2S(h_4))_2\otimes [(h_2S(h_4))_3S^{-1}((h_2S(h_4))_1)]\cdot h_1
\otimes h_3, 
\end{eqnarray*}
${\;\;\;\;\;\;}$
$(id_Z \otimes c_{X,Y})\circ (c_{Z,X}^{-1}\otimes id_Y)\circ  
(id_X\otimes c_{Y,Z})(1\otimes h\otimes 1)$
\begin{eqnarray*}
&=&(id_Z \otimes c_{X,Y})\circ (c_{Z,X}^{-1}\otimes id_Y)
(1\otimes 1\otimes h)\\
&=&(id_Z \otimes c_{X,Y})(1\otimes 1\otimes h)\\
&=&1\otimes h_1\otimes h_2, 
\end{eqnarray*}
so we obtain 
\begin{eqnarray*} 
&&(h_2S(h_4))_2\otimes [(h_2S(h_4))_3S^{-1}((h_2S(h_4))_1)]\cdot h_1
\otimes h_3=1\otimes h_1\otimes h_2.
\end{eqnarray*}
By applying $id\otimes \varepsilon \otimes id$ we get 
$\;h_1S(h_3)\otimes h_2=1\otimes h,\;$  
which, by making convolution with $S(h)\otimes 1$, becomes 
$\;S(h_1)h_2S(h_4)\otimes h_3=S(h_1)\otimes h_2,\;$  
and so we obtain $\;S(h_2)\otimes h_1=S(h_1)\otimes h_2,\;$  
which implies $\Delta ^{cop}(h)=\Delta (h)$, i.e. $H$ is cocommutative.
 
We prove now that $H$ is commutative. Note first that  
cocommutativity implies $c_{H_2, H_1}(b\otimes a)=a\otimes b$, for all 
$a, b\in H$. Let now $g, h\in H$; we will apply the pseudosymmetry 
condition (\ref{pseudosym}) for $X=H_1$, $Y=H_2$, $Z=H_2$ on the element 
$1\otimes g\otimes h$:\\[2mm]
${\;\;\;\;\;\;}$
$(c_{Y,Z}\otimes id_X)\circ (id_Y\otimes c_{Z,X}^{-1})\circ 
(c_{X,Y}\otimes id_Z)(1\otimes g\otimes h)$
\begin{eqnarray*}
&=&(c_{Y,Z}\otimes id_X)\circ (id_Y\otimes c_{Z,X}^{-1})
(g_1\otimes g_2\otimes h)\\
&=&(c_{Y,Z}\otimes id_X)(g_1\otimes h\otimes g_2)\\
&=&h_1\otimes h_3g_1S^{-1}(h_2) \otimes g_2, 
\end{eqnarray*}
${\;\;\;\;\;\;}$
$(id_Z \otimes c_{X,Y})\circ (c_{Z,X}^{-1}\otimes id_Y)\circ 
(id_X\otimes c_{Y,Z})(1\otimes g\otimes h)$
\begin{eqnarray*}
&=&(id_Z \otimes c_{X,Y})\circ (c_{Z,X}^{-1}\otimes id_Y)
(1\otimes h_1\otimes h_3gS^{-1}(h_2))\\
&=&(id_Z \otimes c_{X,Y})(h_1\otimes 1\otimes h_3gS^{-1}(h_2))\\
&=&h_1\otimes (h_3gS^{-1}(h_2))_1 \otimes (h_3gS^{-1}(h_2))_2, 
\end{eqnarray*}
and so we obtain 
\begin{eqnarray*}
&&h_1\otimes h_3g_1S^{-1}(h_2)\otimes g_2=
h_1\otimes (h_3gS^{-1}(h_2))_1 \otimes (h_3gS^{-1}(h_2))_2. 
\end{eqnarray*}
By applying $id \otimes \varepsilon \otimes id$ we get 
$\;h_1\otimes h_3gS^{-1}(h_2)=h\otimes g,\;$ 
which implies 
$\;h_3gS^{-1}(h_2)h_1=gh,\;$ 
that is $hg=gh$ and hence $H$ is commutative.  
\end{proof}
\begin{corollary}
For $H$ a commutative and cocommutative Hopf algebra, the double  
braiding 
\begin{eqnarray*}
&&T_{X, Y}(x\otimes y)=(c_{Y, X}\circ c_{X, Y})(x\otimes y)=
y_{(1)}\cdot x_{(0)}\otimes x_{(1)}\cdot y_{(0)}
\end{eqnarray*}
is a strong twine on $_H{\cal YD}^H$.  
\end{corollary}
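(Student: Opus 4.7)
The plan is to assemble this corollary directly from the two theorems just proved, since it is essentially their conjunction applied to the Yetter-Drinfeld setting. The only additional content is the verification of the explicit formula for the double braiding, which depends on the cocommutativity hypothesis.

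First I would apply \thref{co}: since $H$ is commutative and cocommutative, the canonical braiding $c$ on $_H\mathcal{YD}^H$ is a pseudosymmetry. Then I would invoke \thref{carpseudosym} applied to this category and this braiding: whenever $c$ is a pseudosymmetry, the double braiding $T_{X,Y} = c_{Y,X} \circ c_{X,Y}$ is a strong twine. Combining these two facts immediately yields that $T$ is a strong twine on $_H\mathcal{YD}^H$.

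It remains only to confirm the explicit formula. Starting from $c_{X,Y}(x \otimes y) = y_{(0)} \otimes y_{(1)} \cdot x$, applying $c_{Y,X}$ gives
\begin{eqnarray*}
(c_{Y,X} \circ c_{X,Y})(x \otimes y) &=& (y_{(1)} \cdot x)_{(0)} \otimes (y_{(1)} \cdot x)_{(1)} \cdot y_{(0)}.
\end{eqnarray*}
Under cocommutativity the Yetter-Drinfeld compatibility (\ref{yd}) collapses to the Long condition (\ref{long}) used in the proof of \thref{co}, which gives $(y_{(1)} \cdot x)_{(0)} \otimes (y_{(1)} \cdot x)_{(1)} = y_{(1)} \cdot x_{(0)} \otimes x_{(1)}$, whence the displayed formula $y_{(1)} \cdot x_{(0)} \otimes x_{(1)} \cdot y_{(0)}$ follows.

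There is essentially no obstacle here: the two theorems do all the real work, and the formula is a one-line computation using the Long condition. The only thing to be careful about is simply to cite both \thref{co} (to pass from commutativity/cocommutativity to pseudosymmetry) and \thref{carpseudosym} (to pass from pseudosymmetry to the strong twine property), rather than trying to verify the strong twine axioms (\ref{str1})--(\ref{str3}) directly on the explicit formula, which would be a longer and less illuminating route.
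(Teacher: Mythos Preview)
Your proposal is correct and matches the paper's intended argument: the corollary is stated without proof precisely because it follows immediately from combining \thref{co} with \thref{carpseudosym}, and your verification of the explicit formula via the Long condition is the natural computation. One tiny wording point: the reduction of (\ref{yd}) to the Long condition (\ref{long}) uses both commutativity and cocommutativity (commutativity is needed to move $m_{(1)}$ past $S^{-1}(h_1)$), not cocommutativity alone; but since both hypotheses are in force and you cite the proof of \thref{co} where this reduction is carried out, the argument stands.
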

\begin{definition}
If $H$ is a Hopf algebra and $R\in H\otimes H$ is a quasitriangular 
structure, we will say that $R$ is {\bf pseudotriangular} if 
\begin{eqnarray}
&&R_{12}R^{-1}_{31}R_{23}=R_{23}R^{-1}_{31}R_{12}. \label{pseudoQYB}
\end{eqnarray} 
\end{definition}

If $R$ is a pseudotriangular structure then it is easy to see that the 
braiding on $_H{\cal M}$ given by $c_{M, N}: M\otimes N\rightarrow 
N\otimes M$, $c_{M, N}(m\otimes n)=R^2\cdot n\otimes R^1\cdot m$, is 
pseudosymmetric. Also, it is obvious that if $R$ is triangular (i.e. 
$R_{21}R=1\otimes 1$) then $R$ is pseudotriangular, because in this case 
(\ref{pseudoQYB}) becomes the quantum Yang-Baxter equation 
$R_{12}R_{13}R_{23}=R_{23}R_{13}R_{12}$. We have also the Hopf-algebraic 
counterpart of Theorem \ref{carpseudosym}:
\begin{proposition}\label{carneat}
Let $(H, R)$ be a quasitriangular Hopf algebra. Then $R$ is pseudotriangular 
if and only if the lazy twist $F=R_{21}R$ satisfies the condition 
$F_{12}F_{23}=F_{23}F_{12}$ (i.e. $F$ is {\em neat}, in the 
terminology of \cite{psvo}).
\end{proposition}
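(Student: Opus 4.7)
The plan is to derive this equivalence as the Hopf-algebraic reflection of Theorem~\ref{carpseudosym} applied to the braided monoidal category $({}_H{\cal M},c)$, where $c_{M,N}(m\ot n)=R^2\cd n\ot R^1\cd m$ is the braiding associated with $R$.

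The first ingredient is the translation between $c$ being pseudosymmetric and $R$ being pseudotriangular. One direction is already recorded in the paragraph preceding the proposition. For the converse, I would specialize the pseudosymmetry identity (\ref{pseudosym}) to the regular left $H$-modules $X=Y=Z=H$ and evaluate at $1\ot 1\ot 1$; a direct calculation yields the identity $R_{21}R_{13}^{-1}R_{32}=R_{32}R_{13}^{-1}R_{21}$ in $H^{\ot 3}$, which is the image of (\ref{pseudoQYB}) under the algebra automorphism of $H^{\ot 3}$ that swaps the first and third tensor factors, and is therefore equivalent to pseudotriangularity.

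The second ingredient is the identification of the double braiding $c^2$ with the action of $F=R_{21}R$, i.e.\ $c^2_{M,N}(m\ot n)=F\cd (m\ot n)$. Consequently, on any triple $M\ot N\ot P$ the morphisms $c^2_{M,N}\ot id_P$ and $id_M\ot c^2_{N,P}$ act as $F_{12}$ and $F_{23}$ respectively. Specializing to $M=N=P=H$, whose regular action is faithful on $H^{\ot 3}$, the strong twine axiom (\ref{str3}) applied to $c^2$ is equivalent to $F_{12}F_{23}=F_{23}F_{12}$ as an identity in $H\ot H\ot H$. Since every double braiding is a twine, $c^2$ automatically satisfies (\ref{str1}) and (\ref{str2}), so being a strong twine reduces precisely to (\ref{str3}).

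Combining these two ingredients with Theorem~\ref{carpseudosym} produces the chain
\[
R \text{ pseudotriangular}\;\Longleftrightarrow\; c \text{ pseudosymmetric}\;\Longleftrightarrow\; c^2 \text{ strong twine}\;\Longleftrightarrow\; F_{12}F_{23}=F_{23}F_{12},
\]
which is the desired statement. The main bookkeeping hurdle lies in the first translation: one has to track carefully how the composite of braidings evaluated at $1\ot 1\ot 1$ reorganizes into the correct product of $R_{ij}$'s, and recognize that the output is the $\tau_{13}$-image of (\ref{pseudoQYB}) rather than (\ref{pseudoQYB}) itself, these being equivalent since $\tau_{13}$ is an algebra automorphism of $H^{\ot 3}$.
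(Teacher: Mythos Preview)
Your proposal is correct and follows precisely the approach the paper intends: the paper does not give an explicit proof but presents the proposition as ``the Hopf-algebraic counterpart of Theorem~\ref{carpseudosym},'' and you have carefully filled in the translation between the categorical statement on $({}_H{\cal M},c)$ and the element-level identities in $H^{\otimes 3}$. The bookkeeping you flag (that specializing the pseudosymmetry relation to the regular module yields the $\tau_{13}$-conjugate of (\ref{pseudoQYB}), and that the double braiding acts by $F=R_{21}R$ so that (\ref{str3}) becomes $F_{12}F_{23}=F_{23}F_{12}$) is exactly what is needed and is handled correctly.
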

\begin{example} {\em 
If $H$ is a commutative Hopf algebra, then any quasitriangular structure 
on $H$ is pseudotriangular. For instance, if $k$ has characteristic zero 
and contains a primitive root of unity of degree $n$, then the group 
algebra of the cyclic group $\mathbb{Z}_n$ admits a certain 
quasitriangular structure  
(constructed in \cite{maj}, \cite{radford}) which is {\em not}  
triangular for $n\geq 3$. Thus, for $n\geq 3$, the category of 
representations of $\mathbb{Z}_n$ admits a pseudosymmetric braiding which 
is not symmetric.}
\end{example}
\begin{remark}{\em  
Let $H$ be a finite dimensional Hopf algebra. It is well-known that the 
category of Yetter-Drinfeld modules $_H{\cal YD}^H$ is braided 
equivalent to  
the category $_{D(H)}{\cal M}$ of left modules over the Drinfeld double of 
$H$ (realized on $H^{*\;cop}\otimes H$ and with quasitriangular  
structure given by   
$R=\sum (\varepsilon \otimes e_i)\otimes (e^i\otimes 1)$, 
where $\{e_i\}$, $\{e^i\}$  
are dual bases in $H$ and $H^*$). Thus, via  
Theorem \ref{co}, we obtain that    
$R$ is pseudotriangular if and only if $H$ is commutative and 
cocommutative. In particular, if $G$ is a finite, noncommutative group 
then ($D(k[G]), R$) is quasitriangular but not pseudotriangular.} 
\end{remark} 
\begin{definition} (\cite{liuzhu}) 
Let $(H, R)$ be a quasitriangular Hopf algebra. The element $R$  
is called {\bf almost-triangular} if $R_{21}R$ is central  
in $H\otimes H$. 
\end{definition}
\begin{remark}
By Proposition \ref{carneat} it follows that an 
almost-triangular structure is pseudotriangular. The converse is not  
true, a counterexample is provided by Proposition \ref{EN} below.
\end{remark}    

Assume now that $char (k)\neq 2$ and  
consider the $2^{n+1}$-dimensional Hopf algebra $E(n)$ generated by 
$c$, $x_1,$$...,$$\;x_n$ with relations $c^2=1$, $x_i^2=0$,  
$x_ic+cx_i=0$ and $x_ix_j+x_jx_i=0$, for all $i, j\in \{1, ..., n\}$, and   
coalgebra structure $\Delta (c)=c\otimes c$,  
$\Delta (x_i)=1\otimes x_i+x_i\otimes c$, for all $i\in \{1, ..., n\}$. 
The quasitriangular structures of $E(n)$ have been classified in 
\cite{panvo}, they are in bijection with $n\times n$ matrices with 
entries in $k$, and moreover the quasitriangular structure $R_A$ 
corresponding to the matrix $A$ is given by an explicit formula, 
generalizing the cases $n=1$ from \cite{radmin} and $n=2$ from 
\cite{gelaki}. By \cite{panvo} and \cite{cc} we know that $R_A$ is 
triangular if and only if the matrix $A$ is symmetric. 
\begin{proposition} \label{EN}
For any $n\times n$ matrix $A$, the quasitriangular structure $R_A$ is 
pseudotriangular, and it is almost-triangular if and only if $A$ is 
symmetric (thus the only almost-triangular structures of $E(n)$ are the 
triangular ones). 
\end{proposition}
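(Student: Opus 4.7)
By Proposition \ref{carneat}, $R_A$ is pseudotriangular if and only if the lazy twist $F_A:=(R_A)_{21}R_A$ satisfies $(F_A)_{12}(F_A)_{23}=(F_A)_{23}(F_A)_{12}$, and by definition $R_A$ is almost-triangular if and only if $F_A$ is central in $E(n)\otimes E(n)$. Both assertions therefore reduce to structural properties of the single element $F_A$.

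The first step is to recall the explicit formula for $R_A$ from \cite{panvo}, expressing $R_A$ as the product of the Sweedler triangular $R$-matrix $R_0=\frac{1}{2}(1\otimes 1+c\otimes 1+1\otimes c-c\otimes c)$ of $k[\mathbb{Z}_2]\subseteq E(n)$ with a family of pairwise commuting factors of the form $1+a_{ij}\,x_ic\otimes x_j$; pairwise commutativity of these factors is an immediate check from $x_\alpha^2=0$ together with $x_\alpha x_\beta=-x_\beta x_\alpha$. Since $R_0$ is symmetric and $R_0^2=1\otimes 1$, I would then move one copy of $R_0$ past the $a$-dependent factors of $(R_A)_{21}R_A$ (using that conjugation by $R_0$ acts on each such factor in a controlled sign-preserving way), producing an explicit and compact expression for $F_A$ as a product of pairwise commuting factors.

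With this closed form for $F_A$ in hand, pseudotriangularity is checked by a direct expansion of $(F_A)_{12}(F_A)_{23}$ against $(F_A)_{23}(F_A)_{12}$ in $E(n)^{\otimes 3}$. The overlap between the two placements occurs only in the middle tensor slot, where the pairwise anticommutation of the $x_i$'s and the parity of $x$-degrees coming from each factor force the desired cancellation for \emph{every} matrix $A$. For almost-triangularity, I would test $F_A$ against the algebra generators of $E(n)\otimes E(n)$: commutation with $c\otimes 1$ and $1\otimes c$ is automatic from the parity of the $x$-content of $F_A$, whereas commutation with $x_k\otimes 1$ and $1\otimes x_k$ yields a system of linear relations on the $a_{ij}$'s that collapses to the single family of conditions $a_{ij}=a_{ji}$ for all $i,j$, i.e., symmetry of $A$. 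Conversely, when $A$ is symmetric, \cite{panvo} and \cite{cc} give that $R_A$ is triangular, hence $F_A=1\otimes 1$ is trivially central; this also yields the parenthetical conclusion that the almost-triangular structures on $E(n)$ coincide with the triangular ones.

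The main obstacle is the bookkeeping in the first step --- producing a workable closed form for $F_A$ by carefully tracking how the two copies of $R_0$ interact with the $a$-dependent factors of $(R_A)_{21}$ and $R_A$. Once such a form is in hand, both verifications reduce to finite, sign-sensitive combinatorial checks using only the defining relations of $E(n)$.
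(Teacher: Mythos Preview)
Your strategy for pseudotriangularity is sound and in fact slightly different from the paper's. Rather than invoking Proposition~\ref{carneat}, the paper verifies the modified Yang--Baxter relation $(R_A)_{12}(R_A)_{31}^{-1}(R_A)_{23}=(R_A)_{23}(R_A)_{31}^{-1}(R_A)_{12}$ directly (indeed a three-parameter generalization with $R_A,R_B,R_C$), by writing $R_A=R_0T_A$ with $T_A=\prod_{i,j}\bigl(1+a_{ij}\,x_i\otimes cx_j\bigr)$ and pushing all the $R_0$'s to the left via explicit commutation rules, so that both sides become $R_0$-Yang--Baxter times the same commuting product of $T$-factors. Your route via $F_A=(R_A)_{21}R_A$ is more economical: once one computes $F_A=T_{A-A^t}$, the neat condition $(F_A)_{12}(F_A)_{23}=(F_A)_{23}(F_A)_{12}$ reduces to the single commutator check $[x_i\otimes cx_j\otimes 1,\,1\otimes x_k\otimes cx_l]=0$, which follows from $cx_j\cdot x_k=x_k\cdot cx_j$ in $E(n)$. (A small point: the factors carry $c$ on the \emph{right} tensorand, $1+a_{ij}\,x_i\otimes cx_j$, not $x_ic\otimes x_j$ as you wrote; this matters for the sign bookkeeping.)

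There is, however, a genuine error in your almost-triangular analysis. You claim that commutation of $F_A$ with $c\otimes 1$ and $1\otimes c$ is ``automatic from the parity of the $x$-content of $F_A$'', and that the obstruction comes from the $x_k\otimes 1$, $1\otimes x_k$. This is backwards. Each monomial $x_P\otimes c^{|P|}x_F$ appearing in $T_{A-A^t}$ has $|P|$ copies of $x$ in the first slot and $|P|$ in the second; while the \emph{total} $x$-degree is even, the degree in each slot separately need not be, so conjugation by $1\otimes c$ (or $c\otimes 1$) sends the degree-one term $\sum_{i,j}(a_{ij}-a_{ji})\,x_i\otimes cx_j$ to its negative. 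These linear terms are linearly independent in $E(n)\otimes E(n)$, so $F_A$ commutes with $1\otimes c$ if and only if $a_{ij}=a_{ji}$ for all $i,j$. This is exactly how the paper argues: it computes $(R_A)_{21}R_A=T_{A-A^t}$ and observes that $T_B$ fails to commute with $1\otimes c$ whenever $B\neq 0$. Your proposed test against $x_k\otimes 1$ would also eventually detect the obstruction, but the grouplike generators give it in one line; in any case the parity claim as stated is false and should be corrected.
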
  
\begin{proof}
We present first an alternative description for the quasitriangular 
structure $R_A$. 
For every $a\in k$ and $i, j\in \{1, ..., n\}$ we define the element
\begin{eqnarray}  
&T_{i,j}(a):=1\otimes 1+a(x_i\otimes cx_j) \in E(n)\otimes E(n). 
\label{Tija}
\end{eqnarray} 
It is easy to see that $T_{i,j}(a)$ is a lazy twist,  
$T_{i,j}(a)T_{i,j}(b)=T_{i,j}(a+b)$ and  
$T_{i,j}(a)T_{k,l}(b)=T_{k,l}(b)T_{i,j}(a)$, for all $a, b\in k$ and 
$i, j, k, l\in \{1, ..., n\}$. If $A=(a_{ij})_{i, j=1, ..., n}$ is an  
$n\times n$ matrix, we define the element 
\begin{eqnarray} 
&&T_A:=\prod _{i,j=1}^nT_{i,j}(a_{ij}) \in E(n)\otimes E(n) \label{TAprod}
\end{eqnarray}
(note that the order of the factors does not matter since they all 
commute). It is clear that if $B$ is another $n\times n$ matrix then 
$T_AT_B=T_{A+B}$. One can also see that the element $T_A$ is given by the 
formula 
\begin{eqnarray}
&&T_A=1\otimes 1+\sum _{\mid P\mid=\mid F\mid}(-1)^
{\frac{\mid P\mid (\mid P\mid-1)}{2}}det (P, F)x_P\otimes c^{\mid P\mid }
x_F, \label{TA}
\end{eqnarray} 
where the sum is made over all nonempty subsets $P$, $F$ of 
$\{1, ..., n\}$ such that $\mid P\mid=\mid F\mid$, and if 
$P=\{i_1<i_2<\dotsb < i_s\}$ and $F=\{j_1<j_2<\dotsb < j_s\}$ then 
$det (P, F)$ is the determinant of the $s\times s$ matrix obtained at 
the intersection of the rows $i_1, ..., i_s$ and columns $j_1, ..., j_s$ of  
the matrix $A$, and $x_P=x_{i_1}\dotsb x_{i_s}$,  
$x_F=x_{j_1}\dotsb x_{j_s}$. In particular we obtain $T_0=1\otimes 1$ and 
$T_A^{-1}=T_{-A}$. 
 
Define now the element 
\begin{eqnarray*}
&&R:=\frac{1}{2}(1\otimes 1+c\otimes 1+1\otimes c-c\otimes c) 
\in E(n)\otimes E(n),
\end{eqnarray*}
which is a triangular structure for $E(n)$.  
From the formula for the quasitriangular structure $R_A$ in \cite{panvo} 
and (\ref{TA}) we immediately obtain 
\begin{eqnarray}  
&&R_A=RT_A. \label{RARTA}
\end{eqnarray}
If we denote by $A^t$ the transpose of a matrix $A$, then we know from 
\cite{cc} that 
\begin{eqnarray}
&&R_A^{-1}=(R_{A^t})_{21}, \label{invRA}
\end{eqnarray}
a consequence of which is the relation  
$(R_A)_{21}R_B=T_{B-A^t}$, for any $n\times n$ matrices $A$ and $B$. We 
record also the obvious relation $R_AT_B=R_{A+B}$, as well as 
$(T_A)_{21}R_B=R_{B-A^t}$. 

Let now $A$ be an $n\times n$ matrix;  
we will prove that $R_A$ is pseudotriangular. In view of (\ref{invRA}), 
what we need to prove is the relation 
\begin{eqnarray}
&&(R_A)_{12}(R_{A^t})_{13}(R_A)_{23}=(R_A)_{23}(R_{A^t})_{13}(R_A)_{12}. 
\label{modif}
\end{eqnarray}
We will actually prove something more general, namely 
\begin{eqnarray}
&&(R_A)_{12}(R_B)_{13}(R_C)_{23}=(R_C)_{23}(R_B)_{13}(R_A)_{12},  
\label{modifgen}
\end{eqnarray}
for any $n\times n$ matrices $A$, $B$ and $C$. We introduce the following  
notation, for $a\in k$ and $i, j\in \{1, ..., n\}$: 
\begin{eqnarray*}
&&T_{i, j}(a)_{12c}:=1\otimes 1\otimes 1+ax_i\otimes cx_j\otimes c, \\
&&T_{i, j}(a)_{1c3}:=1\otimes 1\otimes 1+ax_i\otimes c\otimes cx_j, \\
&&T_{i, j}(a)_{c23}:=1\otimes 1\otimes 1+c\otimes ax_i\otimes cx_j.
\end{eqnarray*} 
By direct computation one can prove the following relations:
\begin{eqnarray*}
&&T_{i,j}(a)_{23}R_{13}=R_{13}T_{i,j}(a)_{c23},\\  
&&T_{i,j}(a)_{c23}R_{12}=R_{12}T_{i,j}(a)_{23}, \\
&&T_{i,j}(a)_{13}R_{12}=R_{12}T_{i,j}(a)_{1c3}, \\
&&T_{i,j}(a)_{13}R_{23}=R_{23}T_{i,j}(a)_{1c3}, \\ 
&&T_{i,j}(a)_{12}R_{13}=R_{13}T_{i,j}(a)_{12c}, \\ 
&&T_{i,j}(a)_{12c}R_{23}=R_{23}T_{i,j}(a)_{12}.
\end{eqnarray*}
One can also see that, for all $i, j, k, l, p, q\in \{1, ..., n\}$ and 
$x, y, z\in k$, all the elements $T_{i, j}(x)_{23}$, $T_{k, l}(y)_{12}$ 
and $T_{p, q}(z)_{1c3}$ commute with each other. Using all these facts 
together with the formulae (\ref{RARTA}) and (\ref{TAprod}) we obtain
\begin{eqnarray*}
&&(R_A)_{12}(R_B)_{13}(R_C)_{23}=R_{12}R_{13}R_{23}(T_A)_{12}(T_B)_{1c3}
(T_C)_{23}, \\
&&(R_C)_{23}(R_B)_{13}(R_A)_{12}=R_{23}R_{13}R_{12}(T_C)_{23}(T_B)_{1c3}
(T_A)_{12},
\end{eqnarray*}
and the right hand sides are equal because of the above-mentioned 
commutation relations together with the fact that $R$ satisfies the 
Yang-Baxter equation. 
 
We prove now that $R_A$ is almost-triangular if and only if  
$A$ is symmetric.  
Let $B$ be an $n\times n$ matrix; it is easy to see that $T_B$ is central  
in $E(n)\otimes E(n)$ if and only if $B=0$, because if $B\neq 0$ then 
$T_B$ does not commute with $1\otimes c$. We have seen above that 
$(R_A)_{21}R_A=T_{A-A^t}$, and so $(R_A)_{21}R_A$ is central if and only 
if $A=A^t$.
\end{proof}
\begin{remark}{\em 
We consider the group ${\mb Z}^2(E(n))$ as in Section \ref{seclaycles}, 
and inside it the set 
$G_n:=\{T_A,\,  R_A\}$, where $A$ is an $n\times n$ matrix. If we denote by 
$*$ the multiplication in ${\mb Z}^2(E(n))$, then we have 
\begin{eqnarray*} 
&&T_A*T_B=T_AT_B=T_{A+B}, \\ 
&&R_A*T_B=R_AT_B=R_{A+B},\\ 
&&T_A*R_B=(T_A)_{21}R_B=R_{B-A^t}, \\
&&R_A*R_B=(R_A)_{21}R_B=T_{B-A^t}, 
\end{eqnarray*}
and so $G_n$ is a subgroup of ${\mb Z}^2(E(n))$ (note that the inverse 
of $R_A$ in this group is $R_{A^t}$).    
The above formulae imply  
$G_n\simeq {\mathbb Z}_2\ltimes (M_n(k), +)$, a semidirect 
product, where the action of ${\mathbb Z}_2$ on $(M_n(k), +)$ is given by  
$A\cdot g=-A^t$ ($g$ is the generator of ${\mathbb Z}_2$), 
and the correspondence is given by $T_A\mapsto (1, A)$, 
$R_A\mapsto (g, A)$.    
For $n=1$ ($E(1)$ is Sweedler's 4-dimensional Hopf  
algebra), one can prove by direct computation that $G_1={\mb Z}^2(E(1))$.} 
\end{remark}
\section{Laycles, pseudotwistors and R-matrices}
\setcounter{equation}{0}
${\;\;\;\;\;}$
We recall the following concept and result from \cite{lpvo}: 
\begin{proposition} \label{tilda} (\cite{lpvo})
Let $\cal C$ be a monoidal category, $A$ an algebra in 
$\cal C$ with multiplication $\mu $ and unit $u$, $T:A\otimes A\rightarrow 
A\otimes A$ a morphism in $\cal C$ such 
that $T\circ (u \otimes id_A)=u\otimes id_A$ and  
$T\circ (id_A\otimes u)=id_A\otimes u$.    
Assume that there exist two morphisms 
$\tilde{T}_1, \tilde{T}_2:A\otimes A\otimes A 
\rightarrow A\otimes A\otimes A$ in $\cal C$ such that   
\begin{eqnarray}
&&(id_A\otimes \mu )\circ \tilde{T}_1\circ (T\otimes id_A)=
T\circ (id_A\otimes \mu ), 
\label{catw1} \\
&&(\mu \otimes id_A)\circ \tilde{T}_2\circ (id_A\otimes T)=
T\circ (\mu \otimes id_A), 
\label{catw2} \\
&&\tilde{T}_1\circ (T\otimes id_A)\circ (id_A\otimes T)= 
\tilde{T}_2\circ (id_A\otimes T)\circ (T\otimes id_A). \label{catw3}
\end{eqnarray}
Then $(A, \mu \circ T, u)$ is also an algebra in ${\cal C}$,   
denoted by $A^T$. The morphism $T$ is called a {\bf pseudotwistor} and 
the two morphisms $\tilde{T}_1$, $\tilde{T}_2$ are called the 
{\bf companions} of $T$. If ${\cal C}$ is the category of $k$-vector 
spaces, $\tilde{T}_1=\tilde{T}_2=T_{13}$ and $T_{12}\circ T_{23}=
T_{23}\circ T_{12}$, then $T$ is called a {\bf twistor} for $A$.  
\end{proposition}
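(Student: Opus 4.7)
The plan is to verify the two algebra axioms for $(A,\mu\circ T,u)$ in ${\cal C}$: the unit laws and associativity. The unit laws come essentially for free. Since by hypothesis $T\circ(u\otimes id_A)=u\otimes id_A$ and $T\circ(id_A\otimes u)=id_A\otimes u$, we have $(\mu\circ T)\circ(u\otimes id_A)=\mu\circ(u\otimes id_A)=id_A$ by the original left unit axiom for $\mu$, and symmetrically on the right; so no use of the companions $\tilde{T}_1,\tilde{T}_2$ is needed at this stage.

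The substantial point is associativity, i.e.\ the equality
\[
(\mu\circ T)\circ((\mu\circ T)\otimes id_A)=(\mu\circ T)\circ(id_A\otimes(\mu\circ T)).
\]
I would expand the left-hand side as $\mu\circ T\circ(\mu\otimes id_A)\circ(T\otimes id_A)$; applying (\ref{catw2}) to rewrite $T\circ(\mu\otimes id_A)$ as $(\mu\otimes id_A)\circ\tilde{T}_2\circ(id_A\otimes T)$ and then invoking associativity of the original product $\mu$ to replace $\mu\circ(\mu\otimes id_A)$ by $\mu\circ(id_A\otimes\mu)$, this becomes $\mu\circ(id_A\otimes\mu)\circ\tilde{T}_2\circ(id_A\otimes T)\circ(T\otimes id_A)$. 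For the right-hand side I would expand to $\mu\circ T\circ(id_A\otimes\mu)\circ(id_A\otimes T)$ and apply (\ref{catw1}) to rewrite $T\circ(id_A\otimes\mu)$ as $(id_A\otimes\mu)\circ\tilde{T}_1\circ(T\otimes id_A)$, obtaining $\mu\circ(id_A\otimes\mu)\circ\tilde{T}_1\circ(T\otimes id_A)\circ(id_A\otimes T)$.

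The two resulting expressions are then identified by (\ref{catw3}), which says precisely that $\tilde{T}_1\circ(T\otimes id_A)\circ(id_A\otimes T)=\tilde{T}_2\circ(id_A\otimes T)\circ(T\otimes id_A)$. The only point requiring care is bookkeeping, namely applying (\ref{catw1}) and (\ref{catw2}) on the appropriate halves of the associativity equation so that (\ref{catw3}) closes the argument. There is no real obstacle, because associativity of $\mu$ is exactly what bridges the ``$\mu\otimes id_A$'' and ``$id_A\otimes\mu$'' placements produced on the two sides, and the three companion axioms have evidently been designed to make this single chain of rewrites go through.
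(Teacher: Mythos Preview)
Your proof is correct and is exactly the standard argument. The paper itself does not prove this proposition (it is merely recalled from \cite{lpvo}), but the analogous proof it gives later for the $R$-matrix variant follows precisely the same chain of rewrites you outline: expand one side, apply the appropriate companion axiom, use associativity of $\mu$, and close with the compatibility relation between the two companions.
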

\begin{proposition} \label{layclepseudo}
Let ${\cal C}$ be a monoidal category and $T$ a laycle on ${\cal C}$.  
If $(A, \mu , u)$ is an algebra in ${\cal C}$, then $T_{A, A}$ is a 
pseudotwistor for $A$, with companions $\tilde{T}_1:=T^b_{A, A, A}$ and 
$\tilde{T}_2:=T^f_{A, A, A}$, where $T^b$ and $T^f$ are the 
families defined by (\ref{ela}) and (\ref{fla}).   
\end{proposition}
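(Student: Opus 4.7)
The plan is to verify the four pseudotwistor conditions directly from the definition of a laycle together with the two equivalent expressions for $T^b$ and $T^f$ from (\ref{ela}) and (\ref{fla}). The work is essentially bookkeeping with cancellations $T_{A,A}\circ T_{A,A}^{-1}=id$ and applications of the naturality of $T$.

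First I would check the unit conditions $T_{A,A}\circ(u\ot id_A)=u\ot id_A$ and $T_{A,A}\circ(id_A\ot u)=id_A\ot u$. These follow from the naturality of $T$ applied to $u:I\to A$, combined with $T_{I,A}=T_{A,I}=id_A$ (noted in the remark after the definition of a laycle).

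For (\ref{catw1}), I would use the second expression $T^b_{A,A,A}=T_{A,A\ot A}\circ(T_{A,A}^{-1}\ot id_A)$ from (\ref{ela}). Then
\[
T^b_{A,A,A}\circ(T_{A,A}\ot id_A)=T_{A,A\ot A}\circ(T_{A,A}^{-1}\ot id_A)\circ(T_{A,A}\ot id_A)=T_{A,A\ot A},
\]
so composing with $id_A\ot\mu$ on the left and using the naturality of $T$ applied to $\mu:A\ot A\to A$ (in the second variable) yields $T_{A,A}\circ(id_A\ot\mu)$, as required. Condition (\ref{catw2}) is proved symmetrically with the first expression $T^f_{A,A,A}=T_{A\ot A,A}\circ(id_A\ot T_{A,A}^{-1})$ from (\ref{fla}) and the naturality of $T$ applied to $\mu$ in the first variable.

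The key step is (\ref{catw3}). Using $T^b_{A,A,A}=T_{A,A\ot A}\circ(T_{A,A}^{-1}\ot id_A)$ I would compute
\[
T^b_{A,A,A}\circ(T_{A,A}\ot id_A)\circ(id_A\ot T_{A,A})=T_{A,A\ot A}\circ(id_A\ot T_{A,A})=(id_A\ot T_{A,A})\circ T_{A,A\ot A},
\]
where the last equality is (\ref{lac2}). Similarly, using $T^f_{A,A,A}=T_{A\ot A,A}\circ(id_A\ot T_{A,A}^{-1})$ and (\ref{lac1}),
\[
T^f_{A,A,A}\circ(id_A\ot T_{A,A})\circ(T_{A,A}\ot id_A)=T_{A\ot A,A}\circ(T_{A,A}\ot id_A)=(T_{A,A}\ot id_A)\circ T_{A\ot A,A}.
\]
The equality of these two expressions is exactly the laycle axiom (\ref{c1}) for $X=Y=Z=A$, so (\ref{catw3}) holds. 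No step is a real obstacle; the only thing to be careful about is picking the correct one of the two equivalent forms of $T^b$ and $T^f$ in each verification so that the cancellations occur cleanly.
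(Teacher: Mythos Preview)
Your proof is correct and follows essentially the same approach as the paper's own proof: naturality of $T$ for (\ref{catw1}) and (\ref{catw2}), and the laycle axiom (\ref{c1}) for (\ref{catw3}). You spell out more detail than the paper does---in particular the unit conditions and the explicit use of (\ref{lac1}), (\ref{lac2}) in verifying (\ref{catw3})---but the argument is the same.
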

\begin{proof}
We prove (\ref{catw1}). The naturality of $T$ implies 
$T_{A, A}\circ (id_A\otimes \mu )=(id_A\otimes \mu )\circ T_{A, A\otimes A}$, 
and from (\ref{ela}) we obtain $T_{A, A}\circ (id_A\otimes \mu )=
(id_A\otimes \mu )\circ T^b_{A, A, A}\circ (T_{A, A}\otimes id_A)$, q.e.d. 
Similarly one can prove (\ref{catw2}), while (\ref{catw3}) follows 
immediately by using (\ref{ela}), (\ref{fla}) and (\ref{c1}). 
\end{proof}
\begin{corollary}
If $T$ is a laycle on a monoidal category ${\cal C}$ and 
$(A, \mu , u)$ is an algebra in ${\cal C}$, then 
$(A, \mu \circ T_{A, A}, u)$ is also an algebra in ${\cal C}$. 
\end{corollary}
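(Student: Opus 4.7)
The corollary is an immediate consequence of the two preceding results, and the plan is simply to chain them together. By Proposition~\ref{layclepseudo}, the component $T_{A,A}$ is a pseudotwistor for $A$, with companions $\tilde{T}_1 = T^b_{A,A,A}$ and $\tilde{T}_2 = T^f_{A,A,A}$. Once one has a pseudotwistor, Proposition~\ref{tilda} directly yields that $(A, \mu \circ T_{A,A}, u)$ is an algebra in ${\cal C}$, so the core of the argument is already done.

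The only thing one needs to verify before invoking Proposition~\ref{tilda} is the pair of unit compatibilities $T_{A,A} \circ (u \otimes id_A) = u \otimes id_A$ and $T_{A,A} \circ (id_A \otimes u) = id_A \otimes u$, since Proposition~\ref{layclepseudo} supplies (\ref{catw1}), (\ref{catw2}), (\ref{catw3}) but says nothing about the unit. Both follow from the observation (made in the remark after the definition of a laycle) that $T_{X,I} = T_{I,X} = id_X$ for all $X \in {\cal C}$, combined with the naturality of $T$ applied to $u: I \to A$. Explicitly, naturality of $T$ with respect to $u \otimes id_A: I \otimes A \to A \otimes A$ gives $T_{A,A} \circ (u \otimes id_A) = (u \otimes id_A) \circ T_{I,A} = u \otimes id_A$, and similarly on the other side.

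There is no real obstacle: the proof is a one-line citation once the unit normalization is noted. I would present it as: ``By Proposition~\ref{layclepseudo}, $T_{A,A}$ is a pseudotwistor for $A$; the unit conditions in Proposition~\ref{tilda} follow from $T_{I,A} = T_{A,I} = id_A$ and naturality of $T$; hence Proposition~\ref{tilda} gives the claim.'' If anything deserves a remark, it is only that this construction is functorial in the laycle (composing laycles corresponds to composing the resulting pseudotwistors), but that is not needed for the corollary itself.
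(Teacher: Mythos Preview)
Your proposal is correct and matches the paper's approach exactly: the corollary is stated without proof in the paper, as it follows immediately from Proposition~\ref{layclepseudo} together with Proposition~\ref{tilda}. Your explicit verification of the unit conditions via naturality of $T$ and $T_{I,A}=T_{A,I}=id_A$ is a welcome bit of care, since the paper's proof of Proposition~\ref{layclepseudo} only spells out (\ref{catw1})--(\ref{catw3}) and leaves the unit compatibilities implicit.
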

\begin{remark}
If ${\cal C}$ is a monoidal category and $c$ is a braiding on  
${\cal C}$, then, by \cite{brug}, the double braiding 
$c^2_{X, Y}:=c_{Y, X}\circ c_{X, Y}$ is a twine on ${\cal C}$, in particular 
a laycle. Thus, Proposition \ref{layclepseudo} generalizes the fact 
(proved in \cite{lpvo}, Corollary 6.8) that a double braiding induces a 
pseudotwistor on every algebra in ${\cal C}$. 
\end{remark}
\begin{definition}
Let ${\cal C}$ be a monoidal category, $(A, \mu , u)$ an algebra in 
${\cal C}$ and $T:A\otimes A\rightarrow A\otimes A$ a pseudotwistor with 
companions $\tilde{T}_1$ and $\tilde{T}_2$. We say that $T$ is a 
{\bf strong pseudotwistor} if $T$ is invertible and the following conditions 
are satisfied:
\begin{eqnarray}
&&\tilde{T}_2\circ (id_A\otimes T)=(id_A\otimes T)\circ \tilde{T}_1, 
\label{strongpseudo1} \\
&&\tilde{T}_1\circ (T\otimes id_A)=(T\otimes id_A)\circ \tilde{T}_2. 
\label{strongpseudo2}
\end{eqnarray}
In this case, we denote 
\begin{eqnarray*}
&&T_{A\otimes A, A}:=\tilde{T}_2\circ (id_A\otimes T)=
(id_A\otimes T)\circ \tilde{T}_1, \\
&&T_{A, A\otimes A}:=\tilde{T}_1\circ (T\otimes id_A)=
(T\otimes id_A)\circ \tilde{T}_2. 
\end{eqnarray*}
\end{definition}
\begin{remark}
If $T_{X, Y}$ is a laycle on a monoidal category ${\cal C}$ and 
$(A, \mu , u)$ is an algebra in ${\cal C}$, then, by (\ref{c1}), 
it follows that $T_{A, A}$ is a strong pseudotwistor for $A$.
\end{remark}
\begin{lemma}
If $T$ is a strong pseudotwistor, then the  
following relations hold:
\begin{eqnarray}
&&(T\otimes id_A)\circ T_{A\otimes A, A}=T_{A\otimes A, A}\circ 
(T\otimes id_A), \label{T1} \\
&&(id_A\otimes T)\circ T_{A, A\otimes A}=T_{A, A\otimes A}\circ 
(id_A\otimes T), \label{T2} \\
&&T_{A\otimes A, A}\circ (T\otimes id_A)=T_{A, A\otimes A}\circ (id_A
\otimes T), \label{T3}\\
&&(T\otimes id_A)\circ \tilde{T}_2\circ (id_A\otimes T)=
(id_A\otimes T)\circ \tilde{T}_1\circ (T\otimes id_A). \label{T4}
\end{eqnarray}
\end{lemma}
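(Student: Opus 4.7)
The plan is to derive all four identities as direct consequences of the companion-compatibility axiom (\ref{catw3}), combined with the two strong-pseudotwistor conditions (\ref{strongpseudo1}) and (\ref{strongpseudo2}), by repeatedly exploiting the fact that $T_{A\ot A,A}$ and $T_{A,A\ot A}$ each admit two equivalent expressions.

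For (\ref{T3}), I would begin from $T_{A\ot A,A}\circ (T\ot id_A) = \tilde T_2\circ (id_A\ot T)\circ (T\ot id_A)$, apply (\ref{catw3}) to rewrite the right-hand side as $\tilde T_1\circ (T\ot id_A)\circ (id_A\ot T)$, and then recognize $\tilde T_1\circ (T\ot id_A) = T_{A,A\ot A}$; this gives (\ref{T3}) directly, with (\ref{catw3}) the only ingredient.

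Identity (\ref{T1}) is then obtained by continuing the same chain one step further: from $T_{A\ot A,A}\circ (T\ot id_A) = \tilde T_2\circ (id_A\ot T)\circ (T\ot id_A)$, pass through (\ref{catw3}) to $\tilde T_1\circ (T\ot id_A)\circ (id_A\ot T)$, and then apply (\ref{strongpseudo2}) in the form $\tilde T_1\circ (T\ot id_A) = (T\ot id_A)\circ \tilde T_2$ to shift $(T\ot id_A)$ to the outside, arriving at $(T\ot id_A)\circ \tilde T_2\circ (id_A\ot T) = (T\ot id_A)\circ T_{A\ot A,A}$. Identity (\ref{T2}) is the mirror argument: start from $T_{A,A\ot A}\circ (id_A\ot T) = \tilde T_1\circ (T\ot id_A)\circ (id_A\ot T)$, apply (\ref{catw3}), and then use (\ref{strongpseudo1}) to pull $(id_A\ot T)$ to the outside. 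Finally, (\ref{T4}) is (\ref{catw3}) itself, up to two transports: use (\ref{strongpseudo2}) on its left-hand side to turn $(T\ot id_A)\circ \tilde T_2$ into $\tilde T_1\circ (T\ot id_A)$, invoke (\ref{catw3}), and then apply (\ref{strongpseudo1}) to the $\tilde T_2\circ (id_A\ot T)$ that appears to reach the claimed right-hand side.

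There is no genuine obstacle here; the lemma is essentially a bookkeeping statement, and invertibility of $T$ plays no role in the derivations (it is used only to ensure that $T_{A\ot A,A}$ and $T_{A,A\ot A}$ make sense as stand-alone objects in the first place). The one subtlety worth flagging when writing up the proof is to choose at each step the ``correct'' of the two equivalent expressions for $T_{A\ot A,A}$ or $T_{A,A\ot A}$, so that (\ref{catw3}) can be applied directly and no inverse of $T$ ever enters the calculation.
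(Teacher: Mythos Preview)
Your proof is correct and matches the paper's approach exactly: the paper simply states ``Straightforward computation, using (\ref{strongpseudo1}), (\ref{strongpseudo2}) and (\ref{catw3}),'' and your derivations of (\ref{T1})--(\ref{T4}) spell out precisely that computation. Your side remark that invertibility of $T$ plays no role in the argument is also accurate.
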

\begin{proof}
Straightforward computation, using (\ref{strongpseudo1}), 
(\ref{strongpseudo2}) and (\ref{catw3}). 
\end{proof}

Our next results are the analogues for pseudotwistors of the facts that 
composition of laycles is a laycle and the inverse of a laycle is a laycle. 
\begin{proposition}
Let ${\cal C}$ be a monoidal category, $(A, \mu , u)$ an algebra in 
${\cal C}$ and $T, D:A\otimes A\rightarrow A\otimes A$ two strong 
pseudotwistors for $A$, such that 
\begin{eqnarray}
&&D_{A, A\otimes A}\circ (id_A\otimes T)=(id_A\otimes T)\circ 
D_{A, A\otimes A}, \label{partic1} \\
&&D_{A\otimes A, A}\circ (T\otimes id_A)=(T\otimes id_A)\circ 
D_{A\otimes A, A}. \label{partic2} 
\end{eqnarray} 
Then $U:=T\circ D$ is a pseudotwistor for $A$, with companions $\tilde{U}_1:= 
T_{A, A\otimes A}\circ \tilde{D}_1\circ (T^{-1}\otimes id_A)$ and 
$\tilde{U}_2:=T_{A\otimes A, A}\circ \tilde{D}_2\circ (id_A\otimes T^{-1})$. 
If moreover we have 
\begin{eqnarray}
&&T_{A, A\otimes A}\circ (id_A\otimes D)=(id_A\otimes D)\circ 
T_{A, A\otimes A}, \label{partic3} \\
&&T_{A\otimes A, A}\circ (D\otimes id_A)=(D\otimes id_A)\circ 
T_{A\otimes A, A},  \label{partic4} 
\end{eqnarray} 
then $U$ is also a strong pseudotwistor.
\end{proposition}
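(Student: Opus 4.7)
The plan is to verify the three pseudotwistor axioms (\ref{catw1}), (\ref{catw2}), (\ref{catw3}) for $U$ with the stated companions, and then add the two strong conditions (\ref{strongpseudo1}), (\ref{strongpseudo2}) under the extra hypotheses. Invertibility of $U=T\circ D$ and the unit conditions $U\circ(u\otimes id)=u\otimes id$, $U\circ(id\otimes u)=id\otimes u$ are immediate from the corresponding properties of $T$ and $D$, so the substance is algebraic manipulation.

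For (\ref{catw1}), I first simplify $\tilde{U}_1\circ(U\otimes id)=T_{A,A\otimes A}\circ\tilde{D}_1\circ(T^{-1}\otimes id)\circ(T\otimes id)\circ(D\otimes id)=T_{A,A\otimes A}\circ\tilde{D}_1\circ(D\otimes id)$, then apply $(id\otimes\mu)$: using $T_{A,A\otimes A}=\tilde{T}_1\circ(T\otimes id)$ and (\ref{catw1}) for $T$, the outer $(id\otimes\mu)$ absorbs into $T\circ(id\otimes\mu)$, after which (\ref{catw1}) for $D$ converts the inner $(id\otimes\mu)\circ\tilde{D}_1\circ(D\otimes id)$ into $D\circ(id\otimes\mu)$; composing gives $U\circ(id\otimes\mu)$. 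Axiom (\ref{catw2}) is entirely symmetric. For (\ref{catw3}), after the same simplification one gets $\tilde{U}_1\circ(U\otimes id)\circ(id\otimes U)=T_{A,A\otimes A}\circ D_{A,A\otimes A}\circ(id\otimes U)$ and, dually, $\tilde{U}_2\circ(id\otimes U)\circ(U\otimes id)=T_{A\otimes A,A}\circ D_{A\otimes A,A}\circ(U\otimes id)$. These two expressions are then bridged by the chain
\begin{align*}
T_{A,A\otimes A}\circ D_{A,A\otimes A}\circ(id\otimes T)\circ(id\otimes D)
&\overset{(\ref{partic1})}{=}T_{A,A\otimes A}\circ(id\otimes T)\circ D_{A,A\otimes A}\circ(id\otimes D)\\
&\overset{(\ref{T3})\text{ for }D}{=}T_{A,A\otimes A}\circ(id\otimes T)\circ D_{A\otimes A,A}\circ(D\otimes id)\\
&\overset{(\ref{T3})\text{ for }T}{=}T_{A\otimes A,A}\circ(T\otimes id)\circ D_{A\otimes A,A}\circ(D\otimes id)\\
&\overset{(\ref{partic2})}{=}T_{A\otimes A,A}\circ D_{A\otimes A,A}\circ(T\otimes id)\circ(D\otimes id),
\end{align*}
which is exactly what is needed.

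For the strong conditions, the key preliminary is that (\ref{partic1}) (resp.\ (\ref{partic2})) forces both $\tilde{D}_1$ and $\tilde{D}_2$ to commute with $id\otimes T$ (resp.\ $T\otimes id$): writing $D_{A,A\otimes A}=\tilde{D}_1\circ(D\otimes id)=(D\otimes id)\circ\tilde{D}_2$ and cancelling the invertible factor $D\otimes id$ in (\ref{partic1}) gives both at once, and similarly for (\ref{partic2}). With this in hand I compute $(U\otimes id)\circ\tilde{U}_2$: expand $U\otimes id$, push $D\otimes id$ past $T_{A\otimes A,A}$ by (\ref{partic4}), convert $(D\otimes id)\circ\tilde{D}_2$ into $\tilde{D}_1\circ(D\otimes id)$ via the strong property of $D$, slide $(D\otimes id)$ past $(id\otimes T^{-1})$ by tensor functoriality, commute $T\otimes id$ past $T_{A\otimes A,A}$ by (\ref{T1}), commute $\tilde{D}_1$ past $id\otimes T^{-1}$, and finally recognize $T_{A\otimes A,A}\circ(id\otimes T^{-1})=\tilde{T}_2$ and $(T\otimes id)\circ\tilde{T}_2=T_{A,A\otimes A}$. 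The result collapses to $T_{A,A\otimes A}\circ D_{A,A\otimes A}$, which is precisely $\tilde{U}_1\circ(U\otimes id)$, establishing (\ref{strongpseudo2}). Relation (\ref{strongpseudo1}) is handled by the mirror-image calculation, starting from $(id\otimes U)\circ\tilde{U}_1$ and using (\ref{partic3}), the strong property of $D$ in the form $(id\otimes D)\circ\tilde{D}_1=\tilde{D}_2\circ(id\otimes D)$, the commutation of $\tilde{D}_2$ with $T^{-1}\otimes id$ supplied by (\ref{partic2}), (\ref{T2}) for $T$, and the identities $T_{A,A\otimes A}\circ(T^{-1}\otimes id)=\tilde{T}_1$ and $(id\otimes T)\circ\tilde{T}_1=T_{A\otimes A,A}$; everything collapses to $T_{A\otimes A,A}\circ D_{A\otimes A,A}=\tilde{U}_2\circ(id\otimes U)$.

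The main obstacle is purely bookkeeping: each of the conditions (\ref{partic1})--(\ref{partic4}) produces several derived commutation relations (for the individual companions $\tilde{T}_i$, $\tilde{D}_i$), and one has to apply them in the right order so that the $T^{\pm1}$ and $D^{\pm1}$ factors either fuse into $U$ or cancel. The guiding principle is that both expressions $\tilde{U}_1\circ(U\otimes id)$ and $(U\otimes id)\circ\tilde{U}_2$ should simplify to the same ``symmetric'' form $T_{A,A\otimes A}\circ D_{A,A\otimes A}$, and likewise $\tilde{U}_2\circ(id\otimes U)$ and $(id\otimes U)\circ\tilde{U}_1$ to $T_{A\otimes A,A}\circ D_{A\otimes A,A}$; once this target is fixed, every rewriting step is forced.
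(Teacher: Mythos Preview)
Your approach is the same as the paper's: verify (\ref{catw1})--(\ref{catw3}) by unwinding the definitions, and for the strong conditions reduce both $(U\otimes id)\circ\tilde{U}_2$ and $\tilde{U}_1\circ(U\otimes id)$ to the common form $T_{A,A\otimes A}\circ D_{A,A\otimes A}$ (and symmetrically $T_{A\otimes A,A}\circ D_{A\otimes A,A}$ for the other pair). The paper works directly with $D_{A,A\otimes A}$, $D_{A\otimes A,A}$ rather than first extracting the individual commutations of $\tilde{D}_1,\tilde{D}_2$ with $id\otimes T$ and $T\otimes id$, but your derived relations are correct and the route is equivalent.

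There is one bookkeeping slip. In your chain for $(U\otimes id)\circ\tilde{U}_2$, the step ``commute $T\otimes id$ past $T_{A\otimes A,A}$ by (\ref{T1})'' is misplaced: after it you have $T_{A\otimes A,A}\circ(T\otimes id)\circ(id\otimes T^{-1})\circ\ldots$, so $T_{A\otimes A,A}$ is no longer adjacent to $(id\otimes T^{-1})$ and you cannot directly recognise $T_{A\otimes A,A}\circ(id\otimes T^{-1})=\tilde{T}_2$. Simply drop that step: without it you get $(T\otimes id)\circ T_{A\otimes A,A}\circ(id\otimes T^{-1})\circ D_{A,A\otimes A}=(T\otimes id)\circ\tilde{T}_2\circ D_{A,A\otimes A}=T_{A,A\otimes A}\circ D_{A,A\otimes A}$, as desired. (Alternatively, do what the paper does: keep (\ref{T1}) but follow it immediately with (\ref{T3}) to turn $T_{A\otimes A,A}\circ(T\otimes id)$ into $T_{A,A\otimes A}\circ(id\otimes T)$, which then cancels against $(id\otimes T^{-1})$.) The same remark applies to the use of (\ref{T2}) in the mirror chain for $(id\otimes U)\circ\tilde{U}_1$.
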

\begin{proof}
We check (\ref{catw1})--(\ref{catw3}) for $U$:
\begin{eqnarray*}
U\circ (id_A\otimes \mu )&=&T\circ D\circ (id_A\otimes \mu )\\
&\overset{(\ref{catw1})}{=}&(id_A\otimes \mu )\circ \tilde{T}_1\circ 
(T\otimes id_A)\circ \tilde{D}_1\circ (D\otimes id_A)\\
&=&(id_A\otimes \mu )\circ T_{A, A\otimes A}\circ \tilde{D}_1\circ 
(D\otimes id_A)\\
&=&(id_A\otimes \mu )\circ \tilde{U}_1\circ (U\otimes id_A), 
\end{eqnarray*}
\begin{eqnarray*}
U\circ (\mu \otimes id_A)&=&T\circ D\circ (\mu \otimes id_A)\\
&\overset{(\ref{catw2})}{=}&(\mu \otimes id_A)\circ \tilde{T}_2\circ 
(id_A\otimes T)\circ \tilde{D}_2\circ (id_A\otimes D)\\
&=&(\mu \otimes id_A)\circ T_{A\otimes A, A}\circ \tilde{D}_2\circ 
(id_A\otimes D)\\
&=&(\mu \otimes id_A)\circ \tilde{U}_2\circ (id_A\otimes U),
\end{eqnarray*}
\begin{eqnarray*}
\tilde{U}_1\circ (U\otimes id_A)\circ (id_A\otimes U)&=&
T_{A, A\otimes A}\circ \tilde{D}_1\circ (D\otimes id_A)
\circ (id_A\otimes T)\circ (id_A\otimes D)\\
&=&T_{A, A\otimes A}\circ D_{A, A\otimes A}\circ (id_A\otimes T)\circ 
(id_A\otimes D)\\
&\overset{(\ref{partic1})}{=}&T_{A, A\otimes A}\circ (id_A\otimes T)\circ 
D_{A, A\otimes A}\circ (id_A\otimes D)\\
&\overset{(\ref{T3})}{=}&T_{A\otimes A, A}\circ (T\otimes id_A)\circ 
D_{A\otimes A, A}\circ (D\otimes id_A)\\
&\overset{(\ref{partic2})}{=}&T_{A\otimes A, A}\circ D_{A\otimes A, A}\circ 
(T\otimes id_A)\circ (D\otimes id_A)\\
&=&T_{A\otimes A, A}\circ \tilde{D}_2\circ (id_A\otimes D)
\circ (T\otimes id_A)\circ (D\otimes id_A)\\
&=&\tilde{U}_2\circ (id_A\otimes U)\circ (U\otimes id_A),  
\end{eqnarray*}
proving that $U$ is a pseudotwistor for $A$. We assume now that  
(\ref{partic3}) and (\ref{partic4}) hold and we prove (\ref{strongpseudo1}) 
and (\ref{strongpseudo2}) for $U$:
\begin{eqnarray*}
(id_A\otimes U)\circ \tilde{U}_1&=&(id_A\otimes T)\circ (id_A\otimes D)
\circ T_{A, A\otimes A}\circ \tilde{D}_1\circ (T^{-1}\otimes id_A)\\
&\overset{(\ref{partic3})}{=}&(id_A\otimes T)\circ T_{A, A\otimes A}\circ 
(id_A\otimes D)\circ \tilde{D}_1\circ (T^{-1}\otimes id_A)\\
&=&(id_A\otimes T)\circ T_{A, A\otimes A}\circ 
D_{A\otimes A, A}\circ (T^{-1}\otimes id_A)\\ 
&\overset{(\ref{partic2})}{=}&(id_A\otimes T)\circ T_{A, A\otimes A}\circ 
(T^{-1}\otimes id_A)\circ D_{A\otimes A, A}\\ 
&\overset{(\ref{T3}),\; (\ref{T2})}{=}&T_{A\otimes A, A}\circ 
D_{A\otimes A, A}\\
&=&T_{A\otimes A, A}\circ \tilde{D}_2\circ (id_A\otimes D)\\
&=&\tilde{U}_2\circ (id_A\otimes U), 
\end{eqnarray*}
\begin{eqnarray*}
(U\otimes id_A)\circ \tilde{U}_2&=&(T\otimes id_A)\circ (D\otimes id_A)
\circ T_{A\otimes A, A}\circ \tilde{D}_2\circ (id_A\otimes T^{-1})\\
&\overset{(\ref{partic4})}{=}&
(T\otimes id_A)\circ T_{A\otimes A, A}\circ (D\otimes id_A)
\circ \tilde{D}_2\circ (id_A\otimes T^{-1})\\
&=&(T\otimes id_A)\circ T_{A\otimes A, A}\circ 
D_{A, A\otimes A}\circ (id_A\otimes T^{-1})\\
&\overset{(\ref{partic1})}{=}&
(T\otimes id_A)\circ T_{A\otimes A, A}\circ 
(id_A\otimes T^{-1})\circ D_{A, A\otimes A}\\
&\overset{(\ref{T3}),\; (\ref{T1})}{=}&
T_{A, A\otimes A}\circ D_{A, A\otimes A}\\
&=&T_{A, A\otimes A}\circ \tilde{D}_1\circ (D\otimes id_A)\\
&=&\tilde{U}_1\circ (U\otimes id_A),
\end{eqnarray*}
showing that $U$ is a strong pseudotwistor.
\end{proof}
\begin{corollary}
If $T$ is a strong pseudotwistor for an algebra $(A, \mu , u)$ in 
a monoidal category ${\cal C}$, then $T\circ T$ is also a strong 
pseudotwistor for $A$.
\end{corollary}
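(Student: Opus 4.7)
The plan is to deduce this corollary by a direct application of the preceding proposition with $D=T$. Setting $D=T$, the four hypotheses \eqref{partic1}--\eqref{partic4} collapse into just two distinct conditions, namely
\begin{eqnarray*}
&&T_{A, A\otimes A}\circ (id_A\otimes T)=(id_A\otimes T)\circ T_{A, A\otimes A}, \\
&&T_{A\otimes A, A}\circ (T\otimes id_A)=(T\otimes id_A)\circ T_{A\otimes A, A}.
\end{eqnarray*}
But these are precisely the relations \eqref{T2} and \eqref{T1} established in the preceding lemma for any strong pseudotwistor. So the hypotheses of the previous proposition are automatically satisfied when $T=D$, and we conclude that $T\circ T$ is a strong pseudotwistor.

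The only thing worth spelling out is what the companions of $T\circ T$ look like, since the reader might want to see them explicitly. According to the proposition applied with $U=T\circ T$, the companions are
\begin{eqnarray*}
&&\widetilde{(T\circ T)}_1=T_{A,A\otimes A}\circ \tilde{T}_1\circ (T^{-1}\otimes id_A), \\
&&\widetilde{(T\circ T)}_2=T_{A\otimes A, A}\circ \tilde{T}_2\circ (id_A\otimes T^{-1}).
\end{eqnarray*}
These are well defined because $T$ is assumed invertible as part of being a strong pseudotwistor, and the verification of \eqref{catw1}--\eqref{catw3} and \eqref{strongpseudo1}--\eqref{strongpseudo2} for $T\circ T$ is then exactly the content of the proposition.

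There is no real obstacle here; the work has already been done in the proposition and the lemma, and the whole point of isolating the auxiliary relations \eqref{T1}--\eqref{T2} is precisely to make the iteration $T\mapsto T\circ T$ fall out for free. The corollary can thus be written in a single sentence: apply the proposition with $D=T$, observing that \eqref{partic1}=\eqref{partic3} is \eqref{T2} and \eqref{partic2}=\eqref{partic4} is \eqref{T1}.
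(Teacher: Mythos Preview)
Your proposal is correct and is exactly the intended argument: the paper states this corollary immediately after the proposition with no separate proof, so applying the proposition with $D=T$ and noting that \eqref{partic1}--\eqref{partic4} reduce to \eqref{T1} and \eqref{T2} is precisely the point.
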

\begin{proposition}
Let ${\cal C}$ be a monoidal category, $(A, \mu , u)$ an algebra in 
${\cal C}$ and $T:A\otimes A\rightarrow A\otimes A$ a strong 
pseudotwistor for $A$ such that the companions $\tilde{T}_1$ and 
$\tilde{T}_2$ are invertible. Then the inverse $V:=T^{-1}$ is also 
a strong pseudotwistor for $A$, with companions 
$\tilde{V}_1=\tilde{T}_2^{-1}$ and $\tilde{V}_2=\tilde{T}_1^{-1}$.
\end{proposition}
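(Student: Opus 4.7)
The strategy is to verify each defining condition for $V=T^{-1}$ by taking inverses of the corresponding identities for $T$ and rearranging with help of the strong pseudotwistor axioms. Since $T$ is invertible by hypothesis and $\tilde{T}_1,\tilde{T}_2$ are invertible by assumption, all the morphisms we need to invert are indeed invertible.

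First, the unit conditions for $V$ follow immediately from those for $T$: apply $T^{-1}$ on the left to $T\circ (u\otimes id_A)=u\otimes id_A$ and to $T\circ (id_A\otimes u)=id_A\otimes u$. Next I would establish the strong conditions for $V$, as they are the cheapest and will be useful to streamline the rest. Taking the inverse of both sides of (\ref{strongpseudo1}) for $T$ gives $(id_A\otimes T^{-1})\circ \tilde{T}_2^{-1}=\tilde{T}_1^{-1}\circ(id_A\otimes T^{-1})$, which is exactly (\ref{strongpseudo1}) for $V$ with $\tilde{V}_1=\tilde{T}_2^{-1}$, $\tilde{V}_2=\tilde{T}_1^{-1}$; similarly, inverting (\ref{strongpseudo2}) for $T$ gives (\ref{strongpseudo2}) for $V$.

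To verify (\ref{catw1}) for $V$, I would start from (\ref{catw1}) for $T$, rewrite it as
\[
(id_A\otimes \mu)=T^{-1}\circ(id_A\otimes \mu)\circ \tilde{T}_1\circ(T\otimes id_A),
\]
then compose on the right by $(T^{-1}\otimes id_A)\circ\tilde{T}_1^{-1}$ to obtain
\[
(id_A\otimes \mu)\circ(T^{-1}\otimes id_A)\circ\tilde{T}_1^{-1}=T^{-1}\circ(id_A\otimes \mu).
\]
Using the already-established $(T^{-1}\otimes id_A)\circ\tilde{T}_1^{-1}=\tilde{T}_2^{-1}\circ(T^{-1}\otimes id_A)$ (i.e.\ (\ref{strongpseudo2}) for $V$) converts the left-hand side into the required form for $V$. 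The verification of (\ref{catw2}) for $V$ is entirely analogous, starting from (\ref{catw2}) for $T$ and using (\ref{strongpseudo1}) for $V$ to commute $(id_A\otimes T^{-1})$ past $\tilde{T}_2^{-1}$.

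The remaining axiom (\ref{catw3}) for $V$ reads
\[
\tilde{T}_2^{-1}\circ(T^{-1}\otimes id_A)\circ(id_A\otimes T^{-1})=\tilde{T}_1^{-1}\circ(id_A\otimes T^{-1})\circ(T^{-1}\otimes id_A),
\]
which is obtained by simply inverting both sides of (\ref{catw3}) for $T$ and then applying the strong conditions for $V$ twice on each side to move the factors into the desired order; this is the step that requires the most bookkeeping, but it is still only a chain of rearrangements. The only genuine obstacle, minor as it is, is keeping the order of composition straight when inverting products, but once (\ref{strongpseudo1}) and (\ref{strongpseudo2}) are in place for $V$ the rest of the verification is mechanical.
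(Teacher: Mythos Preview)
Your proof is correct and follows the same approach the paper indicates (the paper's proof is merely ``Straightforward computation, using (\ref{catw1})--(\ref{catw3}) for $T$ together with (\ref{strongpseudo1}) and (\ref{strongpseudo2})''). You have supplied exactly the detailed bookkeeping the paper omits, including the key observation that the strong conditions for $V$ should be established first and then used to commute factors into the required order in (\ref{catw1}), (\ref{catw2}) and (\ref{catw3}).
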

\begin{proof}
Straightforward computation, using (\ref{catw1})--(\ref{catw3}) for $T$ 
together with (\ref{strongpseudo1}) and (\ref{strongpseudo2}).
\end{proof}
\begin{remark}\label{lulu}
Let ${\cal C}$ be a monoidal category, $(A, \mu , u)$ an algebra in 
${\cal C}$, $T:A\otimes A\rightarrow A\otimes A$ a strong pseudotwistor 
for $A$ and ${\cal D}$ a laycle on ${\cal C}$. Then $T$ and 
$D:={\cal D}_{A, A}$ satisfy (\ref{partic1}) and (\ref{partic2}), 
hence $T\circ D$ is a pseudotwistor for $A$. 
\end{remark}

Our next result is the analogue for pseudotwistors of the fact  
that if $\sigma $, $\sigma '$ are cohomologous lazy cocycles on a 
Hopf algebra $H$ then the algebras $H(\sigma )$ and $H(\sigma ')$ are 
isomorphic:
\begin{proposition}
Let ${\cal C}$ be a monoidal category, $(A, \mu , u)$ an algebra in 
${\cal C}$, $T:A\otimes A\rightarrow A\otimes A$ a strong pseudotwistor 
for $A$ and $R_X:X\rightarrow X$ a family of natural isomorphisms in 
${\cal C}$ such that $R_I=id_I$. Then we have an algebra isomorphism 
\begin{eqnarray*}
&&R_A:A^{T\circ D^1(R)_{A, A}}\simeq A^T.
\end{eqnarray*} 
\end{proposition}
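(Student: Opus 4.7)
The plan is to observe that the claim is essentially a one-line consequence of the naturality of the family $\{R_X\}_{X\in\mathcal{C}}$, once we unpack the definition of $D^1(R)$. Before checking that $R_A$ is an algebra map, I would first justify that the source makes sense: since $D^1(R)$ is a laycle on $\mathcal{C}$ (by the proposition preceding Proposition~\ref{lazyco}), Remark~\ref{lulu} applied with $\mathcal{D}=D^1(R)$ guarantees that $T\circ D^1(R)_{A,A}$ is again a pseudotwistor for $A$, so $A^{T\circ D^1(R)_{A,A}}$ is indeed an algebra in $\mathcal{C}$.

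Next I would verify the three conditions for $R_A$ to be an algebra isomorphism. Bijectivity is free, since each $R_X$ is a natural isomorphism by hypothesis. Unit preservation is immediate from naturality of $R$ at the morphism $u\colon I\to A$: it gives $R_A\circ u = u\circ R_I = u$, because $R_I=id_I$.

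The substantive point is multiplicativity. Using the second presentation
\[
D^1(R)_{A,A}=R_{A\otimes A}^{-1}\circ (R_A\otimes R_A)
\]
from formula (\ref{D1R}), the identity I need to establish is
\[
R_A\circ \mu\circ T\circ R_{A\otimes A}^{-1}\circ (R_A\otimes R_A)
=\mu\circ T\circ (R_A\otimes R_A),
\]
and cancelling the invertible morphism $R_A\otimes R_A$ on the right reduces this to $R_A\circ (\mu\circ T)=(\mu\circ T)\circ R_{A\otimes A}$, which is precisely naturality of $R$ at the morphism $\mu\circ T\colon A\otimes A\to A$.

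There is really no obstacle to speak of: the only subtlety is to remember that $D^1(R)_{A,A}$ admits the two equivalent expressions recorded in (\ref{D1R}), and to choose the one that brings $R_{A\otimes A}^{-1}$ adjacent to $\mu\circ T$ so that the naturality square applies directly. In this sense the result is best viewed as the categorical shadow of the standard fact from Hopf algebra theory that cohomologous lazy $2$-cocycles yield isomorphic twisted algebras.
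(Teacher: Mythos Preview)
Your proof is correct and follows essentially the same route as the paper: both first invoke Remark~\ref{lulu} to justify that $T\circ D^1(R)_{A,A}$ is a pseudotwistor, then verify multiplicativity by expanding $D^1(R)_{A,A}=R_{A\otimes A}^{-1}\circ(R_A\otimes R_A)$ and applying naturality of $R$. The only cosmetic difference is that the paper splits the naturality argument into two steps (at $\mu$ and at $T$ separately) whereas you apply it once at the composite $\mu\circ T$; you also make the unit check explicit, which the paper leaves implicit.
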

\begin{proof}
Note first that $T\circ D^1(R)_{A, A}$ is a pseudotwistor by  
Remark \ref{lulu}. We compute: 
\begin{eqnarray*}
R_A\circ \mu \circ T\circ D^1(R)_{A, A}&=&R_A\circ \mu \circ T\circ 
R_{A\otimes A}^{-1}\circ (R_A\otimes R_A)\\
&\overset{naturality\;of\;R}{=}&
\mu \circ R_{A\otimes A}\circ T\circ R_{A\otimes A}^{-1}\circ 
(R_A\otimes R_A)\\
&\overset{naturality\;of\;R}{=}&\mu \circ T\circ (R_A\otimes R_A), 
\end{eqnarray*}
finishing the proof.
\end{proof}

If $T$ is a laycle on a monoidal category ${\cal C}$, then, by \cite{brug}, 
$T$ is a twine if and only if the following condition is satisfied:
\begin{eqnarray}
&&(T^f_{X, Y, Z}\otimes id_W)\circ (id_X\otimes T^b_{Y, Z, W})=
(id_X\otimes T^b_{Y, Z, W})\circ (T^f_{X, Y, Z}\otimes id_W), 
\label{caract}
\end{eqnarray}
for all $X, Y, Z, W\in {\cal C}$. Note that the families $T^f$, $T^b$ 
coincide respectively to the families $A$, $B$ from the Definition 
\ref{purebraided} of a pure-braided structure, and (\ref{caract}) 
coincides with (\ref{cab}). We are thus led to the following 
concept and terminology:
\begin{definition}
Let ${\cal C}$ be a monoidal category, $A$ an algebra in ${\cal C}$ and 
$T:A\otimes A\rightarrow A\otimes A$ a pseudotwistor with companions 
$\tilde{T}_1$ and $\tilde{T}_2$. We call $T$ a {\bf pure pseudotwistor} 
if 
\begin{eqnarray}
&&(\tilde{T}_2\otimes id_A)\circ (id_A\otimes \tilde{T}_1)=
(id_A\otimes \tilde{T}_1)\circ (\tilde{T}_2\otimes id_A). \label{purepseudo}
\end{eqnarray}
\end{definition}
\begin{corollary}
A twine on a monoidal category ${\cal C}$ induces a pure pseudotwistor 
on every algebra $A$ in ${\cal C}$. In particular, if $c$ is a braiding 
on ${\cal C}$ then $c^2_{A, A}$ is a pure pseudotwistor for $A$.  
\end{corollary}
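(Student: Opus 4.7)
The plan is to invoke \prref{layclepseudo} together with the characterization of twines among laycles given by condition (\ref{caract}), which was recalled from \cite{brug} in the paragraph preceding the definition of pure pseudotwistor.

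First, since a twine is in particular a laycle, \prref{layclepseudo} already tells us that $T_{A,A}$ is a pseudotwistor for $A$, with companions $\tilde{T}_1 = T^b_{A,A,A}$ and $\tilde{T}_2 = T^f_{A,A,A}$. So the only thing left to verify is the pure pseudotwistor axiom (\ref{purepseudo}), namely
\begin{eqnarray*}
&&(T^f_{A,A,A}\otimes id_A)\circ (id_A\otimes T^b_{A,A,A})=(id_A\otimes T^b_{A,A,A})\circ (T^f_{A,A,A}\otimes id_A).
\end{eqnarray*}
But this is just condition (\ref{caract}) specialized to $X=Y=Z=W=A$, and (\ref{caract}) holds for any twine by the characterization recalled from \cite{brug}. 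Thus $T_{A,A}$ is pure.

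For the ``in particular'' statement, recall that whenever $c$ is a braiding on ${\cal C}$, the double braiding $c^2_{X,Y}:=c_{Y,X}\circ c_{X,Y}$ is a twine (this was noted in the Introduction and is the motivating example from \cite{brug}). Applying the first part of the corollary to this twine, we conclude that $c^2_{A,A}$ is a pure pseudotwistor on every algebra $A$ in ${\cal C}$.

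There is no real obstacle here: the entire content of the statement is that the twine axiom (\ref{twine3}) evaluated on four copies of $A$ is exactly (\ref{purepseudo}) for the companions supplied by \prref{layclepseudo}. The proof is essentially a one-line reduction, and no further computation is needed.
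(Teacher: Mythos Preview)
Your proposal is correct and is exactly the argument the paper has in mind: the corollary is placed immediately after the recalled characterization (\ref{caract}) and the observation that $\tilde{T}_1=T^b_{A,A,A}$, $\tilde{T}_2=T^f_{A,A,A}$, so that (\ref{purepseudo}) is just (\ref{caract}) with $X=Y=Z=W=A$. The only minor quibble is your closing remark that ``the twine axiom (\ref{twine3}) \dots\ is exactly (\ref{purepseudo})'': what you actually use is (\ref{caract}), which is equivalent to (\ref{twine3}) for laycles by \cite{brug}, not (\ref{twine3}) itself.
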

\begin{remark}{\em 
Obviously, a pseudotwistor for which $\tilde{T}_1=\tilde{T}_2=
id_{A\otimes A\otimes A}$ is pure. Here are some  
concrete (but nonunital) examples of such pseudotwistors:\\
(i) take $A$ an associative algebra,  
$R=R^1\otimes R^2\in A\otimes A$ and define $T(a\otimes b)=aR^1\otimes 
R^2b$, for all $a, b\in A$.\\
(ii) take $A$ an associative algebra, $f:A\rightarrow A$  
a linear map satisfying $f(ab)=af(b)$ for all $a, b\in A$, and  
$T(a\otimes b)=f(a)\otimes b$. If instead $f$ satisfies $f(ab)=f(a)b$, then 
take $T(a\otimes b)=a\otimes f(b)$.\\
(iii) take $A$ an associative algebra, $\delta :A\rightarrow A\otimes A$ 
a linear map such that $\delta (ab)=(a\otimes 1)\delta (b)$ for all 
$a, b\in A$ and $T:A\otimes A\rightarrow A\otimes A$, $T(a\otimes b)=
\delta (a)(1\otimes b)$. If instead $\delta $ satisfies 
$\delta (ab)=\delta (a)(1\otimes b)$, then take $T(a\otimes b)=
(a\otimes 1)\delta (b)$. 

Note that example (i) was inspired by a construction in 
\cite{street},  
while (ii) and (iii) are related to some constructions in \cite{leroux} 
involving so-called (anti-) dipterous algebras.}
\end{remark} 
\begin{example} {\em 
If $A$ is an associative algebra and $T:A\otimes A\rightarrow A\otimes A$ is 
a twistor, then it is easy to see that $T$ is pure. }
\end{example}
\begin{example} {\em 
We recall some facts from \cite{lpvo}. 
Let $(\Omega , d)$ be a DG algebra, that is $\Omega =\bigoplus _{n\geq 0}
\Omega ^n$ is a graded algebra and $d:\Omega \rightarrow \Omega $ is a 
linear map with $d(\Omega ^n)\subseteq \Omega ^{n+1}$ for all  
$n\geq 0$, $d^2=0$ and $d(\omega \zeta )=d(\omega )\zeta +(-1)^{|\omega |}
\omega d(\zeta )$ for all homogeneous $\omega $ and $\zeta $, where 
$|\omega |$ is the degree of $\omega $. The Fedosov product  
(\cite{fedosov}, \cite{cuqui}), given by 
$\;\omega \circ \zeta =\omega \zeta -(-1)^{|\omega |}
d(\omega )d(\zeta )\;$,    
for homogeneous $\omega $ and $\zeta $, gives a new associative algebra 
structure on $\Omega $. We consider ${\cal C}$ to be the monoidal category 
of $\mathbb{Z}_2$-graded vector spaces, and regard $\Omega $ as a  
$\mathbb{Z}_2$-graded algebra (i.e. an algebra in ${\cal C}$) by putting 
even components in degree zero and odd components in degree one. 
Define the linear map  
\begin{eqnarray*}
&&T:\Omega \otimes \Omega \rightarrow \Omega \otimes \Omega , \;\;\;
T(\omega \otimes \zeta )=\omega \otimes \zeta -(-1)^{|\omega |}
d(\omega )\otimes d(\zeta ), 
\end{eqnarray*}
for homogeneous $\omega $ and $\zeta $. Then $T$ is a pseudotwistor 
for $\Omega $ in ${\cal C}$, affording the Fedosov product. Its 
companions are given (for homogeneous $\omega $, $\zeta $, $\eta $) by 
\begin{eqnarray*}
&&\tilde{T}_1(\omega \otimes \zeta \otimes \eta )=
\tilde{T}_2(\omega \otimes \zeta \otimes \eta )
=\omega \otimes \zeta  
\otimes \eta -(-1)^{|\omega |+|\zeta |}d(\omega )\otimes \zeta \otimes 
d(\eta ).
\end{eqnarray*} 
We claim that $T$ is a pure pseudotwistor. Indeed, a straightforward 
computation shows that 
\begin{eqnarray*}
&&(\tilde{T}_2\otimes id)\circ (id\otimes \tilde{T}_1)
(\omega \otimes \zeta \otimes \eta \otimes \nu )=
(id \otimes \tilde{T}_1)\circ (\tilde{T}_2\otimes id)
(\omega \otimes \zeta \otimes \eta \otimes \nu )\\
&&\;\;\;\;\;\;\;=\omega \otimes \zeta \otimes \eta \otimes \nu  
-(-1)^{|\omega |+|\zeta |}d(\omega )\otimes \zeta \otimes d(\eta )
\otimes \nu \\
&&\;\;\;\;\;\;\;-(-1)^{|\zeta |+|\eta |}\omega \otimes d(\zeta )\otimes \eta  
\otimes d(\nu )
-(-1)^{|\omega |+|\eta |}d(\omega )\otimes d(\zeta )\otimes d(\eta )
\otimes d(\nu ), 
\end{eqnarray*} 
for all homogeneous $\omega $, $\zeta $, $\eta $, $\nu $.}
\end{example}

We recall the following result from \cite{lpvo}:
\begin{proposition}(\cite{lpvo}) \label{twm}
Let $(A, \mu , u)$ be an algebra in a monoidal category  
${\cal C}$, let $R, P:A\otimes A\rightarrow A\otimes A$ twisting maps  
between $A$ and itself such that $R$ is invertible, and assume that 
\begin{eqnarray}
&&(P\otimes id_A)\circ (id_A\otimes P)\circ (P\otimes id_A)=
(id_A\otimes P)\circ (P\otimes id_A)\circ (id_A\otimes P), \label{br1} \\
&&(R\otimes id_A)\circ (id_A\otimes R)\circ (R\otimes id_A)=
(id_A\otimes R)\circ (R\otimes id_A)\circ (id_A\otimes R), \label{br2} \\
&&(P\otimes id_A)\circ (id_A\otimes P)\circ (R\otimes id_A)=
(id_A\otimes R)\circ (P\otimes id_A)\circ (id_A\otimes P), \label{br3} \\
&&(R\otimes id_A)\circ (id_A\otimes P)\circ (P\otimes id_A)=
(id_A\otimes P)\circ (P\otimes id_A)\circ (id_A\otimes R). \label{br4} 
\end{eqnarray}
Define $T:A\otimes A\rightarrow A\otimes A$, $T:=R^{-1}\circ P$. Then 
$T$ is a pseudotwistor with companions 
\begin{eqnarray*}
&&\tilde{T}_1=(R^{-1}\otimes id_A)\circ (id_A\otimes T)\circ 
(R\otimes id_A), \;\;\;
\tilde{T}_2=(id_A\otimes R^{-1})\circ (T\otimes id_A)\circ (id_A\otimes R). 
\end{eqnarray*}
\end{proposition}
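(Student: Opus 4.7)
The plan is to verify the four conditions for $T = R^{-1}\circ P$ to be a pseudotwistor with the proposed companions $\tilde{T}_1$ and $\tilde{T}_2$: the unit conditions $T\circ(u\otimes id_A) = u\otimes id_A$ and $T\circ(id_A\otimes u) = id_A\otimes u$, together with the axioms (\ref{catw1})--(\ref{catw3}). The unit conditions are immediate: $P$ and $R$ (hence $R^{-1}$) preserve units on each slot by the twisting-map axioms, so $T$ does.

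For (\ref{catw1}), first use $R\circ T = P$ to collapse the outermost $R$ in $\tilde{T}_1$, obtaining $\tilde{T}_1\circ(T\otimes id_A) = (R^{-1}\otimes id_A)\circ(id_A\otimes T)\circ(P\otimes id_A)$, and then expand the remaining $T$ as $R^{-1}\circ P$ to reach the normal form $(R^{-1}\otimes id_A)\circ(id_A\otimes R^{-1})\circ(id_A\otimes P)\circ(P\otimes id_A)$. Composing with $(id_A\otimes\mu)$ on the left, the twisting-map identity for $R^{-1}$ (obtained from that of $R$ by inversion) collapses $(id_A\otimes\mu)\circ(R^{-1}\otimes id_A)\circ(id_A\otimes R^{-1})$ to $R^{-1}\circ(\mu\otimes id_A)$; then the twisting-map identity for $P$ collapses $(\mu\otimes id_A)\circ(id_A\otimes P)\circ(P\otimes id_A)$ to $P\circ(id_A\otimes\mu)$. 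Combining, the whole expression equals $R^{-1}\circ P\circ(id_A\otimes\mu) = T\circ(id_A\otimes\mu)$. The proof of (\ref{catw2}) is completely symmetric, using the twisting-map axioms in the dual slot. Notably, neither (\ref{catw1}) nor (\ref{catw2}) requires the braid relations (\ref{br1})--(\ref{br4}).

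The core of the argument is (\ref{catw3}). Using $R\circ T = P$ and $T = R^{-1}\circ P$, both $\tilde{T}_1\circ(T\otimes id_A)\circ(id_A\otimes T)$ and $\tilde{T}_2\circ(id_A\otimes T)\circ(T\otimes id_A)$ unfold into explicit six-fold composites on $A\otimes A\otimes A$. With the shorthand $\alpha = R\otimes id_A$, $\beta = id_A\otimes R$, $\gamma = P\otimes id_A$, $\delta = id_A\otimes P$, the left-hand side equals $\alpha^{-1}\beta^{-1}\delta\gamma\beta^{-1}\delta$ and the right-hand side equals $\beta^{-1}\alpha^{-1}\gamma\delta\alpha^{-1}\gamma$. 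The equality is established in four moves, each using one of the braid relations: first apply (\ref{br4}) in the form $\delta\gamma\beta^{-1} = \alpha^{-1}\delta\gamma$ to the interior $\beta^{-1}$ of the left-hand composite, yielding $\alpha^{-1}\beta^{-1}\alpha^{-1}\delta\gamma\delta$; next apply the Yang--Baxter relation (\ref{br1}) for $P$ to replace $\delta\gamma\delta$ by $\gamma\delta\gamma$; then apply (\ref{br2}), inverted to $\alpha^{-1}\beta^{-1}\alpha^{-1} = \beta^{-1}\alpha^{-1}\beta^{-1}$, to the three leading $R^{-1}$'s; finally apply (\ref{br3}) in the form $\beta^{-1}\gamma\delta = \gamma\delta\alpha^{-1}$ to the remaining $\beta^{-1}$, producing exactly the right-hand composite.

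The main obstacle is the bookkeeping in (\ref{catw3}): each of (\ref{br1})--(\ref{br4}) enters exactly once and the order of application is essentially forced by where the $R^{-1}$'s and $P$'s sit. The design of the companions as $R^{-1}(-)R$ conjugates of $T$ is precisely what makes the mixed braid relations (\ref{br3}) and (\ref{br4}) interpolate between $P$-strands and $R^{-1}$-strands so that the two composites coincide. Once this sequence is identified, the verification is mechanical and the proposition follows.
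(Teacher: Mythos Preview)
Your verification is correct. Note, however, that the paper does not give its own proof of this proposition: it is quoted verbatim from \cite{lpvo} (``We recall the following result from \cite{lpvo}''), so there is nothing in the present paper to compare your argument against. Your direct check of (\ref{catw1})--(\ref{catw3}) is exactly the sort of computation one would expect in the original reference; in particular, your four-step chain for (\ref{catw3}), using each of (\ref{br1})--(\ref{br4}) once, is clean and correct, and your observation that (\ref{catw1}) and (\ref{catw2}) follow from the twisting-map axioms alone (without the braid relations) is accurate.
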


Our next result is the analogue for pseudotwistors of the fact from 
\cite{brug} that a family of the type $T_{X, Y}=c'_{Y, X}\circ c_{X, Y}$, 
with $c$, $c'$ braidings, is a twine: 
\begin{proposition} \label{purestrong}
Assume that the hypotheses of Proposition \ref{twm} hold. Then:\\
(i) $T$ is a pure pseudotwistor; \\
(ii) assume that moreover $P$ is also invertible and 
\begin{eqnarray} 
&&(P\otimes id_A)\circ (id_A\otimes R)\circ (R\otimes id_A)=
(id_A\otimes R)\circ (R\otimes id_A)\circ (id_A\otimes P), \label{br5} \\
&&(R\otimes id_A)\circ (id_A\otimes R)\circ (P\otimes id_A)=
(id_A\otimes P)\circ (R\otimes id_A)\circ (id_A\otimes R) \label{br6} 
\end{eqnarray}
(these conditions appear in \cite{lpvo} too and they imply that $R$ is 
also a twisting map between $A^T$ and itself). Then $T$ is a strong 
pseudotwistor. 
\end{proposition}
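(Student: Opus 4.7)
My plan is to translate everything into the standard leg-notation on $A^{\otimes 3}$ and $A^{\otimes 4}$: set $T_{ij}=R_{ij}^{-1}P_{ij}$ and observe that the formulas for the companions read
\begin{eqnarray*}
\tilde{T}_1=R_{12}^{-1}\,T_{23}\,R_{12}=R_{12}^{-1}R_{23}^{-1}P_{23}R_{12}, \qquad
\tilde{T}_2=R_{23}^{-1}\,T_{12}\,R_{23}=R_{23}^{-1}R_{12}^{-1}P_{12}R_{23}.
\end{eqnarray*}
From here (i) and (ii) become purely combinatorial identities in these operators, the only non-trivial inputs being the braid-type relations (\ref{br1})--(\ref{br6}).

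For (i), I would expand both sides of (\ref{purepseudo}) on $A^{\otimes 4}$. Using $(\tilde T_2\otimes id_A)$ gives $R_{23}^{-1}R_{12}^{-1}P_{12}R_{23}$ on factors $1,2,3$, while $(id_A\otimes \tilde T_1)$ gives $R_{23}^{-1}R_{34}^{-1}P_{34}R_{23}$ on factors $2,3,4$. The inner pair $R_{23}R_{23}^{-1}$ cancels and the resulting words involve $P_{12},P_{34},R_{12}^{\pm 1},R_{34}^{\pm 1}$, all acting on disjoint pairs of tensor factors. Commuting the disjoint operators one checks that both LHS and RHS reduce to
\[
R_{23}^{-1}R_{34}^{-1}P_{34}R_{12}^{-1}P_{12}R_{23},
\]
so purity is automatic from the form of the companions and does not even require the braid conditions (\ref{br1})--(\ref{br4}).

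For (ii), invertibility of $T=R^{-1}\circ P$ is immediate once $P$ is invertible, with $T^{-1}=P^{-1}\circ R$. For the first identity $\tilde T_2\circ(id_A\otimes T)=(id_A\otimes T)\circ\tilde T_1$, I would expand:
\begin{eqnarray*}
\tilde T_2\circ T_{23}&=&R_{23}^{-1}R_{12}^{-1}P_{12}P_{23},\\
T_{23}\circ\tilde T_1&=&R_{23}^{-1}P_{23}R_{12}^{-1}R_{23}^{-1}P_{23}R_{12}.
\end{eqnarray*}
After cancelling $R_{23}^{-1}$ on the left and multiplying by $R_{12}$, I would rewrite (\ref{br5}) as $R_{12}P_{23}R_{12}^{-1}=R_{23}^{-1}P_{12}R_{23}$; substituting this converts the RHS into $R_{23}^{-1}P_{12}P_{23}R_{12}$, so the desired identity becomes $R_{23}P_{12}P_{23}=P_{12}P_{23}R_{12}$, which is precisely (\ref{br3}). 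The second identity $\tilde T_1\circ(T\otimes id_A)=(T\otimes id_A)\circ\tilde T_2$ is completely symmetric: after the analogous cancellations one uses (\ref{br6}) in the equivalent form $R_{23}P_{12}R_{23}^{-1}=R_{12}^{-1}P_{23}R_{12}$ to reduce the identity to $R_{12}P_{23}P_{12}=P_{23}P_{12}R_{23}$, which is (\ref{br4}).

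There is no real obstacle here; the only thing to be careful about is keeping the subscripts straight and keeping track of which commutations are free (disjoint legs) and which require an actual hypothesis. The upshot is that purity in (i) comes for free, while strongness in (ii) uses exactly the two additional compatibilities (\ref{br5}), (\ref{br6}) together with the pieces (\ref{br3}), (\ref{br4}) of the data of Proposition~\ref{twm}.
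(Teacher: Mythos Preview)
Your proof is correct and follows essentially the same approach as the paper's: for (i) both arguments cancel the inner $R_{23}R_{23}^{-1}$ and then use that operators on disjoint tensor legs commute, and for (ii) both reduce the two strong-pseudotwistor identities, via (\ref{br5}) and (\ref{br6}) respectively, to the braid-type relations (\ref{br3}) and (\ref{br4}). The only difference is notational (your leg-index calculus versus the paper's $\otimes$-notation), together with your side observation that the purity check in (i) does not actually invoke (\ref{br1})--(\ref{br4}), which is implicit in the paper's computation as well.
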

\begin{proof}
We check (\ref{purepseudo}): 
\begin{eqnarray*}
(\tilde{T}_2\otimes id_A)\circ (id_A\otimes \tilde{T}_1)&=&
(id_A\otimes R^{-1}\otimes id_A)\circ (T\otimes id_A\otimes id_A)\circ 
(id_A\otimes R\otimes id_A)\\
&&\circ (id_A\otimes R^{-1}\otimes id_A)\circ (id_A\otimes id_A\otimes T)
\circ (id_A\otimes R\otimes id_A)\\
&=&(id_A\otimes R^{-1}\otimes id_A)\circ (id_A\otimes id_A\otimes T)
\circ (T\otimes id_A\otimes id_A)\\
&&\circ (id_A\otimes R\otimes id_A)\\
&=&(id_A\otimes R^{-1}\otimes id_A)\circ (id_A\otimes id_A\otimes T)\circ 
(id_A\otimes R\otimes id_A)\\
&&\circ (id_A\otimes R^{-1}\otimes id_A)
\circ (T\otimes id_A\otimes id_A)\circ (id_A\otimes R\otimes id_A)\\
&=&(id_A\otimes \tilde{T}_1)\circ (\tilde{T}_2\otimes id_A).
\end{eqnarray*}
Assume now that $P$ is invertible and (\ref{br5}), (\ref{br6}) hold.  
Obviously $T$ is invertible, and we only have to check (\ref{strongpseudo1}) 
and (\ref{strongpseudo2}):
\begin{eqnarray*}
\tilde{T}_2\circ (id_A\otimes T)&=&
(id_A\otimes R^{-1})\circ (T\otimes id_A)\circ (id_A\otimes R)\circ 
(id_A\otimes T)\\
&=&(id_A\otimes R^{-1})\circ (R^{-1}\otimes id_A)\circ (P\otimes id_A)
\circ (id_A\otimes P)\\
&\overset{(\ref{br5})}{=}&
(id_A\otimes R^{-1})\circ (id_A\otimes P)\circ (R^{-1}\otimes id_A)\circ 
(id_A\otimes R^{-1})\\
&&(P^{-1}\otimes id_A)\circ (id_A\otimes R)
\circ (P\otimes id_A)\circ (id_A\otimes P)\\
&\overset{(\ref{br3})}{=}&
(id_A\otimes R^{-1})\circ (id_A\otimes P)\circ (R^{-1}\otimes id_A)\\
&&\circ (id_A\otimes R^{-1})\circ (id_A\otimes P)\circ (R\otimes id_A)\\
&=&(id_A\otimes T)\circ \tilde{T}_1, 
\end{eqnarray*}
\begin{eqnarray*}
\tilde{T}_1\circ (T\otimes id_A)&=&(R^{-1}\otimes id_A)\circ (id_A\otimes T)
\circ (R\otimes id_A)\circ (T\otimes id_A)\\
&=&(R^{-1}\otimes id_A)\circ (id_A\otimes R^{-1})\circ (id_A\otimes P)
\circ (P\otimes id_A)\\
&\overset{(\ref{br6})}{=}&
(R^{-1}\otimes id_A)\circ (P\otimes id_A)\circ (id_A\otimes R^{-1})\circ 
(R^{-1}\otimes id_A)\\
&&(id_A\otimes P^{-1})\circ (R\otimes id_A)
\circ (id_A\otimes P)\circ (P\otimes id_A)\\
&\overset{(\ref{br4})}{=}&
(R^{-1}\otimes id_A)\circ (P\otimes id_A)\circ (id_A\otimes R^{-1})\\
&&\circ (R^{-1}\otimes id_A)\circ (P\otimes id_A)\circ (id_A\otimes R)\\
&=&(T\otimes id_A)\circ \tilde{T}_2,
\end{eqnarray*}
finishing the proof.
\end{proof}
\begin{remark}
If $T$ is a {\em braided twistor} as introduced in \cite{lpvo}, 
a computation identical to the one in the proof of Proposition 
\ref{purestrong} (i) shows that $T$ is a pure pseudotwistor.   
\end{remark}
\begin{example} {\em 
Let $A$ be an algebra and ${\cal F}$ a braid system over $A$ as introduced 
by Durdevich in \cite{durdevich}, that is a collection of bijective  
twisting maps between $A$ and itself, satisfying the condition 
\begin{eqnarray*}
&&(\alpha \otimes id_A)\circ (id_A\otimes \beta )\circ (\gamma \otimes id_A)= 
(id_A\otimes \gamma )\circ (\beta \otimes id_A)\circ (id_A\otimes \alpha ), 
\;\;\;\forall \;\alpha , \beta , \gamma \in {\cal F}.
\end{eqnarray*}
For $\alpha , \beta \in {\cal F}$ define the map  
$T_{\alpha , \beta }:A\otimes A\rightarrow A\otimes A$,  
$T_{\alpha , \beta }:=\alpha ^{-1}\circ \beta $. By \cite{lpvo} we know  
that $T$ is a pseudotwistor for $A$, and by Proposition 
\ref{purestrong} it follows that it is a pure strong pseudotwistor.} 
\end{example}

We introduce now the categorical version of Borcherds' $R$-matrices:
\begin{proposition} 
Let $\cal C$ be a monoidal category, $(A, \mu , u)$ an algebra in 
$\cal C$ and $T:A\otimes A\rightarrow  
A\otimes A$ a morphism in $\cal C$ such 
that $T\circ (u \otimes id_A)=u\otimes id_A$ and   
$T\circ (id_A\otimes u)=id_A\otimes u$.    
Assume that there exist two morphisms 
$\overline{T}_1, \overline{T}_2:A\otimes A\otimes A  
\rightarrow A\otimes A\otimes A$ in $\cal C$ such that   
\begin{eqnarray}
&&(id_A\otimes \mu )\circ (T\otimes id_A)\circ \overline{T}_1=
T\circ (id_A\otimes \mu ), 
\label{Rmatrix1} \\
&&(\mu \otimes id_A)\circ (id_A\otimes T)\circ \overline{T}_2=
T\circ (\mu \otimes id_A), 
\label{Rmatrix2} \\
&&(T\otimes id_A)\circ \overline{T}_1\circ (id_A\otimes T)= 
(id_A\otimes T)\circ \overline{T}_2\circ (T\otimes id_A). \label{Rmatrix3}
\end{eqnarray}
Then $(A, \mu \circ T, u)$ is also an algebra in ${\cal C}$,   
denoted by $A^T$. The morphism $T$ is called an {\bf $R$-matrix} and  
the two morphisms $\overline{T}_1$, $\overline{T}_2$ are called the 
{\bf companions} of $T$. Obviously, the original concept of $R$-matrix 
is obtained for ${\cal C}$ being the category of $k$-vector spaces and 
$\overline{T}_1=\overline{T}_2=T_{13}$. 
\end{proposition}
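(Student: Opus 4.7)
The plan is to verify the unit axiom and associativity for the new multiplication $\mu_T := \mu \circ T$. The unit axiom is immediate: by the hypotheses $T\circ (u\otimes id_A)=u\otimes id_A$ and $T\circ (id_A\otimes u)=id_A\otimes u$ together with the unit axiom for the original multiplication, we get $\mu_T \circ (u\otimes id_A) = \mu \circ (u\otimes id_A) = id_A$, and similarly on the other side. So the whole content of the proof is associativity.

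To check associativity, I would expand $\mu_T\circ (\mu_T\otimes id_A)$ and $\mu_T\circ (id_A\otimes \mu_T)$ separately. On the left, $\mu_T\circ (\mu_T\otimes id_A) = \mu\circ T\circ (\mu\otimes id_A)\circ (T\otimes id_A)$, so applying (\ref{Rmatrix2}) to rewrite $T\circ (\mu\otimes id_A)$, followed by associativity of $\mu$, yields
\begin{equation*}
\mu_T\circ (\mu_T\otimes id_A) = \mu\circ (id_A\otimes \mu)\circ (id_A\otimes T)\circ \overline{T}_2\circ (T\otimes id_A).
\end{equation*}
On the right, $\mu_T\circ (id_A\otimes \mu_T) = \mu\circ T\circ (id_A\otimes \mu)\circ (id_A\otimes T)$, and applying (\ref{Rmatrix1}) to rewrite $T\circ (id_A\otimes \mu)$ gives
\begin{equation*}
\mu_T\circ (id_A\otimes \mu_T) = \mu\circ (id_A\otimes \mu)\circ (T\otimes id_A)\circ \overline{T}_1\circ (id_A\otimes T).
\end{equation*}
Comparing these two expressions, equality is precisely condition (\ref{Rmatrix3}), precomposed by $\mu\circ (id_A\otimes \mu)$.

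There is no real obstacle here: the axioms (\ref{Rmatrix1})--(\ref{Rmatrix3}) have been chosen exactly so that each of the two multiplications in the associativity relation can be pushed past $T$ at the cost of introducing one of the companions, and the remaining discrepancy between the two sides is resolved by (\ref{Rmatrix3}). This is the categorical analogue of Proposition \ref{borch}, and when ${\cal C}$ is the category of $k$-vector spaces and $\overline{T}_1=\overline{T}_2=T_{13}$, the three conditions (\ref{Rmatrix1})--(\ref{Rmatrix3}) specialize to (\ref{rmat2}), (\ref{rmat1}), (\ref{rmat3}) respectively, recovering exactly the classical notion.
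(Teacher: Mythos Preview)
Your proof is correct and follows exactly the same route as the paper: expand both sides of the associativity relation, apply (\ref{Rmatrix2}) and (\ref{Rmatrix1}) respectively, invoke associativity of $\mu$, and conclude by (\ref{Rmatrix3}) composed on the left with $\mu\circ(id_A\otimes\mu)$. One small slip in your closing remark: (\ref{Rmatrix1}) specializes to (\ref{rmat1}) and (\ref{Rmatrix2}) to (\ref{rmat2}), not the other way around.
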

\begin{proof}
Obviously $u$ is a unit for $(A, \mu \circ T)$;   
we check the associativity of $\mu \circ T$:
\begin{eqnarray*}
(\mu \circ T)\circ ((\mu \circ T)\otimes id_A)&=&(\mu \circ T)\circ  
(\mu \otimes id_A)\circ (T\otimes id_A)\\
&\overset{(\ref{Rmatrix2})}{=}&\mu \circ (\mu \otimes id_A)\circ   
(id_A\otimes T)\circ \overline{T}_2\circ (T\otimes id_A)\\
&\overset{(\ref{Rmatrix3})}{=}&\mu \circ (\mu \otimes id_A)\circ   
(T\otimes id_A)\circ \overline{T}_1\circ (id_A\otimes T)\\
&=&\mu \circ (id_A\otimes \mu )\circ (T\otimes id_A)\circ \overline{T}_1 
\circ (id_A\otimes T)\\
&\overset{(\ref{Rmatrix1})}{=}& \mu \circ T\circ (id_A\otimes \mu )
\circ (id_A\otimes T)\\
&=&(\mu \circ T)\circ (id_A\otimes (\mu \circ T)),
\end{eqnarray*}
finishing the proof.
\end{proof}
\begin{proposition}
Let ${\cal C}$ be a monoidal category, $(A, \mu , u)$ an algebra in 
${\cal C}$ and $T:A\otimes A\rightarrow A\otimes A$ an {\em invertible}  
morphism in ${\cal C}$. Then $T$ is a pseudotwistor if and only if it 
is an $R$-matrix. More precisely, if $T$ is a pseudotwistor with 
companions $\tilde{T}_1$, $\tilde{T}_2$ then $T$ is an $R$-matrix with 
companions $\overline{T}_1=(T^{-1}\otimes id_A)\circ \tilde{T}_1\circ 
(T\otimes id_A)$ and $\overline{T}_2=(id_A\otimes T^{-1})\circ \tilde{T}_2
\circ (id_A\otimes T)$; conversely, if $T$ is an $R$-matrix with 
companions $\overline{T}_1$, $\overline{T}_2$ then $T$ is a pseudotwistor 
with companions $\tilde{T}_1=(T\otimes id_A)\circ \overline{T}_1\circ 
(T^{-1}\otimes id_A)$ and $\tilde{T}_2=(id_A\otimes T)\circ \overline{T}_2
\circ (id_A\otimes T^{-1})$.   
\end{proposition}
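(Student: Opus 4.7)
The plan is to observe that the three pseudotwistor axioms (\ref{catw1})--(\ref{catw3}) and the three $R$-matrix axioms (\ref{Rmatrix1})--(\ref{Rmatrix3}) have exactly the same shape, differing only in whether the ``extra'' copies of $T$ flank the companion on the inside (pseudotwistor) or on the outside ($R$-matrix). When $T$ is invertible these two positions are interchangeable by conjugation, which is precisely what the stated formulas for $\overline{T}_i$ in terms of $\tilde{T}_i$ (and vice versa) encode. Verification of the unit conditions for $\mu\circ T$ is already handled in Proposition \ref{tilda}/the $R$-matrix proposition, so the entire content is matching up the three structural axioms.

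First I would treat the forward direction. Assume $T$ is a pseudotwistor with companions $\tilde{T}_1,\tilde{T}_2$, and set $\overline{T}_1:=(T^{-1}\otimes id_A)\circ \tilde{T}_1\circ (T\otimes id_A)$ and $\overline{T}_2:=(id_A\otimes T^{-1})\circ \tilde{T}_2\circ (id_A\otimes T)$. Then
\[
(T\otimes id_A)\circ \overline{T}_1 = \tilde{T}_1\circ (T\otimes id_A),
\qquad
(id_A\otimes T)\circ \overline{T}_2 = \tilde{T}_2\circ (id_A\otimes T),
\]
so (\ref{Rmatrix1}) and (\ref{Rmatrix2}) follow at once from (\ref{catw1}) and (\ref{catw2}). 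For (\ref{Rmatrix3}), applying these same identities on both sides reduces the desired equality to $\tilde{T}_1\circ(T\otimes id_A)\circ(id_A\otimes T)=\tilde{T}_2\circ(id_A\otimes T)\circ(T\otimes id_A)$, which is (\ref{catw3}).

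For the converse I would proceed symmetrically. Assume $T$ is an $R$-matrix with companions $\overline{T}_1,\overline{T}_2$, and set $\tilde{T}_1:=(T\otimes id_A)\circ \overline{T}_1\circ (T^{-1}\otimes id_A)$ and $\tilde{T}_2:=(id_A\otimes T)\circ \overline{T}_2\circ (id_A\otimes T^{-1})$. Again the identities
\[
\tilde{T}_1\circ (T\otimes id_A) = (T\otimes id_A)\circ \overline{T}_1,
\qquad
\tilde{T}_2\circ (id_A\otimes T) = (id_A\otimes T)\circ \overline{T}_2
\]
turn (\ref{catw1}) and (\ref{catw2}) into (\ref{Rmatrix1}) and (\ref{Rmatrix2}), while (\ref{catw3}) becomes (\ref{Rmatrix3}) after substituting. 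The unit conditions $T\circ(u\otimes id_A)=u\otimes id_A$ and $T\circ(id_A\otimes u)=id_A\otimes u$ are part of the data and are preserved in both directions.

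There is essentially no obstacle here beyond bookkeeping; the only thing worth being careful about is that the two substitutions are mutually inverse, so that starting from a pseudotwistor, passing to the $R$-matrix companions, and passing back recovers the original $\tilde{T}_i$ (and similarly in the other direction). This is immediate from $T\circ T^{-1}=T^{-1}\circ T=id_{A\otimes A}$ applied on the appropriate tensor factor, and justifies calling the two structures equivalent when $T$ is invertible.
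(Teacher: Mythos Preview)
Your proof is correct and is exactly the straightforward computation the paper has in mind; the paper simply writes ``Straightforward computation'' and leaves the details you have spelled out to the reader. The key observation you made---that the definitions of $\overline{T}_i$ in terms of $\tilde{T}_i$ (and vice versa) are precisely the conjugations needed to swap $(T\otimes id_A)\circ\overline{T}_1$ with $\tilde{T}_1\circ(T\otimes id_A)$, etc.---is the entire content of the argument.
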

\begin{proof}
Straightforward computation.
\end{proof}
\begin{corollary}
Let ${\cal C}$ be a monoidal category and $T$ a laycle on ${\cal C}$. 
If $(A, \mu , u)$ is an algebra in ${\cal C}$, then $T_{A, A}$ is an 
$R$-matrix for $A$, with companions $\overline{T}_1:=T^f_{A, A, A}$ 
and $\overline{T}_2:=T^b_{A, A, A}$, where $T^b$ and $T^f$ are the 
families defined by (\ref{ela}) and (\ref{fla}). 
\end{corollary}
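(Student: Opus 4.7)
The plan is to derive this corollary immediately from two previously established results: Proposition~\ref{layclepseudo}, which says $T_{A,A}$ is a pseudotwistor with companions $\tilde{T}_1 = T^b_{A,A,A}$ and $\tilde{T}_2 = T^f_{A,A,A}$, together with the preceding Proposition giving the translation between invertible pseudotwistors and $R$-matrices. Since $T$ is a laycle, each $T_{X,Y}$ is by definition a natural \emph{isomorphism}, so $T_{A,A}$ is invertible and the translation applies.

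First I would invoke the invertible pseudotwistor/$R$-matrix correspondence, which asserts that an invertible pseudotwistor $T$ with companions $\tilde{T}_1,\tilde{T}_2$ is an $R$-matrix whose companions are
\begin{eqnarray*}
\overline{T}_1 &=& (T_{A,A}^{-1}\otimes id_A)\circ \tilde{T}_1\circ (T_{A,A}\otimes id_A),\\
\overline{T}_2 &=& (id_A\otimes T_{A,A}^{-1})\circ \tilde{T}_2\circ (id_A\otimes T_{A,A}).
\end{eqnarray*}
Substituting $\tilde{T}_1 = T^b_{A,A,A}$ and $\tilde{T}_2 = T^f_{A,A,A}$ reduces the corollary to the two identities $\overline{T}_1 = T^f_{A,A,A}$ and $\overline{T}_2 = T^b_{A,A,A}$.

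These are one-line checks using the two expressions (\ref{ela}) and (\ref{fla}) for $T^b$ and $T^f$. Indeed, writing $T^b_{A,A,A} = T_{A,A\otimes A}\circ (T_{A,A}^{-1}\otimes id_A)$, the two factors $T_{A,A}^{-1}\otimes id_A$ and $T_{A,A}\otimes id_A$ in the formula for $\overline{T}_1$ cancel the rightmost $T_{A,A}^{-1}\otimes id_A$, leaving $(T_{A,A}^{-1}\otimes id_A)\circ T_{A,A\otimes A} = T^f_{A,A,A}$ by the second equality in (\ref{fla}). The computation for $\overline{T}_2$ is symmetric: write $T^f_{A,A,A} = T_{A\otimes A,A}\circ (id_A\otimes T_{A,A}^{-1})$, cancel the rightmost $id_A\otimes T_{A,A}^{-1}$ with $id_A\otimes T_{A,A}$, and recognize $(id_A\otimes T_{A,A}^{-1})\circ T_{A\otimes A,A} = T^b_{A,A,A}$ by the first equality in (\ref{ela}).

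There is no real obstacle; the only thing to notice is the pleasant combinatorial swap that the roles of $T^b$ and $T^f$ as companions get interchanged when passing from the pseudotwistor description to the $R$-matrix description. This is already implicit in the fact that (\ref{catw1})--(\ref{catw3}) and (\ref{Rmatrix1})--(\ref{Rmatrix3}) differ by conjugating the companions with $T$ on opposite sides, while the two expressions for each of $T^b$ and $T^f$ in (\ref{ela}), (\ref{fla}) provide exactly the cancellations needed to carry out the conjugation.
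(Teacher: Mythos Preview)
Your proof is correct and follows exactly the route the paper intends: the corollary is stated without proof, as an immediate consequence of Proposition~\ref{layclepseudo} and the preceding proposition translating invertible pseudotwistors into $R$-matrices. Your verification that the conjugation formulae interchange $T^b$ and $T^f$ via the two alternative expressions in (\ref{ela}) and (\ref{fla}) is precisely the intended (trivial) check.
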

\section{A characterization of generalized double braidings}
\setcounter{equation}{0}
${\;\;\;\;\;}$
Let ${\cal C}$ be a monoidal category and $A$ an algebra in ${\cal C}$. 
If $T$ is a pseudotwistor for $A$ and $R:A\otimes A\rightarrow A\otimes A$ 
is an invertible twisting map such that the companions of $T$ are given 
by the formulae 
\begin{eqnarray}
&&\tilde{T}_1=(R^{-1}\otimes id_A)\circ (id_A\otimes T)\circ 
(R\otimes id_A),  \label{ttilda1} \\
&&\tilde{T}_2=(id_A\otimes R^{-1})\circ (T\otimes id_A)\circ 
(id_A\otimes R), \label{ttilda2} 
\end{eqnarray}
then, by \cite{lpvo}, Theorem 6.6, it follows that $R\circ T$ is a 
twisting map between $A$ and itself. This result has the following 
categorical analogue, with laycles replacing pseudotwistors and braidings 
replacing twisting maps: 
\begin{theorem} \label{analog}
Let ${\cal C}$ be a monoidal category, $T$ a laycle and $d$ a 
braiding on ${\cal C}$, such that for all $X, Y, Z\in {\cal C}$ the 
following relations hold:
\begin{eqnarray}
&&T_{X\otimes Y, Z}=(id_X\otimes T_{Y, Z})\circ (d_{X, Y}^{-1}\otimes id_Z)
\circ (id_Y\otimes T_{X, Z})\circ (d_{X, Y}\otimes id_Z), \label{comp1} \\
&&T_{X, Y\otimes Z}=(T_{X, Y}\otimes id_Z)\circ (id_X\otimes d_{Y, Z}^{-1})
\circ (T_{X, Z}\otimes id_Y)\circ (id_X\otimes d_{Y, Z}). \label{comp2} 
\end{eqnarray}
Then the families $d'_{X, Y}:= d_{X, Y}\circ T_{X, Y}$ and 
$d''_{X, Y}:=T_{Y, X}\circ d_{X, Y}$ are also braidings on ${\cal C}$. 
Moreover, $T$ is a twine and $d''_{X, Y}=T_{Y, X}\circ d'_{X, Y}
\circ T_{X, Y}^{-1}$ (thus $({\cal C}, d')$ and $({\cal C}, d'')$ are 
braided isomorphic).    
\end{theorem}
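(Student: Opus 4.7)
The plan is to first extract clean closed forms for the auxiliary families $T^b$ and $T^f$ attached to the laycle $T$, then use these forms to reduce the hexagon axioms for $d'$ to tautologies, and finally deduce the remaining assertions from results already established in the paper. First I would substitute (\ref{comp1}) into the first expression $T^b_{X,Y,Z}=(id_X\otimes T_{Y,Z}^{-1})\circ T_{X\otimes Y,Z}$ from (\ref{ela}); the factors $(id_X\otimes T_{Y,Z}^{-1})\circ (id_X\otimes T_{Y,Z})$ cancel, leaving
\begin{eqnarray*}
T^b_{X,Y,Z}=(d_{X,Y}^{-1}\otimes id_Z)\circ (id_Y\otimes T_{X,Z})\circ (d_{X,Y}\otimes id_Z).
\end{eqnarray*}
Symmetrically, substituting (\ref{comp2}) into the second expression $T^f_{X,Y,Z}=(T_{X,Y}^{-1}\otimes id_Z)\circ T_{X,Y\otimes Z}$ from (\ref{fla}) and cancelling $(T_{X,Y}^{-1}\otimes id_Z)\circ (T_{X,Y}\otimes id_Z)$ yields
\begin{eqnarray*}
T^f_{X,Y,Z}=(id_X\otimes d_{Y,Z}^{-1})\circ (T_{X,Z}\otimes id_Y)\circ (id_X\otimes d_{Y,Z}).
\end{eqnarray*}

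To verify (\ref{braid1}) for $d'$ I would expand $d'_{X,Y\otimes Z}=d_{X,Y\otimes Z}\circ T_{X,Y\otimes Z}$ using (\ref{braid1}) for $d$, expand $(id_Y\otimes d'_{X,Z})\circ (d'_{X,Y}\otimes id_Z)$ by the definition of $d'$, and cancel $(id_Y\otimes d_{X,Z})$ from the left; the hexagon then reduces to
\begin{eqnarray*}
(d_{X,Y}\otimes id_Z)\circ T_{X,Y\otimes Z}\circ (T_{X,Y}^{-1}\otimes id_Z)\circ (d_{X,Y}^{-1}\otimes id_Z)=id_Y\otimes T_{X,Z}.
\end{eqnarray*}
Recognising $T_{X,Y\otimes Z}\circ (T_{X,Y}^{-1}\otimes id_Z)$ as $T^b_{X,Y,Z}$ (the second equality in (\ref{ela})) and inserting the clean form above turns this into a tautology. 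The hexagon (\ref{braid2}) for $d'$ collapses in exactly the same manner, using (\ref{braid2}) for $d$, the first equality in (\ref{fla}), and the clean form for $T^f$.

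For the remaining claims, a direct cancellation gives
\begin{eqnarray*}
T_{Y,X}\circ d'_{X,Y}\circ T_{X,Y}^{-1}=T_{Y,X}\circ d_{X,Y}\circ T_{X,Y}\circ T_{X,Y}^{-1}=T_{Y,X}\circ d_{X,Y}=d''_{X,Y},
\end{eqnarray*}
which is the final identity in the statement; Proposition \ref{conjug} applied to the laycle $T$ and the braiding $d'$ then shows that $d''=(d')^T$ is a braiding. For the twine property of $T$, the family $\overline{d}_{U,V}:=d_{V,U}^{-1}$ is itself a braiding (its two hexagons follow by inverting those of $d$), and $T_{X,Y}=d_{X,Y}^{-1}\circ d'_{X,Y}=\overline{d}_{Y,X}\circ d'_{X,Y}$ exhibits $T$ as a generalized double braiding of $d'$ and $\overline{d}$, hence as a twine by the Brugui\`eres proposition recalled in the preliminaries. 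The braided monoidal isomorphism $({\cal C},d')\simeq ({\cal C},d'')$ is then immediate from Proposition \ref{equival}. The only real technical obstacle is the preliminary simplification of $T^b$ and $T^f$; once these forms are in hand the two hexagons for $d'$ reduce to conjugation tautologies and the remaining statements are essentially formal.
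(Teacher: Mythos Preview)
Your argument is correct and, for the hexagon verification of $d'$ and the deduction of $d''$ via Proposition~\ref{conjug}, it is essentially identical to the paper's proof: the paper records your ``clean forms'' for $T^b$ and $T^f$ as the observation that (\ref{comp1}) and (\ref{comp2}) are analogues of (\ref{ttilda1}) and (\ref{ttilda2}), and then derives the equivalent relations (\ref{conseqcomp1}) and (\ref{conseqcomp2}) (which are exactly $T_{X,Y\otimes Z}=T^b_{X,Y,Z}\circ (T_{X,Y}\otimes id_Z)$ and $T_{X\otimes Y,Z}=T^f_{X,Y,Z}\circ (id_X\otimes T_{Y,Z})$ with your clean forms substituted in) before plugging them into the hexagons.

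The one genuine difference is the twine claim. The paper verifies (\ref{twine3}) directly from (\ref{conseqcomp1}) and (\ref{conseqcomp2}), giving a self-contained argument; you instead exhibit $T_{X,Y}=\overline d_{Y,X}\circ d'_{X,Y}$ as a generalized double braiding and invoke the Brugui\`eres proposition from the preliminaries. Your route is shorter and conceptually clean, but note that it relies on the cited result, whereas the paper's direct verification is what allows the authors (in the remark following Proposition~\ref{consteor}) to present Theorem~\ref{analog} together with Proposition~\ref{consteor} as an \emph{alternative} proof of that very Brugui\`eres fact. So your argument proves the theorem, but it would render that subsequent remark circular.
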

\begin{proof}
Note first that (\ref{comp1}) and (\ref{comp2}) are the analogues of 
(\ref{ttilda1}) and (\ref{ttilda2}), because they are respectively 
equivalent to 
\begin{eqnarray*}
&&T^b_{X, Y, Z}=(d_{X, Y}^{-1}\otimes id_Z)\circ (id_Y\otimes T_{X, Z})\circ 
(d_{X, Y}\otimes id_Z), \\
&&T^f_{X, Y, Z}=(id_X\otimes d_{Y, Z}^{-1})\circ (T_{X, Z}\otimes id_Y)\circ 
(id_X\otimes d_{Y, Z}). 
\end{eqnarray*}
Also, as consequences of (\ref{c1}), (\ref{comp1}) and (\ref{comp2}) we 
obtain the following relations:
\begin{eqnarray}
&&T_{X, Y\otimes Z}
=(d_{X, Y}^{-1}\otimes id_Z)\circ (id_Y\otimes T_{X, Z})\circ 
(d_{X, Y}\otimes id_Z)\circ (T_{X, Y}\otimes id_Z), \label{conseqcomp1}\\ 
&&T_{X\otimes Y, Z}
=(id_X\otimes d_{Y, Z}^{-1})\circ (T_{X, Z}\otimes id_Y)\circ 
(id_X\otimes d_{Y, Z})\circ (id_X\otimes T_{Y, Z}). 
\label{conseqcomp2}
\end{eqnarray}
Now we check (\ref{braid1}) and (\ref{braid2}) for $d'$:
\begin{eqnarray*}
d'_{X, Y\otimes Z}&=&d_{X, Y\otimes Z}\circ T_{X, Y\otimes Z}\\
&\overset{(\ref{braid1}),\; (\ref{comp2})}{=}&
(id_Y\otimes d_{X, Z})\circ (d_{X, Y}\otimes id_Z)\circ 
(T_{X, Y}\otimes id_Z)\circ (id_X\otimes d_{Y, Z}^{-1})\\
&&\circ (T_{X, Z}\otimes id_Y)\circ (id_X\otimes d_{Y, Z})\\
&\overset{(\ref{conseqcomp1})}{=}&
(id_Y\otimes d_{X, Z})\circ (id_Y\otimes T_{X, Z})\circ 
(d_{X, Y}\otimes id_Z)\circ (T_{X, Y}\otimes id_Z)\\
&=&(id_Y\otimes d'_{X, Z})\circ (d'_{X, Y}\otimes id_Z), 
\end{eqnarray*}
\begin{eqnarray*}
d'_{X\otimes Y, Z}&=&d_{X\otimes Y, Z}\circ T_{X\otimes Y, Z}\\
&\overset{(\ref{braid2}),\; (\ref{comp1})}{=}&
(d_{X, Z}\otimes id_Y)\circ (id_X\otimes d_{Y, Z})\circ 
(id_X\otimes T_{Y, Z})\circ (d_{X, Y}^{-1}\otimes id_Z)\\
&&\circ (id_Y\otimes T_{X, Z})\circ (d_{X, Y}\otimes id_Z)\\
&\overset{(\ref{conseqcomp2})}{=}&
(d_{X, Z}\otimes id_Y)\circ (T_{X, Z}\otimes id_Y)\circ 
(id_X\otimes d_{Y, Z})\circ (id_X\otimes T_{Y, Z})\\
&=&(d'_{X, Z}\otimes id_Y)\circ (id_X\otimes d'_{Y, Z}). 
\end{eqnarray*}
Thus, $d'$ is a braiding. It is obvious that 
$d''_{X, Y}=T_{Y, X}\circ d'_{X, Y}\circ T_{X, Y}^{-1}$, 
and it follows that            
$d''$ is also a braiding, by using Proposition \ref{conjug}. The fact that  
$T$ satisfies (\ref{twine3}) follows immediately by  
using (\ref{conseqcomp1}) and (\ref{conseqcomp2}). 
\end{proof}
\begin{corollary}\label{QT}
Let $H$ be a Hopf algebra, $R\in H\otimes H$ a quasitriangular structure and 
$F\in H\otimes H$ a lazy twist, such that 
\begin{eqnarray}
&&(\Delta \otimes id)(F)=F_{23}R_{12}^{-1}F_{13}R_{12}, \label{compQT1} \\
&&(id\otimes \Delta)(F)=F_{12}R_{23}^{-1}F_{13}R_{23}. \label{compQT2}
\end{eqnarray} 
Then the elements $R'=RF$ and $R''=F_{21}R$ are also quasitriangular 
structures on $H$.
\end{corollary}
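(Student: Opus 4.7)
The plan is to recognize this as the Hopf-algebraic translation of Theorem \ref{analog}, applied to the braided category $\mathcal{C} = {}_H\mathcal{M}$ equipped with the braiding $d_{M,N}(m\otimes n) = R^2 n \otimes R^1 m$ coming from $R$, and the laycle $T_{M,N}(m\otimes n) = F^1 m \otimes F^2 n$ coming from $F$ (both constructions are standard, and the latter is recorded in the example in Section \ref{seclaycles}).

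The heart of the argument is to check that the hypotheses (\ref{compQT1}) and (\ref{compQT2}) are precisely the Hopf-algebraic shadow of the compatibility conditions (\ref{comp1}) and (\ref{comp2}) required by Theorem \ref{analog}. I would verify this by evaluating both sides on $1 \otimes 1 \otimes 1 \in H \otimes H \otimes H$, regarded as the triple tensor of the regular left module (on which $H$ acts via $h \cdot (x \otimes y) = h_1 x \otimes h_2 y$). The left-hand sides of (\ref{comp1}) and (\ref{comp2}) immediately yield $(\Delta \otimes id)(F)$ and $(id \otimes \Delta)(F)$, while the right-hand sides, after tracking Sweedler components through each of the four composed maps, produce the elements $F_{23} R_{12}^{-1} F_{13} R_{12}$ and $F_{12} R_{23}^{-1} F_{13} R_{23}$ respectively. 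Since both sides of (\ref{comp1}) and (\ref{comp2}) are $H$-linear maps built from universal data on the regular module, this pointwise check on $1 \otimes 1 \otimes 1$ is equivalent, by naturality, to the identities on arbitrary triples $X, Y, Z$ of modules.

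Once the translation is in place, Theorem \ref{analog} guarantees that the families $d'_{M,N} := d_{M,N} \circ T_{M,N}$ and $d''_{M,N} := T_{N,M} \circ d_{M,N}$ are braidings on $\mathcal{C}$. A direct unwinding gives $d'_{M,N}(m\otimes n) = R^2 F^2 n \otimes R^1 F^1 m$ and $d''_{M,N}(m\otimes n) = F^1 R^2 n \otimes F^2 R^1 m$, which are precisely the braidings induced by $R' = RF$ and $R'' = F_{21} R$ respectively. Invoking the standard bijection between quasitriangular structures on $H$ and braidings on ${}_H\mathcal{M}$ of this shape, we conclude that $R'$ and $R''$ are quasitriangular structures on $H$.

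The only genuine obstacle is the bookkeeping in the translation step: one has to carefully track which copy of $F$ (respectively $R$ or $R^{-1}$) lands in which tensor slot after each application of $d_{X,Y}^{-1}$, $T_{X,Z}$, and so on, and remember that the action on $X \otimes Y$ splits the first $F$ via $\Delta$. Once that calculation is performed cleanly, the corollary is an immediate application of Theorem \ref{analog}.
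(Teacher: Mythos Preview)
Your proposal is correct and follows exactly the approach the paper intends: the corollary is stated immediately after Theorem~\ref{analog} with no separate proof, precisely because it is the Hopf-algebraic translation of that theorem via the standard dictionary between quasitriangular structures (resp.\ lazy twists) on $H$ and braidings (resp.\ laycles) on $_H\mathcal{M}$. Your bookkeeping checks out---in particular, the right-hand sides of (\ref{comp1}) and (\ref{comp2}) do unwind to $F_{23}R_{12}^{-1}F_{13}R_{12}$ and $F_{12}R_{23}^{-1}F_{13}R_{23}$ respectively, and $d''$ is indeed the braiding induced by $F_{21}R$.
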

\begin{proposition} \label{consteor} 
Let ${\cal C}$ be a monoidal category and $c$, $c'$ braidings on ${\cal C}$.  
Then the inverse braiding $d_{X, Y}:=c_{Y, X}^{-1}$ and the  
laycle $T_{X, Y}=c'_{Y, X}\circ c_{X, Y}$ satisfy the hypotheses of 
Theorem \ref{analog}. Consequently, the family 
$d'_{X, Y}=d_{X, Y}\circ T_{X, Y}=c_{Y, X}^{-1}\circ c'_{Y, X}\circ 
c_{X, Y}$ is a braiding on ${\cal C}$, and the braiding   
$d''$ coincides with the original braiding $c'$.
\end{proposition}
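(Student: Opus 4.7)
The plan is to verify the two compatibility conditions (\ref{comp1}) and (\ref{comp2}) of Theorem \ref{analog} for the given $d$ and $T$, and then to read off the identifications $d'$ and $d''$ from the definitions.

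First I would check that the inputs fit the hypotheses of the theorem. The family $d_{X,Y}=c_{Y,X}^{-1}$ is a braiding: applying the inverse to the hexagons (\ref{braid1})--(\ref{braid2}) satisfied by $c$ yields the corresponding hexagons for $d$. The family $T_{X,Y}=c'_{Y,X}\circ c_{X,Y}$ is a generalized double braiding, hence a twine (and in particular a laycle) by the proposition on generalized double braidings recalled in the Preliminaries.

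The core step is condition (\ref{comp1}); condition (\ref{comp2}) is entirely analogous. Using $d_{X,Y}^{-1}=c_{Y,X}$ and expanding $T$, the right-hand side of (\ref{comp1}) becomes
\begin{eqnarray*}
&&(id_X\otimes c'_{Z,Y})\circ (id_X\otimes c_{Y,Z})\circ (c_{Y,X}\otimes id_Z)\\
&&\circ (id_Y\otimes c'_{Z,X})\circ (id_Y\otimes c_{X,Z})\circ (c_{Y,X}^{-1}\otimes id_Z).
\end{eqnarray*}
I would then exploit the naturality of $c$ (applied to the morphism $c'_{Z,X}\colon Z\otimes X\to X\otimes Z$) together with (\ref{braid2}), which gives
\begin{eqnarray*}
(id_X\otimes c_{Y,Z})\circ (c_{Y,X}\otimes id_Z)\circ (id_Y\otimes c'_{Z,X})
=(c'_{Z,X}\otimes id_Y)\circ (id_Z\otimes c_{Y,X})\circ (c_{Y,Z}\otimes id_X).
\end{eqnarray*}
Substituting this, the middle block becomes
$(id_Z\otimes c_{Y,X})\circ (c_{Y,Z}\otimes id_X)\circ (id_Y\otimes c_{X,Z})\circ (c_{Y,X}^{-1}\otimes id_Z)$,
which by the braid equation (\ref{braideq}) for $c$ collapses to $(c_{X,Z}\otimes id_Y)\circ (id_X\otimes c_{Y,Z})$. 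Reassembling and applying (\ref{braid1}) to $c'$ and (\ref{braid2}) to $c$, the right-hand side becomes $c'_{Z,X\otimes Y}\circ c_{X\otimes Y,Z}=T_{X\otimes Y,Z}$, as required. Condition (\ref{comp2}) is verified symmetrically, using (\ref{braid1}) for $c$, (\ref{braid2}) for $c'$, and the naturality of $c$ with respect to $c'_{Y,Z}$.

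With (\ref{comp1}) and (\ref{comp2}) established, Theorem \ref{analog} supplies the two braidings. The identification is then immediate: $d'_{X,Y}=d_{X,Y}\circ T_{X,Y}=c_{Y,X}^{-1}\circ c'_{Y,X}\circ c_{X,Y}$, and
\begin{eqnarray*}
d''_{X,Y}=T_{Y,X}\circ d_{X,Y}=c'_{X,Y}\circ c_{Y,X}\circ c_{Y,X}^{-1}=c'_{X,Y}.
\end{eqnarray*}
The main obstacle is purely bookkeeping: tracking the six-factor composite on the right of (\ref{comp1}) and recognising, at the right place, the single application of naturality of $c$ on $c'_{Z,X}$ followed by the braid relation for $c$. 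No new ideas are required beyond the hexagon axioms, naturality, and (\ref{braideq}).
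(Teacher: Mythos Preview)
Your proof is correct and follows essentially the same route as the paper's: expand the right-hand side of (\ref{comp1}) into the six-factor composite, pull $c'_{Z,X}$ across using naturality of $c$ together with a hexagon, collapse the remaining $c$-block via the braid relation (\ref{braideq}), and recombine using the hexagons for $c'$ and $c$. One small point: the hexagon you invoke to rewrite $(id_X\otimes c_{Y,Z})\circ (c_{Y,X}\otimes id_Z)$ as $c_{Y,X\otimes Z}$ is (\ref{braid1}), not (\ref{braid2}); the paper cites (\ref{braid1}) at that step, and your displayed identity is exactly what (\ref{braid1}) plus naturality gives.
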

\begin{proof}  
We check (\ref{comp1}):\\[2mm]
${\;\;\;\;\;}$$(id_X\otimes T_{Y, Z})\circ (d_{X, Y}^{-1}\otimes id_Z)
\circ (id_Y\otimes T_{X, Z})\circ (d_{X, Y}\otimes id_Z)$
\begin{eqnarray*}
&=&(id_X\otimes c'_{Z, Y})\circ (id_X\otimes c_{Y, Z})\circ 
(c_{Y, X}\otimes id_Z)\circ (id_Y\otimes c'_{Z, X})\\
&&\circ (id_Y\otimes c_{X, Z})\circ (c_{Y, X}^{-1}\otimes id_Z)\\
&\overset{(\ref{braid1}),\; naturality \;of\;c}{=}& 
(id_X\otimes c'_{Z, Y})\circ (c'_{Z, X}\otimes id_Y)\circ  
(id_Z\otimes c_{Y, X})\circ (c_{Y, Z}\otimes id_X)\\
&&\circ (id_Y\otimes c_{X, Z})\circ (c_{Y, X}^{-1}\otimes id_Z)\\
&\overset{(\ref{braideq})}{=}& 
(id_X\otimes c'_{Z, Y})\circ (c'_{Z, X}\otimes id_Y)\circ  
(c_{X, Z}\otimes id_Y)\circ (id_X\otimes c_{Y, Z})\\
&&\circ (c_{Y, X}\otimes id_Z)\circ (c_{Y, X}^{-1}\otimes id_Z)\\
&\overset{(\ref{braid1}),\; (\ref{braid2})}{=}&
c'_{Z, X\otimes Y}\circ c_{X\otimes Y, Z}\\
&=&T_{X\otimes Y, Z}.
\end{eqnarray*}
The proof of (\ref{comp2}) is similar and left to the reader.
\end{proof}
\begin{remark}
Theorem \ref{analog} together with Proposition \ref{consteor} 
provide an alternative proof of the fact from \cite{brug} that the   
laycle $T_{X, Y}=c'_{Y, X}\circ c_{X, Y}$ is a twine. 
\end{remark}
\begin{remark}
If $({\cal C}, c)$ is a braided monoidal category and we take the inverse 
braiding $d_{X, Y}=c_{Y, X}^{-1}$, then in general $({\cal C}, c)$ and 
$({\cal C}, d)$ are not braided isomorphic. Thus, the braidings $d'$ and 
$d''$ obtained in Theorem \ref{analog} are in general not equivalent 
to the original braiding $d$. 
\end{remark}

Theorem \ref{analog} together with Proposition \ref{consteor} provide 
the following characterization of generalized double braidings: 
\begin{proposition}
Let ${\cal C}$ be a monoidal category and $T$ a laycle on ${\cal C}$. Then 
$T$ is a generalized double braiding if and only if there exists a braiding 
$d$ on ${\cal C}$ such that (\ref{comp1}) and (\ref{comp2}) hold. 
\end{proposition}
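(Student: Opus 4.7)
The plan is to derive both directions of this equivalence directly from Theorem~\ref{analog} and Proposition~\ref{consteor}, which together already do almost all the work. The claim is essentially a ``packaging'' result: (\ref{comp1})--(\ref{comp2}) provide exactly the structural data needed to reconstruct two braidings whose composition (in the appropriate sense) is $T$.

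For the ``only if'' direction, suppose $T$ is a generalized double braiding, so that $T_{X,Y}=c'_{Y,X}\circ c_{X,Y}$ for some braidings $c,c'$ on ${\cal C}$. Define $d_{X,Y}:=c_{Y,X}^{-1}$; this is again a braiding (the inverse braiding of $c$). Then Proposition~\ref{consteor} says exactly that this pair $(T,d)$ satisfies the hypotheses (\ref{comp1}) and (\ref{comp2}) of Theorem~\ref{analog}. So a suitable braiding $d$ exists.

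For the ``if'' direction, assume there exists a braiding $d$ on ${\cal C}$ such that (\ref{comp1}) and (\ref{comp2}) hold. Apply Theorem~\ref{analog}: we obtain that $d'_{X,Y}:=d_{X,Y}\circ T_{X,Y}$ is a braiding on ${\cal C}$. Now define two braidings $c,c'$ on ${\cal C}$ by setting $c:=d'$ and $c'_{X,Y}:=d_{Y,X}^{-1}$ (again the inverse braiding of $d$). A direct computation gives
\begin{eqnarray*}
c'_{Y,X}\circ c_{X,Y}=d_{X,Y}^{-1}\circ d'_{X,Y}=d_{X,Y}^{-1}\circ d_{X,Y}\circ T_{X,Y}=T_{X,Y},
\end{eqnarray*}
so $T$ is a generalized double braiding.

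No step should present a serious obstacle: the nontrivial content has been pushed into Theorem~\ref{analog} (which supplies the braiding $d'$) and Proposition~\ref{consteor} (which supplies the compatibility conditions from a generalized double braiding). The only small point to verify is that the inverse braiding $d_{Y,X}^{-1}$ of a braiding $d$ is indeed a braiding, which is standard and is already mentioned in the paper just before the statement.
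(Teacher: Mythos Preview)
Your proof is correct and follows exactly the approach intended by the paper, which states the proposition as an immediate consequence of Theorem~\ref{analog} and Proposition~\ref{consteor} without spelling out the details. You have simply made explicit the two short arguments: Proposition~\ref{consteor} gives the ``only if'' direction via $d_{X,Y}=c_{Y,X}^{-1}$, and Theorem~\ref{analog} gives the ``if'' direction by producing the braiding $d'$ from which $T$ is recovered as $c'_{Y,X}\circ c_{X,Y}$ with $c=d'$ and $c'_{X,Y}=d_{Y,X}^{-1}$.
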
 

\end{document}